\documentclass{birkmult}
\usepackage[T1]{fontenc}
\usepackage[latin1]{inputenc}
\setlength{\parskip}{\medskipamount}
\setlength{\parindent}{0pt}
\usepackage{color}
\usepackage[english]{babel}
\usepackage{verbatim}
\usepackage{amsthm}
\usepackage{amsmath}
\usepackage{amssymb}
\usepackage[unicode=true,pdfusetitle,
 bookmarks=true,bookmarksnumbered=false,bookmarksopen=false,
 breaklinks=false,pdfborder={0 0 1},backref=false,colorlinks=false]
 {hyperref}

\makeatletter

\pdfpageheight\paperheight
\pdfpagewidth\paperwidth

\numberwithin{equation}{section}
\numberwithin{figure}{section}
\theoremstyle{plain}
\newtheorem{thm}{\protect\theoremname}[section]
  \theoremstyle{plain}
  \newtheorem{cor}[thm]{\protect\corollaryname}
  \theoremstyle{definition}
  \newtheorem{example}[thm]{\protect\examplename}
  \theoremstyle{definition}
  \newtheorem{defn}[thm]{\protect\definitionname}
  \theoremstyle{remark}
  \newtheorem{rem}[thm]{\protect\remarkname}
  \theoremstyle{plain}
  \newtheorem{prop}[thm]{\protect\propositionname}

\usepackage[T1]{fontenc}    

\usepackage{a4wide}        
\addtolength{\headheight}{2pt}
\usepackage{amsmath}
\usepackage{enumerate}
\usepackage{euscript}
\usepackage{mathtools}
\allowdisplaybreaks[1] 
\usepackage{amsfonts}




\newcommand*{\trace}{\operatorname{trace}}

\newcommand*{\dive}{\operatorname{div}}

\newcommand*{\curl}{\operatorname{curl}}

\newcommand*{\Grad}{\operatorname{Grad}}
\newcommand*{\Dive}{\operatorname{Div}}

\newcommand*{\grad}{\operatorname{grad}}

\newcommand*{\ii}{\mathrm{i}}
\DeclareMathOperator*{\esssup}{ess-sup}


\newcommand{\1}{{\chi}}
\newcommand{\X}{\1}

\DeclareMathAccent{\Circ}{\mathalpha}{operators}{"17}

\renewcommand{\Im}{\operatorname{\mathfrak{Im}}}
\renewcommand{\Re}{\operatorname{\mathfrak{Re}}}

\newcommand{\oi}[2]{\left]#1,#2 \right[}

\newcommand{\lci}[2]{\left[#1,#2 \right[}


\renewcommand{\hat}{\widehat}
\renewcommand{\tilde}{\widetilde}
\renewcommand*{\epsilon}{\varepsilon}
\renewcommand*{\theta}{\vartheta}
\renewcommand*{\rho}{\varrho}

\arraycolsep2pt

\theoremstyle{definition}
\newtheorem{hyp}[thm]{Hypotheses}

\makeatother

  \providecommand{\corollaryname}{Corollary}
  \providecommand{\definitionname}{Definition}
  \providecommand{\examplename}{Example}
  \providecommand{\propositionname}{Proposition}
  \providecommand{\remarkname}{Remark}
\providecommand{\theoremname}{Theorem}

\begin{document}

\title{Well-posedness via Monotonicity -- An Overview.}
\author{Rainer Picard}

\email{rainer.picard@tu-dresden.de}

\author{Sascha Trostorff}
\email{sascha.trostorff@tu-dresden.de}

\author{Marcus Waurick}

\email{marcus.waurick@tu-dresden.de}

\address{%
Institut f\"ur Analysis\\ Fachrichtung Mathematik\\
Technische Universit\"at Dresden\\
Zellescher Weg 12-14\\
D-01062 Dresden\\
Germany}
\subjclass{3502,35D30,35F16,35F25,35F61,35M33,35Q61,\\
35Q79,35Q74,74Q05,74Q10,74C10,74B05,78M40}

\keywords{positive definiteness, monotonicity, material laws, coupled systems, multiphysics}

\date{\today}

\begin{abstract}
The idea of monotonicity is shown to be the central theme of the solution theories associated with problems of mathematical physics. A ``grand unified'' setting is surveyed covering a comprehensive class of such problems. We illustrate on the applicability of this setting with a number examples. A brief discussion of stability and homogenization issues within this framework is also included.
\end{abstract}

\maketitle









%


\setcounter{section}{-1}
\newpage

\section{Introduction}

In this paper we shall survey a particular class of problems, which
we like to refer to as ``evolutionary equations'' (to distinguish
it from the class of explicit first order ordinary differential equations
with operator coefficients predominantly considered under the heading
of \emph{evolution equations}). This problem class is spacious enough
to include not only classical evolution equations but also partial
differential algebraic systems, functional differential equations
and integro-differential equations. Indeed, by thinking of elliptic
systems as time dependent, for example as constant with respect to
time on the connected components of $\mathbb{R}\setminus\left\{ 0\right\} $,
they also can be embedded into this class. The setting is -- in its
present state -- largely limited to a Hilbert space framework. As
a matter of convenience the discussion will indeed be set in a complex
Hilbert space framework. For the concept of monotonicity it is, however,
more appropriate to consider complex Hilbert spaces as real Hilbert
spaces, which can canonically be achieved by reducing scalar multiplication
to real numbers and replacing the inner product by its real part.
So, a binary relation $R$ in a complex Hilbert space $H$ with inner
product $\left\langle \,\cdot\,|\,\cdot\,\right\rangle _{H}$ would
be called strictly monotone if 
\[
\Re\left\langle x-y|u-v\right\rangle _{H}\geq\gamma\left\langle x-y|x-y\right\rangle _{H}
\]
for all $\left(x,u\right),\left(y,v\right)\in R$ holds and $\gamma$
is some positive real number. In case $\gamma=0$ the relation $R$
would be called monotone.

The importance of strict monotonicity, which in the linear operator
case reduces to strict positive definiteness%
\footnote{We use the term \emph{strict positive definiteness} for a linear operator
$A$ in a real or complex Hilbert space $X$ in the sense naturally
induced by the classification of the corresponding quadratic form
$Q_{A}$ given by $u\mapsto\left\langle u|Au\right\rangle _{X}$ on
its domain $D\left(A\right)$. So, if $Q_{A}$ is non-negative (mostly
called positive semi-definite), positive definite, strictly positive
definite, then the operator $A$ will be called non-negative (usually
called positive), positive definite, strictly positive definite, respectively.
If $X$ is a complex Hilbert space it follows that $A$ must be Hermitean.
Note that we do \emph{not} restrict the definition of non-negativity,
positive definiteness, strict positive definiteness to Hermitean or
symmetric linear operators.%
}, is of course well-known from the elliptic case. By a suitable choice
of space-time norm this key to solving elliptic partial differential
equation problems also allows to establish well-posedness for dynamic
problems in exactly the same fashion.

The crucial point for this extension is the observation that the one
dimensional derivative itself, acting as the time derivative%
\footnote{We follow here the time-honored convention that physicists practice
by labeling the partial time derivative by index zero.%
} $\partial_{0}$ (on the full time line $\mathbb{R}$), can be realized
as a maximal strictly positive definite operator in an appropriately
exponentially weighted $L^{2}$-type Hilbert space over the real time-line
$\mathbb{R}$. It is in fact this strict positive definiteness of
$\partial_{0}$ which opens access to the problem class we shall describe
later.

Indeed, \emph{$\partial_{0}$ }simply turns out to be a\emph{ normal
}operator with $\Re\partial_{0}$ being just multiplication by a positive
constant. Moreover, this time-derivative $\partial_{0}$ is continuously
invertible and, as a normal operator, admits a straightforwardly defined
functional calculus, which can canonically be extended to operator-valued
functions. Indeed, since we have control over the positivity constant
via the choice of the weight, the norm of $\partial_{0}^{-1}$ can
be made as small as wanted. This observation is the Hilbert space
analogue to the technical usage of the exponentially weighted sup-norm
as introduced by D. Morgenstern, \cite{morgenstern1952beitraege},
and allows for the convenient inclusion of a variety of perturbation
terms.

Having established time-differentiation $\partial_{0}$ as a normal
operator, we are led to consider evolutionary problems as operator
equations in a space-time setting, rather than as an ordinary differential
equation in a spatial function space. The space-time operator equation
perspective implies to deal with sums of unbounded operators, which,
however, in our particular context is -- due to the limitation of
remaining in a Hilbert space setting and considering only sums, where
one of the terms is a function of the normal operator $\partial_{0}$
-- not so deep an issue. For more general operator sums or for a Banach
space setting more sophisticated and powerful tools from the abstract
theory of operator sums initiated by the influential papers by da
Prato and Grisvard, \cite{Prato1975}, and Brezis and Haraux, \cite{zbMATH03505695},
may have to be employed. In these papers operator sums $\frac{d}{dt}+A$
typically occurring in the context of explicit first order differential
equations in Banach spaces are considered as applications of the abstract
theory, compare also e.g. \cite[Chapter 2, Section 7]{zbMATH03302617}.
The obvious overlap with the framework presented in this paper would
be the Hilbert space situation in the case $\mathcal{M}=1$. We shall,
however, not pursue to explore how the strategies developed in this
context may be expanded to include more complicated material laws,
which indeed has been done extensively in the wake of these ideas,
but rather stay with our limited problem class, which covers a variety
of diverse problems in a highly unified setting. Naturally the results
available for specialized cases are likely to be stronger and more
general. 

For introductory purposes let us consider the typical linear case
of such a space-time operator equation
\begin{equation}\label{eq:evo_1}
\partial_{0}V+AU=f,
\end{equation}
where $f$ are given data, $A$ is a -- usually -- purely spatial
-- prototypically \emph{skew-selfadjoint}%
\footnote{\label{fn:paradigm}Note that in our canonical reference situation
$A$ is \emph{skew}-selfadjoint rather than selfadjoint and so we
have $\Re\left\langle u|Au\right\rangle _{H}=0$ for all $u\in D\left(A\right)$
and coercitivity of $A$ is out of the question. To make this concrete:
let $\partial_{1}$ denote the weak $L^{2}\left(\mathbb{R}\right)$-derivative.
Then our paradigmatic reference example on this elementary level would
be the transport operator $\partial_{0}+\partial_{1}$ rather than
the heat conduction operator $\partial_{0}-\partial_{1}^{2}$.%
} -- operator and the quantities $U,V$ are linked by a so-called material
law
\[
V=\mathcal{M}U.
\]
Solving such an equation would involve establishing the bounded invertibility
of $\overline{\partial_{0}\mathcal{M}+A}$. As a matter of ``philosophy''
we shall think of the -- here linear -- material law operator $\mathcal{M}$
as encoding the complexity of the physical material whereas $A$ is
kept simple and usually only contains spatial derivatives. If $\mathcal{M}$
commutes with $\partial_{0}$ we shall speak of an autonomous system,
otherwise we say the system is non-autonomous. 

Another -- more peripheral -- observation with regards to the classical
problems of mathematical physics is that they are predominantly of
first order not only with respect to the time derivative, which is
assumed in the above, but frequently even in\emph{ both} the temporal
and spatial derivatives. Indeed, acoustic waves, heat transport, visco-elastic
and electro-magnetic waves etc. are governed by first order systems
of partial differential operators, i.e. $A$ is a first order differential
operator in spatial derivatives, which only after some elimination
of unknowns turn into the more common second order equations, i.e.
the wave equation for the pressure field, the heat equation for the
temperature distribution, the visco-elastic wave equation for the displacement
field and the vectorial wave equation for the electric (or magnetic)
field. It is, however, only in the direct investigation of the first
order system that, as we shall see, the unifying feature of monotonicity becomes easily visible. Moreover, the first order formulation
reveals that the spatial derivative operator $A$ is of a Hamiltonian
type structure and consequently, by imposing suitable boundary conditions,
turn out -- in the standard cases -- to lead to skew-selfadjoint $A$
in a suitable Hilbert space $H$. So, from this perspective there
is also undoubtedly a flavor of the concept of \emph{symmetric hyperbolic
systems} as introduced by K. O. Friedrichs, \cite{Ref166}, and of
Petrovskii well-posedness, \cite{Petrovskii1986}, at the roots of
this approach.

For illustrational purposes let us consider from a purely heuristic
point of view the $\left(1+1\right)$-dimensional system 
\begin{equation}
\left(\partial_{0}M_{0}+M_{1}+A\right)\left(\begin{array}{c}
p\\
s
\end{array}\right)=\left(\begin{array}{c}
f\\
0
\end{array}\right),\label{eq:ex-system}
\end{equation}
where 
\[
M_{0}\coloneqq\left(\begin{array}{cc}
\eta & 0\\
0 & \alpha
\end{array}\right),\: M_{1}\coloneqq\left(\begin{array}{cc}
\left(1-\eta\right) & 0\\
0 & \left(1-\alpha\right)
\end{array}\right),\:\alpha,\eta\in\left\{ 0,1\right\} ,\: A\coloneqq\left(\begin{array}{cc}
0 & \partial_{1}\\
\partial_{1} & 0
\end{array}\right),
\]
 and $\partial_{1}$ is simply the weak $L^{2}\left(\mathbb{R}\right)$-derivative,
compare Footnote \ref{fn:paradigm}. Assuming $\eta=1$, $\alpha=1$,
in (\ref{eq:ex-system}) clearly results in a (symmetric) hyperbolic
system and eliminating the unknown $s$ yields the wave equation in
the form 
\[
\left(\partial_{0}^{2}-\partial_{1}^{2}\right)p=\partial_{0}f.
\]
For $\eta=1$,$\alpha=0$ we obtain a differential algebraic system,
which represents the parabolic case in the sense that after eliminating
$s$ we obtain the heat equation 
\[
\left(\partial_{0}-\partial_{1}^{2}\right)p=f.
\]
Finally, if both parameters vanish, we obtain a 1-dimensional elliptic
system and as expected after eliminating the unknown $s$ a 1-dimensional
elliptic equation for $p$ results: 
\[
\left(1-\partial_{1}^{2}\right)p=f.
\]
Allowing now $\alpha,\eta$ to be $L^{\infty}\left(\mathbb{R}\right)$-multiplication
operators with values in $\left\{ 0,1\right\} $, which would allow
the resulting equations to jump in space between elliptic, parabolic
and hyperbolic ``material properties'', could be a possible scenario
we envision for our framework. As will become clear, the basic idea
of this simple ``toy'' example can be carried over to general evolutionary
equations. Also in this connotation there are stronger and more general
results for specialized cases. A problem of this flavor of ``degeneracy''
has been for example discussed for a non-autonomous, degenerate integro-differential
equation of parabolic/elliptic type in \cite{zbMATH05124752,zbMATH05530294}. 

A prominent feature distinguishing general operator equations from
those describing dynamic processes is the specific role of time, which
is not just another space variable, but characterizes dynamic processes
via the property of causality%
\footnote{Note that this perspective specifically excludes the case of a periodic
time interval, where ``before'' and ``after'' makes little sense.%
}. Requiring causality for the solution operator $\left(\overline{\partial_{0}\mathcal{M}+A}\right)^{-1}$
results in very specific types of material law operators $\mathcal{M}$,
which are causal and compatible with causality of $\left(\overline{\partial_{0}\mathcal{M}+A}\right)^{-1}$.
This leads to deeper insights into the structural properties of mathematically
viable models of physical phenomena. 

The solution theory can be extended canonically to temporal distributions
with values in a Hilbert space. In this perspective initial value
problems, i.e. prescribing $V\left(0+\right)$ in (\ref{eq:evo_1}), amount to allowing
a source term $f$ of the form $\delta\otimes V_{0}$ defined by
\[
\left(\delta\otimes V_{0}\right)\left(\varphi\right)\coloneqq\left\langle V_{0}|\varphi\left(0\right)\right\rangle _{H}
\]
for $\varphi$ in the space $C_{c}\left(\mathbb{R},H\right)$ of continuous
$H$-valued functions with compact support. This source term encodes
the classical initial condition $V\left(0+\right)=V_{0}$. For the
constant coefficient case -- say -- $\mathcal{M}=1$, it is a standard
approach to establish the existence of a fundamental solution (or
more generally, e.g. in the non-autonomous case, a Green's functions)
and to represent general solutions as convolution with the fundamental
solution. This is of course nothing but a description of the continuous
one-parameter semi-group approach. Indeed, such a semi-group $U=\left(U\left(t\right)\right)_{t\in\lci0\infty}$
is, if extended by zero to the whole real time line, nothing but the
fundamental solution $G=\left(G\left(t\right)\right)_{t\in\mathbb{R}}$
with $G\left(t\right)\coloneqq\begin{cases}
U\left(t\right) & \mbox{ for }t\in\lci0\infty,\\
0 & \mbox{ for }t\in\oi{-\infty}0.
\end{cases}$ In the non-autonomous case, the role of $U$ is played by a so-called
evolution family. The regularity properties of such fundamental solutions
results in stronger regularity properties of the corresponding solutions. Since we allow $\mathcal{M}$
to be more general, constructing such fundamental solutions/Green's
functions is not always available or feasible. Indeed, we shall focus
for sake of simplicity on the case that the data $f$ do not contain
such Dirac type sources, which can be achieved simply by subtracting
the initial data or by including distributional objects such as $\delta\otimes V_{0}$
in the Hilbert space structure via extension to extrapolation spaces,
which for sake of simplicity we will not burden this presentation
with.

As a trade-off for our constraint, which in the simplest linear case
would reduce our discussion to considering $\partial_{0}+A$ as a
sum of commuting normal operators, which clearly cannot support any
claim of novelty, see e.g. \cite{zbMATH02500655}, we obtain by allowing
for a large class of material law operators $\mathcal{M}$ access
to a large variety of problems including such diverse topics as partial
differential-algebraic systems, integro-differential equations and
evolutionary equations of changing type in one unified setting. 

Based on the linear theory one has of course a first access to non-linear
problems by including Lipschitz continuous perturbations. A different
generalization towards a non-linear theory can be done by replacing
the (skew-selfadjoint) operator $A$ by a maximal monotone relation
or allowing for suitable maximal monotone material law relations (rather
than material law operators). In this way the class of evolutionary
problems also comprises evolutionary \emph{inclusion}s. 

Having established well-posedness, qualitative properties associated
with the solution theory come into focus. A first step in this direction
is done for the autonomous case by the discussion of the issue of
``exponential stability''. One can give criteria with regards to
the material law $\mathcal{M}$ ensuring exponential stability. 

Another aspect in connection with the discussion of partial differential
equations of mathematical physics is the problem of continuous dependence
of the solution on the coefficients. A main application of results
in this direction is the theory of homogenization, i.e., the study
of the behavior of solutions of partial differential equations having
large oscillatory coefficients. It is natural to discuss the weak
operator topology for the coefficients and it turns out that the problem
class under consideration is closed under limits in this topology
if further suitable structural assumptions are imposed. The closedness
of the problem class is a remarkable feature of the problem class,
which is spacious enough to also include -- hidden in the generality
of the material law operator -- integro-differential evolutionary
problems. In this regard it is worth recalling that there are examples
already for ordinary differential equations, for which the resulting
limit equations are of integro-differential type, showing that differential
equations are in this respect a too small problem class.

Although links to the core concepts which have entered the described
approach are too numerous to be recorded here to any appropriate extent,
we shall try modestly to put them in a bibliographical context. The
concept of the time-derivative considered as a continuously invertible
operator in a suitably weighted Hilbert space has its source in \cite{Picard1989}.
It has been employed in obtaining a solution theory for evolutionary
problems in the spirit described above only more recently, compare
e.g. \cite[Chapter 6]{Picard_McGhee}. General perspectives for well-posedness
to partial differential equations via strict positive definiteness
are of course at the heart of the theory of elliptic partial differential
equations. 

For the theory of maximal monotone operators/relations, we refer to
\cite{Brezis1971,papageogiou,hu2000handbook,Morosanu1988}. For non-autonomous equations,
we refer to \cite{Sohr1978,Tanabe1979} and -- with a focus on maximal
regularity -- to \cite{ADLO12}. Note that due to the generality of
our approach, one cannot expect maximal regularity of the solution
operator in general. In fact, maximal regularity for the solution
operator just means that the operator sum is already closed with its
natural domain. This is rarely the case neither in the paradigmatic
examples nor in our expanded general setting. 

For results regarding exponential stability for a class of hyperbolic
integro-differential equations, we refer to \cite{Pruss2009} and
to \cite{Datko1970,Gearhardt1978,EngNag} for the treatment of this
issue in the context of one-parameter semi-groups. A detailed introduction
to the theory of homogenization can be found in \cite{BenLiPap} and
in \cite{CioDon}. We also refer to \cite{TartarMemEff,TartarNonlHom},
where homogenization for ordinary differential equations has been
discussed extensively. 

The paper itself is structured as follows. We begin our presentation
with a description of the underlying prerequisites, even to the extent
that we review the celebrated well-posedness requirements due to Hadamard,
which we found inspirational for a deeper understanding of the case
of differential inclusions. A main point in this first section is
to introduce the classical concept of maximal strictly monotone relations
and to recall that such relations are inverse relations of Lipschitz
continuous mappings (Minty's Theorem \ref{thm:Minty}). Specializing
to the linear case we recall in particular the Lax-Milgram lemma (Corollary
\ref{cor:Lax-Mil}) and as a by-product derive a variant of the classical
solution theory for elliptic type equations. Moreover, we comment
on a general solution theory for (non-linear) elliptic type equations
in divergence form relying only on the validity of a Poincar\'e type
estimate (Theorem \ref{thm: STHE}). We conclude this section with
an example for an elliptic type equation with possible degeneracies
in the coefficients as an application of the ideas presented.

Based on the first section's general findings, Section \ref{sec:Linear-Evolutionary-Equations}
deals with the solution theory for linear evolutionary equations.
After collecting some guiding examples in Subsection \ref{sub:Leading-Examples},
we rigorously establish in Subsection \ref{sub:The-Time-Derivative}
the time-derivative as a strictly monotone, normal operator in a suitably
weighted Hilbert space. Based on this and with resulting structural
properties, such as a functional calculus, at hand, in Subsection
\ref{sub:evo_eq_auto} (Theorem \ref{thm:sol_theo_skew}) we formulate
a solution theory for autonomous, linear evolutionary equations. The
subsequent examples review some of those mentioned in Subsection \ref{sub:Leading-Examples}
in a rigorous functional analytic setting to illustrate the applicability
of the solution theory. As further applications we show that Theorem
\ref{thm:sol_theo_skew} also covers integro-differential equations
(Theorem \ref{thm:integro}) and equations containing fractional time-derivatives
(Theorem \ref{thm:frac}). We conclude this subsection with a conceptual
study of exponential stability (Definition \ref{Def:exp_stab} and
Theorem \ref{thm:criterion-stab}) in our theoretical context. 

In Subsection \ref{sub:The-closedness-of}, starting out with a short
motivating introductory part concerning homogenization issues, we
discuss the closedness of the problem class under the weak operator
topology for the coefficients. A first theorem in this direction is
then obtained as Theorem \ref{thm:Hom_1_auto_comp}. After presenting
some examples, we continue our investigation of homogenization problems
first for ordinary differential equations (Theorems \ref{thm:ode_degen}
and \ref{thm:ode_non_degen}). Then we formulate a general homogenization
result (Theorem \ref{thm:hom_gen_inf_null}), which is afterwards
exemplified by considering Maxwell's equations and in particular the
so-called eddy current problem of electro-magnetic theory. 

In Subsection \ref{sub:The-non-autonomous-case} we extend the solution
theory to include the non-autonomous case. A first step in this direction
is provided by Theorem \ref{thm:Solutiontheory}, for which the illustrative
Example \ref{ex:illustrating_non_autp} is given as an application.
A common generalization of the Theorems \ref{thm:sol_theo_skew} and
\ref{thm:Solutiontheory} is given in Theorem \ref{thm:non-auto2}.
This is followed by an adapted continuous dependence result Theorem
\ref{thm:general_cont_depend}, which in particular is applicable
to homogenization problems. A detailed example of a mixed type problem
concludes this section.

Section \ref{sec:Monotone-Evolutionary-Problems} gives an account
for a non-linear extension of the theory. Similarly to the previous
section, the results are considered in the autonomous case first (Subsection
\ref{sub:incl_auto}) and then generalized to the non-autonomous case
(Subsection \ref{sub:The-non-autonomous-case-incl}). Subsection \ref{sub:Problems-with-non-linear-bdy}
concludes this section and the paper with a discussion of an application
to evolutionary problems with non-linear boundary conditions. One
of the guiding conceptual ideas here is to avoid regularity assumptions
on the boundary of the underlying domain. This entails replacing the
classical boundary trace type data spaces, by a suitable generalized
analogue of $1$-harmonic functions. We exemplify our results with
an impedance type problem for the wave equation and with the elasic equations with frictional boundary conditions.

Note that inner products, indeed all sesqui-linear forms, are --
following the physicists habits -- assumed to be conjugate-linear
in the first component and linear in the second component.

\section{Well-posedness and Monotonicity\label{sec:Well-posedness-and-Monotonicity}}

To begin with, let us recall the well-known Hadamard requirements for
well-posedness. It is appropriate for our purposes, however, to formulate
them for the case of relations rather than -- as usually done -- for mappings.
Hadamard proposed to define what ``reasonably solvable'' should
entail. Solving a problem involves to establish a binary relation
$P\subseteq X\times Y$ between ``data'' in a topological space
$Y$ and corresponding ``solutions'' in a topological space $X$,
which is designed to cover a chosen pool of examples to our satisfaction.
Finding a solution then means, given $y\in Y$ find $x\in X$ such
that $\left(x,y\right)\in P$. If we wish to supply a solution for
all possible data, there are some natural requirements that the problem
class $P$ should have to ensure that this task is reasonably conceived.
To exclude cases of trivial failure to describe a solution theory
for $P$, we assume first that $P$ is already \emph{closed} in $X\times Y$.
Then well-posedness in the spirit of Hadamard requires the following
three properties.
\begin{enumerate}
\item (``Uniqueness'' of solution) the inverse relation $P^{-1}$ is right-unique,
thus, giving rise to a mapping%
\footnote{For subsets $M\subseteq X,\, N\subseteq Y$ the\emph{ post-set of
$M$ under $P$ }and the \emph{pre-set of $N$ under $P$ }is defined
as $P[M]\coloneqq\left\{ y\in Y\,|\,\bigvee_{x\in M}(x,y)\in P\right\} $
and $[N]P\coloneqq\left\{ x\in X\,|\,\bigvee_{y\in N}(x,y)\in P\right\} $,
respectively. The post-set $P\left[X\right]$ of the whole space $X$
under $P$ is then the domain of the mapping $P^{-1}$.%
}
\[
P^{-1}:P\left[X\right]\subseteq Y\to X
\]
performing the association of ``data'' to ``solutions''. 
\item (``Existence'' for every given data) we have that
\[
P\left[X\right]=Y,
\]
i.e. $P^{-1}$ is defined on the whole data space $Y$.
\item (``Continuous dependence'' of the solution on the data) The mapping
$P^{-1}$ is continuous.
\end{enumerate}
In case of $P$ being a mapping then $[Y]P=P^{-1}\left[Y\right]$
is the \emph{domain $D\left(P\right)$ of $P$.} For our purposes
here we shall assume that $X=Y$ and that $X$ is a complex Hilbert
space. 

A very particular but convenient instance of well-posedness, which
nevertheless appears to dominate in applications, is the maximal monotonicity
of $P-c\coloneqq\left\{ (x,y-cx)\in X\times X\,|\,(x,y)\in P\right\} $
for some $c\in\oi0\infty$. Recall that a relation $Q\subseteq X\times X$
is called \emph{monotone} if\emph{ }
\[
\Re\left\langle x_{0}-x_{1}|y_{0}-y_{1}\right\rangle _{X}\geq0
\]
for all $\left(x_{0},y_{0}\right),\left(x_{1},y_{1}\right)\in Q$.
Such a relation $Q$ is called \emph{maximal} if there exists no proper
monotone extension in $X\times X$. In other words, if $\left(x_{1},y_{1}\right)\in X\times X$
is such that $\Re\left\langle x_{0}-x_{1}|y_{0}-y_{1}\right\rangle _{X}\geq0$
for all $\left(x_{0},y_{0}\right)\in Q$, then $\left(x_{1},y_{1}\right)\in Q$. 
\begin{thm}[Minty, \cite{Minty1962}]
\label{thm:Minty} Let $\left(P-c\right)\subseteq X\times X$ be
a maximal monotone relation%
\footnote{Note here that maximal monotone relations are automatically closed,
see e.g. \cite[Proposition 2.5]{Brezis1971}.%
} for some $c\in\oi0\infty$%
\footnote{In this case $P$ would be called \emph{maximal strictly monotone}.%
}. Then the inverse relation $P^{-1}$ defines a Lipschitz continuous
mapping with domain $D(P^{-1})=X$ and $\frac{1}{c}$ as possible
Lipschitz constant.\end{thm}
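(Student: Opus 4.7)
The plan is to extract two pieces from the hypothesis that $P-c$ is maximal monotone: first, right-uniqueness of $P^{-1}$ together with the Lipschitz estimate with constant $1/c$; and second, that $P^{-1}$ is defined on all of $X$. The first piece uses only the monotonicity of $P-c$ and is essentially algebraic, while the second is the content of Minty's surjectivity principle, whose proof exploits maximality in an essential way.

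For the Lipschitz bound I would pick any $(x_0,y_0),(x_1,y_1)\in P$, note that $(x_i, y_i - c x_i)\in P-c$ by definition of $P-c$, and then apply the monotonicity of $P-c$ to these two pairs to obtain
\[
0\leq\Re\langle x_0-x_1 \mid (y_0 - c x_0)-(y_1 - c x_1)\rangle_X = \Re\langle x_0-x_1\mid y_0-y_1\rangle_X - c\|x_0-x_1\|_X^2.
\]
Combining this with the Cauchy--Schwarz inequality yields $c\|x_0-x_1\|_X \leq \|y_0-y_1\|_X$. Specialising to $y_0=y_1$ forces $x_0=x_1$, so $P^{-1}$ is right-unique and thus defines a (single-valued) mapping; for general data the same estimate is exactly the Lipschitz bound with constant $1/c$ on $D(P^{-1})$. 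Note that only monotonicity of $P-c$, not maximality, is needed in this step.

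The harder part is to show $D(P^{-1})=X$. Given $y\in X$, the task is to find $x\in X$ with $(x,y-cx)\in Q\coloneqq P-c$; equivalently, to prove that the range of $cI+Q$ covers $X$. This is the classical Minty surjectivity result for the maximal monotone relation $Q$: for any $\lambda>0$, $\mathrm{Range}(\lambda I+Q)=X$. The argument I would carry out is to consider, for fixed $y$, the affine slice $L_y\coloneqq\{(x,y-cx):x\in X\}\subseteq X\times X$ and to show $L_y\cap Q\neq\emptyset$. If the intersection were empty, one would construct at a suitably chosen candidate point $x^{\ast}$ a strict monotone extension of $Q$ by adjoining $(x^{\ast},y-cx^{\ast})$, contradicting maximality. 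The candidate $x^{\ast}$ is produced either variationally (as a minimiser of a coercive, lower semicontinuous functional encoding $Q$) or, equivalently, as the fixed point of the contraction $x\mapsto J_\lambda(y+(\lambda-c)x)/\lambda$ for an appropriate $\lambda$, where the coercivity injected by $c>0$ delivers the a priori bound that makes the fixed-point argument close. This surjectivity step, delicate because it must combine the Hilbert space geometry with the abstract Zorn-type maximality hypothesis, is the principal obstacle; once it is in hand, the Lipschitz bound from the first part automatically upgrades the resulting single-valued inverse to the asserted Lipschitz mapping on all of $X$.
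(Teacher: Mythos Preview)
Your proposal is correct and follows essentially the same approach as the paper: you derive the strict monotonicity estimate $c\|x_0-x_1\|_X\leq\|y_0-y_1\|_X$ from the monotonicity of $P-c$, use it for right-uniqueness and the Lipschitz bound, and identify the surjectivity $P[X]=X$ as the genuinely difficult step. The paper does exactly this and, rather than carrying out the surjectivity argument you sketch, simply omits it and refers to Minty's original paper; your outline of that step (via resolvents/fixed points or a variational construction contradicting maximality) is in fact more detailed than what the paper provides.
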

\begin{proof}
We first note that the monotonicity of $P-c$ implies
\begin{equation}
\bigwedge_{\left(x_{0},y_{0}\right),\left(x_{1},y_{1}\right)\in P}\Re\left\langle x_{0}-x_{1}|y_{0}-y_{1}\right\rangle _{X}\geq c\left\langle x_{0}-x_{1}|x_{0}-x_{1}\right\rangle _{X}.\label{eq:strict:pos}
\end{equation}
Hence, if $y_{0}=y_{1}$ then $x_{0}$ must equal $x_{1}$, i.e. the
uniqueness requirement is satisfied, making $P^{-1}:P\left[X\right]\to X$
a well-defined mapping. Moreover, $P\left[X\right]$ is closed, since
from (\ref{eq:strict:pos}) we get 
\[
\bigwedge_{\left(x_{0},y_{0}\right),\left(x_{1},y_{1}\right)\in P}\left|y_{0}-y_{1}\right|_{X}\geq c\left|x_{0}-x_{1}\right|_{X}.
\]
The actually difficult part of the proof is to establish that $P\left[X\right]=X$.
This is the part we will omit and refer to \cite{Minty1962} instead.
To establish Lipschitz continuity of $P^{-1}:X\to X$ we observe that
\begin{align*}
\bigwedge_{y_{0},y_{1}\in X}\: c\left|P^{-1}\left(y_{0}\right)-P^{-1}\left(y_{1}\right)\right|_{X}^{2} & \leq\Re\left\langle P^{-1}\left(y_{0}\right)-P^{-1}\left(y_{1}\right)|y_{0}-y_{1}\right\rangle _{X}\\
 & \leq\left|P^{-1}\left(y_{0}\right)-P^{-1}\left(y_{1}\right)\right|_{X}\left|y_{0}-y_{1}\right|_{X},
\end{align*}
holds, from which the desired continuity estimate follows.
\end{proof}
For many problems, the strict monotonicity is easy to obtain. The
maximality, however, needs a deeper understanding of the operators
involved. In the linear case, writing now $A$ for $P$, there is
a convenient set-up to establish maximality by noting that
\[
\left(\left[\left\{ 0\right\} \right]A^{*}\right)^{\perp}=\overline{A\left[X\right]}
\]
according to the projection theorem. Here we denote by $A^{\ast}$
the \emph{adjoint of $A$, }given as the binary relation 
\[
A^{\ast}\coloneqq\left\{ (u,v)\in X\times X\, | \,\bigwedge_{(x,y)\in A}\langle y|u\rangle_{X}=\langle x|v\rangle_{X}\right\} .
\]
Thus, maximality for the strictly monotone linear mapping (i.e. strictly
accretive) $A$ is characterized%
\footnote{Recall that $A$ has closed range.%
} by
\begin{equation}
\left[\left\{ 0\right\} \right]A^{*}=\left\{ 0\right\} ,\label{eq:max}
\end{equation}
i.e. the uniqueness for the adjoint problem. Characterization (\ref{eq:max})
can be established in many ways, a particularly convenient one being
to require that $A^{*}$ is also strictly monotone. With this we arrive
at the following result.
\begin{thm}
\label{lin-theo}Let $A$ and $A^{*}$ be closed linear strictly monotone
relations in a Hilbert space $X$. Then for every $f\in X$ there
is a unique $u\in X$ such that
\[
\left(u,f\right)\in A.
\]
Indeed, the solution depends continuously on the data in the sense
that we have a (Lipschitz-) continuous linear operator $A^{-1}:X\to X$
with
\[
u=A^{-1}f.
\]

\end{thm}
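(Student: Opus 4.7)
The plan is to deduce this as the linear specialization of Minty's theorem (Theorem \ref{thm:Minty}), using the characterization (\ref{eq:max}) of maximality in the linear setting to reduce maximality of $A$ to the strict monotonicity of $A^{*}$.

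First I would record the a priori estimate coming directly from strict monotonicity. For any $(u_{0},f_{0}),(u_{1},f_{1})\in A$, linearity together with $\Re\langle u_{0}-u_{1}|f_{0}-f_{1}\rangle_{X}\geq c\,\|u_{0}-u_{1}\|_{X}^{2}$ and the Cauchy--Schwarz inequality yields
\[
c\,\|u_{0}-u_{1}\|_{X}\leq\|f_{0}-f_{1}\|_{X}.
\]
This gives right-uniqueness of $A^{-1}$ (hence the existence of the Lipschitz constant $1/c$ once surjectivity is known) and, since $A$ is closed, also shows that the range $A[X]$ is closed in $X$.

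Next I would show that $A[X]$ is dense. According to the discussion preceding the theorem, this amounts to verifying $[\{0\}]A^{*}=\{0\}$. So let $x\in[\{0\}]A^{*}$, i.e.\ $(x,0)\in A^{*}$. Since $A^{*}$ is a linear relation it contains the pair $(0,0)$, and applying the strict monotonicity inequality for $A^{*}$ to the pairs $(x,0)$ and $(0,0)$ gives
\[
0=\Re\langle x-0\,|\,0-0\rangle_{X}\geq c\,\|x\|_{X}^{2},
\]
so $x=0$. Thus $[\{0\}]A^{*}=\{0\}$, and by the projection theorem identity $(\,[\{0\}]A^{*}\,)^{\perp}=\overline{A[X]}$ we conclude $\overline{A[X]}=X$.

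Combining density with closedness from Step 1 yields $A[X]=X$, so $A^{-1}\colon X\to X$ is everywhere defined; the a priori estimate then shows it is Lipschitz with constant $1/c$, and linearity of $A^{-1}$ follows from linearity of $A$. The only non-routine ingredient is the projection-theorem characterization (\ref{eq:max}) of maximality, which is where the hypothesis on $A^{*}$ (rather than just on $A$) genuinely enters; the rest is a direct transcription of the strict monotonicity estimate. Alternatively, once one observes that the linear strictly monotone $A-c$ satisfies the hypotheses of Minty's theorem (its maximality being exactly the content of $[\{0\}](A-c)^{*}=\{0\}$, which follows verbatim from the argument above applied to $A^{*}-c$), the conclusion can be read off directly from Theorem \ref{thm:Minty}.
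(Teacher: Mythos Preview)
Your proof is correct and follows essentially the same approach as the paper: the paper does not give a separate proof block for this theorem but derives it from the discussion immediately preceding the statement, namely the closed-range estimate from strict monotonicity of $A$, the projection-theorem identity $([\{0\}]A^{*})^{\perp}=\overline{A[X]}$, and the observation that strict monotonicity of $A^{*}$ forces $[\{0\}]A^{*}=\{0\}$. Your write-up makes these steps explicit (and correctly notes the alternative of invoking Theorem~\ref{thm:Minty} once maximality of $A-c$ is in hand), which is exactly the intended argument.
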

Of course, the case that $A$ is a closed, densely defined linear
operator is a common case in applications.
\begin{cor}
\label{lin:op:theo}Let A be a closed, densely defined, linear operator
and $A,\: A^{*}$ strictly accretive in a Hilbert space $X$. Then
for every $f\in X$ there is a unique $u\in X$ such that
\[
Au=f.
\]
Indeed, solutions depend continuously on the data in the sense that
we have a (Lipschitz) continuous linear operator $A^{-1}:X\to X$
with
\[
u=A^{-1}f.
\]

\end{cor}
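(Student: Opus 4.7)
The plan is to deduce this corollary directly from Theorem~\ref{lin-theo} by recognizing that all its hypotheses are satisfied, once we identify the operator-theoretic adjoint with the relation-theoretic adjoint defined in the excerpt. No new analytic work is needed beyond a careful matching of the two settings.

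First, I would observe that a closed linear operator $A$ with domain $D(A)\subseteq X$ is nothing but a closed linear subspace of $X\times X$ (its graph) satisfying the right-uniqueness property. Strict accretivity of $A$, i.e.\ $\Re\langle x|Ax\rangle_X\geq c\,\langle x|x\rangle_X$ for some $c>0$ and all $x\in D(A)$, is by expansion equivalent to the relation $A-c$ being monotone in the sense of the preceding definition; hence $A$ is a closed linear strictly monotone relation.

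Second, I would verify that the relation-theoretic adjoint defined in the text coincides with the operator-theoretic adjoint when $A$ is densely defined. Indeed, the density of $D(A)$ gives the right-uniqueness of the relation $A^{\ast}$: if $(u,v_1),(u,v_2)\in A^{\ast}$, then $\langle x|v_1-v_2\rangle_X=0$ for all $x\in D(A)$, whence $v_1=v_2$. Thus $A^{\ast}$ is a linear operator, and by the assumption of strict accretivity of the operator $A^{\ast}$ it is a strictly monotone relation. Closedness of $A^{\ast}$ is standard (and also follows from it being the orthogonal complement, up to the symplectic flip $(x,y)\mapsto(-y,x)$, of the graph of $A$ in $X\times X$).

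Third, with $A$ and $A^{\ast}$ both being closed linear strictly monotone relations, Theorem~\ref{lin-theo} applies verbatim and yields existence of a unique $u\in X$ with $(u,f)\in A$ (i.e.\ $u\in D(A)$ and $Au=f$) for every $f\in X$, together with the Lipschitz continuity of $A^{-1}\colon X\to X$. The only conceptual step, and the point most likely to require a word of comment, is the identification of the two notions of adjoint; once that identification is in place, the corollary is an immediate specialization.
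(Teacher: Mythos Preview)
Your proposal is correct and matches the paper's approach: the paper presents Corollary~\ref{lin:op:theo} as an immediate specialization of Theorem~\ref{lin-theo} without a separate proof, and your argument supplies precisely the routine verification (graph identification and coincidence of the two notions of adjoint under the density assumption) that makes this specialization go through.
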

In the case that $A$ and $A^{*}$ are linear operators with $D\left(A\right)=D\left(A^{*}\right)$
the situation simplifies, since then strict accretivity of $A$ implies
strict accretivity of $A^{*}$ due to
\[
\Re\left\langle x|Ax\right\rangle _{X}=\Re\left\langle A^{*}x|x\right\rangle _{X}=\Re\left\langle x|A^{*}x\right\rangle _{X}
\]
for all $x\in D\left(A\right)=D\left(A^{*}\right)$.

~
\begin{cor}
\label{lin:op:D(A):theo}Let $A$ be a closed, densely defined, linear
strictly accretive operator in a Hilbert space $X$ with $D\left(A\right)=D\left(A^{*}\right)$.
Then for every $f\in X$ there is a unique $u\in X$ such that
\[
Au=f.
\]
Indeed, the solution depends continuously on the data in the sense
that we have a continuous linear operator $A^{-1}:X\to X$ with
\[
u=A^{-1}f.
\]

\end{cor}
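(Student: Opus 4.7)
The plan is to reduce the claim directly to Corollary \ref{lin:op:theo} by verifying its remaining hypothesis, namely that $A^{*}$ is closed, densely defined, and strictly accretive. Once these three properties are in place for $A^{*}$ as well, Corollary \ref{lin:op:theo} applies verbatim and yields both the unique solvability and the continuous dependence claimed here.

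Closedness of $A^{*}$ is automatic: the adjoint of a densely defined linear operator in a Hilbert space is always closed, so no work is needed. Dense definition of $A^{*}$ likewise follows from $A$ being closed and densely defined, since then $A^{**} = A$ forces $D(A^{*})$ to be dense. The only substantive point is to transfer strict accretivity from $A$ to $A^{*}$, and for this I would use precisely the identity displayed in the paragraph preceding the statement. For $x \in D(A) = D(A^{*})$, the defining relation of the adjoint gives $\langle Ax | x \rangle_{X} = \langle x | A^{*}x \rangle_{X}$, and combining this with $\Re \langle Ax | x \rangle_{X} = \Re \langle x | Ax \rangle_{X}$ yields
$$\Re \langle x | A^{*}x \rangle_{X} = \Re \langle x | Ax \rangle_{X} \geq c \, |x|_{X}^{2}$$
with the same strict accretivity constant $c > 0$ that witnesses strict accretivity of $A$.

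There is no real obstacle beyond this short computation: the heavy lifting, including the appeal to Minty's Theorem \ref{thm:Minty} and the reformulation of maximality of a strictly accretive linear operator as uniqueness for the adjoint problem, has already been carried out in Theorem \ref{lin-theo} and Corollary \ref{lin:op:theo}. The role of the present corollary is to isolate the convenient criterion $D(A) = D(A^{*})$ under which strict accretivity of the adjoint comes for free, so that one does not have to verify it as an additional hypothesis.
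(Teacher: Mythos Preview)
Your proposal is correct and follows exactly the approach indicated in the paper: the paragraph immediately preceding the corollary already records the identity $\Re\langle x|Ax\rangle_X = \Re\langle A^*x|x\rangle_X = \Re\langle x|A^*x\rangle_X$ for $x\in D(A)=D(A^*)$, from which strict accretivity of $A^*$ follows, and then Corollary~\ref{lin:op:theo} applies. Your added remarks on closedness and dense definition of $A^*$ are standard and implicit in the paper's presentation.
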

The domain assumption of the last corollary is obviously satisfied
if $A:X\to X$ is a continuous linear operator. This observation leads
to the following simple consequence.
\begin{cor}
\label{cont-lin-op-theo}Let $A:X\to X$ be a strictly accretive,
continuous, linear operator in the Hilbert space $X$. Then for every
$f\in X$ there is a unique $u\in X$ such that
\[
Au=f.
\]
Indeed, the solution depends continuously on the data in the sense
that we have a continuous linear operator $A^{-1}:X\to X$ with
\[
u=A^{-1}f.
\]

\end{cor}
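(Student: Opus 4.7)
The plan is to reduce the statement directly to the preceding Corollary \ref{lin:op:D(A):theo}, since a strictly accretive continuous linear operator is essentially the most benign instance of the situation handled there. The task is then simply to verify the hypotheses of that corollary.

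First I would observe that $A:X\to X$, being continuous and defined on the whole of $X$, is automatically closed and densely defined (indeed, $D(A)=X$). Next I would appeal to the standard fact that the Hilbert space adjoint of a bounded everywhere-defined operator is itself bounded and everywhere-defined, so that $D(A^{*})=X=D(A)$. At this point the domain assumption of Corollary \ref{lin:op:D(A):theo} is in force, and the computation
\[
\Re\left\langle x|Ax\right\rangle _{X}=\Re\left\langle A^{*}x|x\right\rangle _{X}=\Re\left\langle x|A^{*}x\right\rangle _{X}
\]
already recorded just before Corollary \ref{lin:op:D(A):theo} transfers the strict accretivity assumed on $A$ to $A^{*}$.

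With $A$ closed, densely defined, strictly accretive, and $D(A)=D(A^{*})$, Corollary \ref{lin:op:D(A):theo} directly yields existence and uniqueness of $u\in X$ with $Au=f$ for every $f\in X$, together with continuity of $A^{-1}:X\to X$. There is no genuine obstacle here; the only thing to be a bit careful about is the verification that $D(A^{*})=X$, but this is standard for bounded everywhere-defined operators, so the proof is essentially a one-line invocation of the preceding corollary.
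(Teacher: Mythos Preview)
Your proposal is correct and follows exactly the approach indicated in the paper: the sentence preceding the corollary already notes that the domain assumption $D(A)=D(A^{*})$ of Corollary~\ref{lin:op:D(A):theo} is obviously satisfied when $A$ is continuous on all of $X$, and your argument simply spells this out.
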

Note that since continuous linear operators and continuous sesqui-linear
forms are equivalent, the last corollary is nothing but the so-called
Lax-Milgram theorem. Indeed, if $A:X\to X$ is in the space $L\left(X\right)$
of a continuous linear operators then 
\[
\left(u,v\right)\mapsto\left\langle u|Av\right\rangle _{X}
\]
is in turn a continuous sesqui-linear form on $X$, i.e. an element
of the space $S\left(X\right)$ of continuous sesqui-linear forms
on $X$, and conversely if $\beta\left\langle \:\cdot\:|\:\cdot\:\right\rangle \in S\left(X\right)$
then $\overline{\beta\left\langle \:\cdot\:|v\right\rangle }\in X^{*}$
and utilizing the unitary%
\footnote{Recall that for this we have to define the complex structure of $X^{*}$
accordingly as $\left(\alpha f\right)\left(x\right)\coloneqq\overline{\alpha}\, f\left(x\right)$
for every $x\in X$ and every continuous linear functional $f$ on
$X$.%
} Riesz map $R_{X}:X^{*}\to X$ we get via the Riesz representation
theorem $\beta\left\langle u|v\right\rangle =\left\langle u|A_{\beta}v\right\rangle _{X},$
where $A_{\beta}v\coloneqq R_{X}\overline{\beta\left\langle \:\cdot\:|v\right\rangle },\: v\in X,$
defines indeed a continuous linear operator on $X$. Moreover, 
\begin{align*}
S\left(X\right) & \to L\left(X\right)\\
\beta & \mapsto A_{\beta}
\end{align*}
is not only a bijection but also an isometry. Indeed, 
\[\left|\beta\right|_{S\left(X\right)}\coloneqq\sup_{x,y\in B_{X}\left(0,1\right)}\left|\beta\left(x,y\right)\right|=\sup_{x,y\in B_{X}\left(0,1\right)}\left|\left\langle x|A_{\beta}y\right\rangle _{X}\right|=\left\Vert A_{\beta}\right\Vert _{L\left(X\right)}.\]

Strict accretivity for the corresponding operator $A_{\beta}$ results
in the so-called \emph{coercitivity}%
\footnote{The strict positivity in (\ref{eq:strict:pos}) can be weakened to
requiring merely $\bigwedge_{u\in X}\left|\beta\left\langle u|u\right\rangle \right|\geq c\left\langle u|u\right\rangle _{X}$,
which yields in an analogous way a corresponding well-posedness result.
This option is used in some applications. %
} of the sesqui-linear form $\beta$:
\begin{equation}
\Re\beta\left\langle u|u\right\rangle \geq c\left\langle u|u\right\rangle _{X}\label{eq:coercive}
\end{equation}
for some $c\in\oi0\infty$ and all $u\in X$. Thus, as an equivalent
formulation of the previous corollary we get the following.
\begin{cor}[Lax-Milgram theorem]
\label{cor:Lax-Mil}Let $\beta\left\langle \:\cdot\:|\:\cdot\:\right\rangle $
be a continuous, coercive sesqui-linear form on a Hilbert space $X$.
Then for every $f\in X^{*}$ there is a unique $u\in X$ such that
\[
\beta\left\langle u|v\right\rangle =f\left(v\right)
\]
for all $v\in X$. 
\end{cor}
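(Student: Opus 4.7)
The plan is to reduce the statement to Corollary \ref{cont-lin-op-theo} via the isometric bijection $\beta\leftrightarrow A_{\beta}$ between $S(X)$ and $L(X)$ recorded just before the corollary, combined with the Riesz representation theorem to rewrite the data $f\in X^{*}$ as an element of $X$.

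First I would invoke Riesz representation on $f$ to obtain a unique $w\in X$ with $f(v)=\left\langle w|v\right\rangle _{X}$ for every $v\in X$, and $\left|w\right|_{X}=\left\Vert f\right\Vert _{X^{*}}$. Using the representation $\beta\left\langle u|v\right\rangle =\left\langle u|A_{\beta}v\right\rangle _{X}$, the equation $\beta\left\langle u|v\right\rangle =f(v)$ for all $v\in X$ becomes
\[
\left\langle u|A_{\beta}v\right\rangle _{X}=\left\langle w|v\right\rangle _{X}\quad(v\in X),
\]
which, by the very definition of the Hilbert space adjoint of the bounded operator $A_{\beta}$, is equivalent to the operator equation $A_{\beta}^{*}u=w$ in $X$.

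Next I would transfer the structural hypothesis on $\beta$ to $A_{\beta}^{*}$. Coercivity of $\beta$ is, by construction, strict accretivity of $A_{\beta}$; since $A_{\beta}\in L(X)$, the Hilbert space adjoint $A_{\beta}^{*}$ also belongs to $L(X)$, and the identity
\[
\Re\left\langle u|A_{\beta}^{*}u\right\rangle _{X}=\Re\left\langle A_{\beta}u|u\right\rangle _{X}=\Re\left\langle u|A_{\beta}u\right\rangle _{X}
\]
carries strict accretivity from $A_{\beta}$ over to $A_{\beta}^{*}$ with the same constant $c$. Applying Corollary \ref{cont-lin-op-theo} to $A_{\beta}^{*}$ then provides a unique $u\in X$ with $A_{\beta}^{*}u=w$, and the continuous dependence of $u$ on $w$, combined with the isometric identification $f\mapsto w$, yields continuous dependence of $u$ on $f$.

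No genuinely hard step arises: the substantive content of Lax-Milgram has already been absorbed into the bijection $\beta\mapsto A_{\beta}$ and into Corollary \ref{cont-lin-op-theo}, which in turn rests on Minty's Theorem \ref{thm:Minty}. The only point requiring attention is the convention bookkeeping enforced by the conjugate linearity of $\left\langle \cdot|\cdot\right\rangle _{X}$ in its first slot: the functional $v\mapsto\beta\left\langle u|v\right\rangle $ is represented by $A_{\beta}^{*}$ rather than $A_{\beta}$, which is why the adjoint intervenes in the reduction.
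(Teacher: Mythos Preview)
Your proof is correct and follows exactly the route the paper indicates: the corollary is presented there simply as an equivalent reformulation of Corollary~\ref{cont-lin-op-theo} via the isometric bijection $\beta\mapsto A_{\beta}$ and the Riesz map, without spelling out the details. Your explicit bookkeeping showing that the equation becomes $A_{\beta}^{*}u=w$ (rather than $A_{\beta}u=w$) because of the first-slot conjugate linearity is a welcome clarification that the paper leaves implicit.
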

Keeping in mind the latter approach has been utilized extensively
for elliptic type problems it may be interesting to note that its
generalization in the form of Corollary \ref{lin:op:theo} is perfectly
sufficient to solve elliptic, parabolic and hyperbolic systems in
a single approach. For further illustrating the Lax-Milgram theorem
in its abstract form, we discuss an example, which is related to the
sesqui-linear forms method.
\begin{example}
\label{ex:Lax-Milgram}Let $H_{0}$, $H_{1}$ be Hilbert spaces. Denote
by $H_{-1}$ the dual of $H_{1}$ and let $R_{H_{1}}\colon H_{-1}\to H_{1}$
be the corresponding Riesz-isomorphism. Consider a continuous linear
bijection $C:H_{1}\to H_{0}$ and a continuous linear operator $A:H_{0}\to H_{0}$
with
\[
\Re\left\langle x|Ax\right\rangle _{H_{0}}\geq\alpha_{0}\left\langle x|x\right\rangle _{H_{0}}\quad(x\in H_{0})
\]
for some $\alpha_{0}\in\mathbb{R}_{>0}.$ Denoting
\[
C^{\diamond}:H_{0}\to H_{-1},y\mapsto C^{\diamond}y\coloneqq\left\langle y|C\:\cdot\:\right\rangle _{H_0},
\]
we consider
\[
C^{\diamond}AC:H_{1}\to H_{-1}.
\]
Now, from 
\[
R_{H_{1}}C^{\diamond}AC=C^{*}AC,
\]
we read off that $C^{\diamond}AC$ is an isomorphism. This may also
be seen as an application of the Lax-Milgram theorem, since the equation
\[
C^{\diamond}ACw=f,
\]
for given $f\in H_{-1}$ amounts to be equivalent to the discussion
of the sesqui-linear form 
\begin{eqnarray*}
(v,w)\mapsto\beta\left\langle v|w\right\rangle  & \coloneqq & \left\langle ACv|Cw\right\rangle _{H_0}=\left(C^{\diamond}ACv\right)\left(w\right)
\end{eqnarray*}
similar to the way it was done in the above. %

\end{example}
In order to establish a solution theory for elliptic type equations,
it is possible to go a step further. For stating an adapted well-posedness
theorem we recall the following. Let $G:D(G)\subseteq H_{1}\to H_{2}$
be a densely defined closed linear operator with closed range $R(G)=G[H_{1}]$.
Then, the operator $B_{G}\colon D(G)\cap N(G)^{\bot}\subseteq N(G)^{\bot}\to R(G),x\mapsto Gx$,
where $N(G)=[\{0\}]G$ denotes the null-space of $G$, is continuously
invertible as it is one-to-one, onto and closed. Consequently, the
modulus $\left|B_{G}\right|$ of $B_{G}$ is continuously invertible
on $N(G)^{\bot}$. We denote by $H_{1}(\left|B_{G}\right|)$ the domain
of $\left|B_{G}\right|$ endowed with the norm $\left|\left|B_{G}\right|\cdot\right|_{H_{1}},$
which can be shown to be equivalent to the graph norm of $\left|B_{G}\right|$.
We denote by $H_{-1}(\left|B_{G}\right|)$ the dual of $H_{1}(\left|B_{G}\right|)$
with the pivot space $H_{0}(\left|B_{G}\right|)\coloneqq N(G)^{\bot}$%
\footnote{We use $H_{1}(A)$ also as a notation for the graph space of some
continuously invertible operator $A$ endowed with the norm $\left|A\cdot\right|$.
Similarly, we write $H_{-1}(A)$ for the respective dual space with
the pivot space $\overline{D(A)}$.%
}. It is possible to show that the range of $G^{*}$ is closed as well.
Thus, the above reasoning also applies to $G^{*}$ in the place of
$G$. \\
Moreover, the operators $B_{G}$ and $B_{G}^{\diamond}$, defined
as in Example \ref{ex:Lax-Milgram}, are unitary transformations from
$H_{1}(\left|B_{G}\right|)$ to $H_{0}(\left|B_{G^{*}}\right|)$ and
from $H_{0}(\left|B_{G^{*}}\right|)$ to $H_{-1}(\left|B_{G}\right|),$
respectively. Moreover, note that $B_{G}^{\diamond}$ is the continuous
extension of $B_{G^{*}}(=B_{G}^{*})$. The abstract result asserting
a solution theory for homogeneous elliptic boundary value problems
reads as follows.
\begin{thm}[{\cite[Theorem 3.1.1]{TrostorffWaurick2012_Elliptic}}]
\label{thm: STHE} Let $H_{1},H_{2}$ be Hilbert spaces and let $G:D(G)\subseteq H_{1}\to H_{2}$
be a densely defined closed linear operator, such that $R(G)\subseteq H_{2}$
is closed. Let $a\subseteq R(G)\oplus R(G)$ such that $a^{-1}:R(G)\to R(G)$
is Lipschitz-continuous. Then for all $f\in H_{-1}(\left|B_{G}\right|)$
there exists a unique $u\in H_{1}(\left|B_{G}\right|)$ such that
the following inclusion holds 
\[
(u,f)\in G^{\diamond}aG\coloneqq\left\{ (x,z)\in H_{1}(|G|+\ii)\times H_{-1}(|G|+\ii)\,|\,\bigvee_{y\in H_{0}(|G|+\ii)}(Gx,y)\in a\wedge z=G^{\diamond}y\right\} .
\]
Moreover, the solution $u$ depends Lipschitz-continuously on the
right-hand side with Lipschitz constant $|a^{-1}|_{\mathrm{Lip}}$
denoting the smallest Lipschitz-constant of $a^{-1}$. \\
In other words, the relation $(B_{G}^{\diamond}aB_{G})^{-1}\subseteq H_{-1}(\left|B_{G}\right|)\oplus H_{1}(\left|B_{G}\right|)$
defines a Lipschitz-continuous mapping with $\left|(B_{G}^{\diamond}aB_{G})^{-1}\right|_{\textnormal{Lip}}=|a^{-1}|_{\mathrm{Lip}}$. \end{thm}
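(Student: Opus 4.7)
The plan is to lift the inversion problem $(u,f)\in G^{\diamond}aG$ in the distributional space $H_{-1}(|B_G|)$ to a Lipschitz inversion problem inside the Hilbert space $R(G)=H_0(|B_{G^*}|)$, using the unitary transformations $B_G\colon H_1(|B_G|)\to H_0(|B_{G^*}|)$ and $B_G^{\diamond}\colon H_0(|B_{G^*}|)\to H_{-1}(|B_G|)$ whose existence is asserted in the preamble of the theorem. Once this lift is set up, the theorem follows by direct manipulation of these isometries together with the Lipschitz property of $a^{-1}$.

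For existence, given $f\in H_{-1}(|B_G|)$ I would first set $y\coloneqq(B_G^{\diamond})^{-1}f\in H_0(|B_{G^*}|)=R(G)$. By the hypothesis that $a^{-1}$ is Lipschitz from $R(G)$ to $R(G)$, there is a unique $v=a^{-1}(y)\in R(G)$, so that $(v,y)\in a$. I would then define $u\coloneqq B_G^{-1}v\in H_1(|B_G|)$; since this places $u$ in $D(G)\cap N(G)^{\bot}$ with $Gu=B_G u=v$, the pair $(Gu,y)=(v,y)$ lies in $a$, and $G^{\diamond}y=B_G^{\diamond}y=f$. As $y\in H_0(|B_{G^*}|)\subseteq H_0(|G|+\ii)$, it serves as the witness required in the definition of $G^{\diamond}aG$, and hence $(u,f)\in G^{\diamond}aG$.

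Uniqueness would proceed in reverse. If $(u',f)\in G^{\diamond}aG$ with $u'\in H_1(|B_G|)$, then some witness $y'\in H_0(|G|+\ii)$ satisfies $(Gu',y')\in a$ and $f=G^{\diamond}y'$. The inclusion $a\subseteq R(G)\oplus R(G)$ places $y'\in R(G)$, and applying $(B_G^{\diamond})^{-1}$ to the identity $f=B_G^{\diamond}y'$ forces $y'=y$, whence $Gu'=a^{-1}(y)=v$; injectivity of $B_G$ on $N(G)^{\bot}$ then gives $u'=u$. The Lipschitz estimate is immediate from the norm identities $|u_1-u_2|_{H_1(|B_G|)}=|v_1-v_2|_{R(G)}$ and $|f_1-f_2|_{H_{-1}(|B_G|)}=|y_1-y_2|_{R(G)}$ supplied by the isometries, which reduce the Lipschitz constant of $(B_G^{\diamond}aB_G)^{-1}$ exactly to that of $a^{-1}$; the reverse inequality is obtained by inserting arbitrary pairs drawn from $a$ into the same estimate, yielding the stated equality $|(B_G^{\diamond}aB_G)^{-1}|_{\mathrm{Lip}}=|a^{-1}|_{\mathrm{Lip}}$.

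The main obstacle does not lie in the argument above, which is essentially formal bookkeeping, but in the preparatory material invoked from the preamble: the claim that $B_G$ and $B_G^{\diamond}$ are genuine unitary bijections between the scale $H_1(|B_G|)$, $H_0(|B_{G^*}|)$, $H_{-1}(|B_G|)$, and that $B_G^{\diamond}$ is the continuous extension of $B_{G^*}=B_G^*$. This in turn rests on the closed range hypothesis for $G$, the automatic closedness of $R(G^*)$, and a careful identification of the dualities involved. Once these structural identifications are granted, the theorem collapses to the construction and inversion outlined above.
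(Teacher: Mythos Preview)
Your proof is correct and follows essentially the same approach as the paper: both reduce the problem to the identity $(B_G^{\diamond}aB_G)^{-1}=B_G^{-1}a^{-1}(B_G^{\diamond})^{-1}$ via the unitarity of $B_G$ and $B_G^{\diamond}$, with your version simply spelling out the existence, uniqueness, and Lipschitz steps element by element rather than stating the relational identity directly. Your observation that the substantive content lies in the preparatory unitarity claims from the preamble is also in line with the paper, which takes these as established and gives only the short formal argument you have reproduced.
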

\begin{proof}
It is easy to see that $(u,f)\in G^{\diamond}aG$ for $u\in H_1(|B_G|)$ and $f\in H_{-1}(|B_G|)$ if and only if $(u,f)\in B_{G}^{\diamond}aB_{G}$.
Hence, the assertion follows from $(B_{G}^{\diamond}aB_{G})^{-1}=B_{G}^{-1}a^{-1}(B_{G}^{\diamond})^{-1}$,
the unitarity of $B_{G}$ and $B_{G}^{\diamond}$, and the fact that
$a^{-1}$ is Lipschitz-continuous on $R(G)$. 
\end{proof}
To illustrate the latter result, we give an example.
\begin{defn}
\label{def:div_grad}Let $\Omega\subseteq\mathbb{R}^{n}$ open. We
define 
\begin{align*}
\widetilde{\dive}_{c}\colon\, C_{\infty,c}(\Omega)^{n}\subseteq\bigoplus_{k=1}^{n}L^{2}(\Omega) & \to L^{2}(\Omega)\\
\phi=(\phi_{1},\ldots,\phi_{n}) & \mapsto\sum_{k=1}^{n}\partial_{k}\phi_{k},
\end{align*}
 where $\partial_{k}$ denotes the derivative with respect to the
$k$'th variable ($k\in\{1,\ldots,n\}$) and $C_{\infty,c}(\Omega)$
is the space of arbitrarily differentiable functions with compact
support in $\Omega$. Furthermore, define 
\begin{align*}
\widetilde{\grad}_{c}\colon\, C_{\infty,c}(\Omega)\subseteq L^{2}(\Omega) & \to\bigoplus_{k=1}^{n}L^{2}(\Omega)\\
\phi & \mapsto(\partial_{1}\phi,\ldots,\partial_{n}\phi).
\end{align*}
Integration by parts gives $\widetilde{\dive}_{c}\subseteq-\left(\widetilde{\grad}_{c}\right)^{\ast}$
and consequently $\widetilde{\grad}_{c}\subseteq-\left(\widetilde{\dive}_{c}\right)^{\ast}.$
We set $\dive\coloneqq-\left(\widetilde{\grad}_{c}\right)^{*}$, $\grad\coloneqq-\left(\widetilde{\dive}_{c}\right)^{*}$,
$\dive_{c}\coloneqq-\grad^{*}$ and $\grad_{c}\coloneqq-\dive^{*}$.\\
In the particular case $n=1$ we set $\partial_{1,c}\coloneqq\grad_{c}=\dive_{c}$
and $\partial_{1}\coloneqq\grad=\dive.$ 
\end{defn}
With the latter operators, in order to apply the solution theory above,
one needs to impose certain geometric conditions on the open set $\Omega$.
Indeed, the above theorem applies to $\grad$ or $\grad_{c}$ in the
place of $G$ for the homogeneous Neumann and Dirichlet case, respectively
(in this case $G^{\diamond}$ is then the canonical extension of $-\dive_{c}$
and $-\dive$, respectively). The only thing that has to be guaranteed
is the closedness of the range of $\grad$ ($\grad_{c}$, resp.). This
in turn can be warranted, e.g., if $\Omega$ is bounded, connected
and satisfies the segment property for the Neumann case or if $\Omega$
is bounded in one direction for the Dirichlet case. In both cases,
one can prove the Poincar{\'e} inequality, which especially implies
the closedness of the corresponding ranges (see e.g. \cite[Satz 7.6, p. 120]{Wloka1982}
and \cite[Theorem 3.8, p. 24]{agmon2010lectures} for the Poincar{\'e}
inequality for the Dirichlet case and Rellich's theorem for the Neumann
case, respectively. Note that if the domain of the gradient endowed
with the graph norm is compactly embedded into the underlying space,
a Poincar{\'e} type estimate can be derived by a contradiction argument.).

In the remainder of this section, we discuss an elliptic-type problem
in one dimension with indefinite coefficients. A similar result in
two dimensions can be found in \cite{Dauge1999}.
\begin{example}
\label{ex:elliptic}%
Let $\Omega\coloneqq\left[-\frac{1}{2},\frac{1}{2}\right]$ and set
\[
\tilde{a}(x)\coloneqq\begin{cases}
\alpha & x\geq0,\\
\beta & x<0
\end{cases}\quad\left(x\in\left[-\frac{1}{2},\frac{1}{2}\right]\right)
\]
for some $\alpha,\beta\in\mathbb{R}\setminus\{0\}$. We denote the
corresponding multiplication-operator on $L^{2}\left(\left[-\frac{1}{2},\frac{1}{2}\right]\right)$
by $\tilde{a}(\mathrm{m})$ and consider the following equation in
divergence-form 
\begin{equation}
-\partial_{1}\tilde{a}(\mathrm{m})\partial_{1,c}u=f\label{eq:elliptic}
\end{equation}
for some $f\in H_{-1}(|\partial_{1,c}|).$ Clearly, $R(\partial_{1,c})$
is closed and we denote the canonical embedding from $R(\partial_{1,c})$
into $L^{2}\left(\left[-\frac{1}{2},\frac{1}{2}\right]\right)$ by
$\iota_{R(\partial_{1,c})}.$ Then $\iota_{R(\partial_{1,c})}^{\ast}:L^{2}\left(\left[-\frac{1}{2},\frac{1}{2}\right]\right)\to R(\partial_{1,c})$
is the orthogonal projection onto $R(\partial_{1,c})$ (see e.g. \cite[Lemma 3.2]{Picard2013_fractional})
and we can rewrite (\ref{eq:elliptic}) as 
\[
-\partial_{1}\iota_{R(\partial_{1,c})} \iota_{R(\partial_{1,c})}^{*} \tilde{a}(\mathrm{m})\iota_{R(\partial_{1,c})}\iota_{R(\partial_{1,c})}^{*} \partial_{1,c}u=f,
\]
where we have used that $\partial_1$ vanishes on $R(\partial_{1,c})^\bot$. Hence, we are in the setting of Theorem \ref{thm: STHE}, where $a=\iota_{R(\partial_{1,c})}^{\ast}\tilde{a}(\mathrm{m})\iota_{R(\partial_{1,c})}:R(\partial_{1,c})\to R(\partial_{1,c}).$
The only thing we have to show is that $a^{-1}$ defines a Lipschitz-continuous
mapping on $R(\partial_{1,c}).$ As $R(\partial_{1,c})$ is closed,
it follows that 
\[
R(\partial_{1,c})=N(\partial_{1})^{\bot}=\left\{ \mathbf{1}\right\} ^{\bot},
\]
where $\mathbf{1}$ denotes the constant function $\mathbf{1}(x)=1$
for $x\in\left[-\frac{1}{2},\frac{1}{2}\right].$ To show that $a$
is invertible, we have to solve the problem 
\[
a\varphi=\psi
\]
for given $\psi\in\left\{ \mathbf{1}\right\} ^{\bot}.$ The latter
can be written as 
\[
\psi=a\varphi=\tilde{a}(\mathrm{m})\varphi-\langle\mathbf{1}|\tilde{a}(\mathrm{m})\varphi\rangle\mathbf{1}.
\]
As $\tilde{a}(\mathrm{m})$ is continuously invertible (since $\alpha,\beta\ne0$)
we derive 
\[
\varphi=\tilde{a}(\mathrm{m})^{-1}\psi+\langle\mathbf{1}|\tilde{a}(\mathrm{m})\varphi\rangle\tilde{a}(\mathrm{m})^{-1}\mathbf{1}.
\]
Since $\varphi\in\{\mathbf{1}\}^{\bot}$ we obtain 
\[
0=\langle\mathbf{1}|\tilde{a}(\mathrm{m})^{-1}\psi\rangle+\langle\mathbf{1}|\tilde{a}(\mathrm{m})\varphi\rangle\langle\mathbf{1}|\tilde{a}(\mathrm{m})^{-1}\mathbf{1}\rangle,
\]
yielding 
\[
\langle\mathbf{1}|\tilde{a}(\mathrm{m})\varphi\rangle=-\frac{\langle\mathbf{1}|\tilde{a}(\mathrm{m})^{-1}\psi\rangle}{\langle\mathbf{1}|\tilde{a}(\mathrm{m})^{-1}\mathbf{1}\rangle},
\]
providing that $\langle\mathbf{1}|\tilde{a}(\mathrm{m})^{-1}\mathbf{1}\rangle\ne0.$
The latter holds if and only if $\alpha\ne-\beta.$ Thus, assuming
that $\alpha\ne-\beta$ we get 
\[
a^{-1}\psi=\tilde{a}(\mathrm{m})^{-1}\psi-\frac{\langle\mathbf{1}|\tilde{a}(\mathrm{m})^{-1}\psi\rangle}{\langle\mathbf{1}|\tilde{a}(\mathrm{m})^{-1}\mathbf{1}\rangle}\tilde{a}(\mathrm{m})^{-1}\mathbf{1},
\]
which clearly defines a Lipschitz-continuous mapping. Summarizing,
if $\alpha,\beta\ne0$ and $\alpha\ne-\beta,$ then for each $f\in H_{-1}(|\partial_{1,c}|)$
there exists a unique $u\in H_{1}(|\partial_{1,c}|)$ satisfying (\ref{eq:elliptic}). \end{example}
\begin{rem} (a)
If (\ref{eq:elliptic}) is replaced by the problem with homogeneous
Neumann-boundary conditions, then the constraint $\alpha\ne-\beta$
can be dropped, since in this case $R(\partial_{1})=N(\partial_{1,c})^{\bot}=L_{2}\left(\left[-\frac{1}{2},\frac{1}{2}\right]\right)$,
and thus, $a$ is invertible if $\tilde{a}(\mathrm{m})$ is invertible.

(b) Of course in view of Theorem \ref{thm: STHE}, the coefficient $a$ in Example \ref{ex:elliptic} may also be induced by a relation such that its inverse relation is a (nonlinear) Lipschitz continuous mapping in $R(\partial_{1,c})$.
\end{rem}

\section{Linear Evolutionary Equations and Strict Positivity\label{sec:Linear-Evolutionary-Equations}}

In this section we shall discuss equations of the form
\begin{equation}
\left(\partial_{0,\nu}\mathcal{M}+\mathcal{A}\right)U=F,\label{eq:gen_evo}
\end{equation}
where $\partial_{0,\nu}$ is the \emph{time-derivative operator} to
be introduced and specified below, $\mathcal{M}$ and $\mathcal{A}$
are linear operators, the former -- the\emph{ material law operator}
-- being bounded, and the latter being possibly unbounded. The task
is in finding the unknown $U$ for a given right hand side $F$. This
is done by showing that both (the closure of) $\left(\partial_{0,\nu}\mathcal{M}+\mathcal{A}\right)$
and $\left(\partial_{0,\nu}\mathcal{M}+\mathcal{A}\right)^{*}$ are
strictly accretive operators in a suitable Hilbert space and then
using Corollary \ref{lin:op:theo}. We will comment on the specific
assumptions on $\mathcal{M}$ and $\mathcal{A}$ in the subsequent
sections as well as on the rigorous (Hilbert space) framework the
equation (\ref{eq:gen_evo}) should be considered in. Before we discuss
the abstract theory, we give four elementary guiding examples which
shall lead us through the development of the abstract theory.

\subsection{Guiding examples\label{sub:Leading-Examples}}

\subsubsection*{Ordinary (integro-)differential equations}

We shall consider the following easy form of an ordinary differential
equation. For a given right hand side $f\in C_{c}(\mathbb{R}\times\mathbb{R})$,
i.e. $f$ is a continuous function on $\mathbb{R}\times\mathbb{R}$
with compact support, and a coefficient $a\in L^{\infty}(\mathbb{R})$
we consider the problem of finding $u$ in a suitable Hilbert space
such that for (a.e.) $(t,x)\in\mathbb{R}\times\mathbb{R}$ the equation
\[
u(\cdot,x)'(t)+a(x)u(t,x)=f(t,x)
\]
holds. We will also have the opportunity to consider an integro-differential
equation of the form
\[
u(\cdot,x)'(t)+a(x)u(t,x)+\int_{-\infty}^{t}k(t-s)u(s,x)ds=f(t,x).
\]
for a suitable kernel $k\colon\mathbb{R}\to\mathbb{R}$.

\subsubsection*{The heat equation}

The heat $\theta$ in a given body $\Omega\subseteq\mathbb{R}^{n}$
can be described by the conservation law
\[
\theta(\cdot,x)'(t)+\dive q(t,x)=f(t,x)\quad((t,x)\in\mathbb{R}\times\Omega\mbox{ a.e.}),
\]
where $f$ is a given heat source and $q$ is the heat flux given
by Fourier's law as follows
\[
q(t,x)=-k(x)\grad\theta(t,x)\quad((t,x)\in\mathbb{R}\times\Omega\mbox{ a.e.}),
\]
where $k$ is a certain coefficient matrix describing the specific
conductivities of the underlying material varying over $\Omega.$
Here $\dive$ and $\grad$ are the canonical extensions of the spatial
operators $\dive$ and $\grad$ defined in the previous section (Definition
\ref{def:div_grad}) to the space $L^{2}(\mathbb{R}\times\Omega,\mu\otimes\lambda)=L^{2}(\mathbb{R},\mu)\otimes L^{2}(\Omega,\lambda)$,
where $\mu$ is a Borel-measure on $\mathbb{R}$ and $\lambda$ denotes
the $n$-dimensional Lebesgue-measure, i.e. 
\begin{align*}
\dive q(t,x) & =\left(\dive q(t,\cdot)\right)(x)\\
\grad\theta(t,x) & =\left(\grad\theta(t,\cdot)\right)(x)\quad((t,x)\in\mathbb{R}\times\Omega\mbox{ a.e.}).
\end{align*}
In a block operator matrix form, recalling that $\partial_{0}$ denotes
the derivative with respect to time, we get 
\[
\left(\partial_{0}\left(\begin{array}{cc}
1 & 0\\
0 & 0
\end{array}\right)+\left(\begin{array}{cc}
0 & 0\\
0 & k^{-1}
\end{array}\right)+\left(\begin{array}{cc}
0 & \dive\\
\grad & 0
\end{array}\right)\right)\left(\begin{array}{c}
\theta\\
q
\end{array}\right)=\left(\begin{array}{c}
f\\
0
\end{array}\right),
\]
assuming that the coefficient $k$ is invertible. Imposing suitable
assumptions on data and coefficients and boundary conditions for the
operators $\dive$ and/or $\grad$ will be seen to warrant well-posedness
of the resulting system. We will comment on the precise details in
our discussion of abstract well-posedness results.

\subsubsection*{The elastic equations}

In the theory of elasticity, the open set $\Omega\subseteq\mathbb{R}^{n}$,
being the underlying domain, models a body in its non-deformed state
(of course, in applications $n=3$). The displacement field $u$ assigns
to each space-time coordinate $(t,x)\in\mathbb{R}\times\Omega$ direction
and size of the displacement at time $t$ of the material point at
position $x$. The displacement field $u$ satisfies the balance of
momentum equation (again writing $\partial_{0}$ for the time-derivative)
\[
\partial_{0}^{2}u-\Dive\sigma=f,
\]
with $f$ being an external forcing term, $\sigma$ being the (symmetric)
stress tensor and $\Dive$ being the row-wise (distributional) divergence
acting on suitable elements in the space $H_{\textnormal{sym}}(\Omega)$
of symmetric $n\times n$ matrices of $L^{2}(\Omega)$-functions as
an operator from $H_{\textnormal{sym}}(\Omega)$ to $L^{2}(\Omega)^{n}$
with maximal domain%
\footnote{The precise definition will be given later.%
}. Endowing $H_{\textnormal{sym}}(\Omega)$ with the Frobenius inner
product, we get that the negative adjoint of $\Dive$ is the symmetrized
gradient or strain tensor given by 
\[
\epsilon(u)\coloneqq\Grad u\coloneqq\frac{1}{2}\left(\partial_{i}u_{j}+\partial_{j}u_{i}\right)_{i,j}
\]
with Dirichlet boundary conditions as induced constraint on the domain.
Neumann boundary conditions can be modeled similarly. The stress tensor
satisfies the constitutive relation involving the elasticity tensor
$C$ in the way that
\[
\sigma=C\epsilon(u).
\]
Introducing the displacement velocity $v\coloneqq\partial_{0}u$ as
a new unknown, we write the elastic equations formally as the following
block operator matrix equation
\[
\left(\partial_{0}\left(\begin{array}{cc}
1 & 0\\
0 & C^{-1}
\end{array}\right)-\left(\begin{array}{cc}
0 & \Dive\\
\Grad & 0
\end{array}\right)\right)\left(\begin{array}{c}
v\\
\sigma
\end{array}\right)=\left(\begin{array}{c}
f\\
0
\end{array}\right),
\]
where we assume that $C$ is invertible.

\subsubsection*{The Maxwell's equations}

The equations for electro-magnetic theory describe evolution of the
electro-magnetic field $(E,H)$ in a $3$-dimensional open set $\Omega$.
As Gauss' law can be incorporated by a suitable choice of initial
data, we think of Maxwell's equations as Faraday's law of induction
(the Maxwell-Faraday equation), which reads as 
\[
\partial_{0}B+\curl_{c}E=0,
\]
where $\curl_{c}$ denotes the (distributional) $\curl$ operator
in $L^{2}(\Omega)^{3}$ with the electric boundary condition of vanishing
tangential components. The magnetic field $B$ satisfies the constitutive
equation
\[
B=\mu H,
\]
where $\mu$ is the magnetic permeability. Faraday's law is complemented
by Ampere's law
\[
\partial_{0}D+J_{c}-\curl H=J_{0}
\]
for $J_{0}$, $D$, $J_{c}$ being the external currents, the electric
displacement and the charge, respectively. The latter two quantities
satisfy the two equations
\begin{align*}
D & =\epsilon E,\text{ and }\\
J_{c} & =\sigma E.
\end{align*}
The former is a constitutive equation involving the dielectricity
$\epsilon$ and the latter is Ohm's law with conductivity $\sigma$.
Plugging the constitutive relations and Ohm's law into Faraday's law
of induction and Ampere's law and arranging them in a block operator
matrix equation, we arrive at 
\[
\left(\partial_{0}\left(\begin{array}{cc}
\epsilon & 0\\
0 & \mu
\end{array}\right)+\left(\begin{array}{cc}
\sigma & 0\\
0 & 0
\end{array}\right)+\left(\begin{array}{cc}
0 & -\curl\\
\curl_{c} & 0
\end{array}\right)\right)\left(\begin{array}{c}
E\\
H
\end{array}\right)=\left(\begin{array}{c}
J_{0}\\
0
\end{array}\right).
\]

Having these examples in mind, we develop the abstract theory a bit
further and discuss the time-derivative operator in the next section.
After having done so, we aim at giving a unified solution theory for
all of the latter examples. In fact we show that all of these equations
are of the general form (\ref{eq:gen_evo}).

\subsection{The time-derivative\label{sub:The-Time-Derivative}}

When considering evolutionary equations, we need a distinguished direction
of time. Anticipating this fact, we define a time-derivative $\partial_{0,\nu}$
as an operator in a weighted Hilbert space. 

Beforehand, we recall some well-known facts from the (time-)derivative
in the unweighted space $L^{2}(\mathbb{R}).$ We denote the Sobolev
space of $L^{2}(\mathbb{R})$-functions $f$ with distributional derivative
$f'$ representable as a $L^{2}(\mathbb{R})$-function by $H_{1}(\mathbb{R})$.
Then, the operator 
\[
\partial\colon H_{1}(\mathbb{R})\subseteq L^{2}(\mathbb{R})\to L^{2}(\mathbb{R}),f\mapsto f'
\]
is skew-selfadjoint. Indeed, using that the space $C_{\infty,c}(\mathbb{R})$
is a core for $\partial$, we immediately verify with integration
by parts that $\partial$ is skew-symmetric. With the help of some
elementary computations it is possible to show that the range of both
the operators $\partial+1$ and $\partial-1$ contains $C_{\infty,c}(\mathbb{R})$.
The closedness of $\partial$ thus implies the skew-selfadjointness
of $\partial$ (see also \cite[Example 3.14]{kato1995perturbation}).
Moreover, it is well-known that $\partial$ admits an explicit spectral
representation given by the Fourier transform $\mathcal{F}$, being
the unitary extension on $L^{2}(\mathbb{R})$ of the mapping given
by
\[
\mathcal{F}\phi(\xi)\coloneqq\frac{1}{\sqrt{2\pi}}\int_{\mathbb{R}}e^{-\ii\xi x}\phi(x)dx\quad(\xi\in\mathbb{R})
\]
for $\phi\in C_{\infty,c}(\mathbb{R})$. Denoting by $\mathrm{m}\colon D(\mathrm{m})\subseteq L^{2}(\mathbb{R})\to L^{2}(\mathbb{R})$
the multiplication-by-argument operator given by 
\[
\mathrm{m}f\coloneqq(\xi\mapsto\xi f(\xi))
\]
for $f$ belonging to the maximal domain $D(\mathrm{m})$ of $\mathrm{m}$,
we find the following unitary equivalence of differentiation and multiplication
(see \cite[Volume 1, p. 161-163]{Akhiezer_Glazman_1993}):
\[
\partial=\mathcal{F}^{*}\ii\mathrm{m}\mathcal{F},
\]
which, due to the selfadjointness of $\mathrm{m}$, confirms the skew-selfadjointness
of $\partial.$ 

As mentioned above, in evolutionary processes there is a particular
bias for the forward time direction. As $L^{2}(\mathbb{R})$ has no
such bias, we choose a suitable weight, which serves to express this
bias. For $\nu\in\mathbb{R}$ we let $L_{\nu}^{2}(\mathbb{R})\coloneqq\left\{ f\in L^{2,\mathrm{loc}}\left(\mathbb{R}\right)|\:\int_{\mathbb{R}}\left|f\left(t\right)\right|^{2}\exp\left(-2\nu t\right)\: dt<\infty\right\} $,
the Hilbert space of (equivalence classes of) functions with $\left(t\mapsto\exp\left(-\nu t\right)\: f\left(t\right)\right)\in L^{2}\left(\mathbb{R}\right)$
(with the obvious norm). In particular, $L_{0}^{2}(\mathbb{R})=L^{2}(\mathbb{R})$.
Moreover, it is easily seen that the mapping
\[
e^{-\nu\mathrm{m}}\colon L_{\nu}^{2}(\mathbb{R})\to L^{2}(\mathbb{R}),f\mapsto\left(t\mapsto e^{-\nu t}f(t)\right)
\]
defines a unitary mapping. To carry differentiation
over to the exponentially weighted $L^{2}$-spaces, we observe that
for $\phi\in C_{\infty,c}(\mathbb{R})$ we have
\begin{align*}
\left(e^{-\nu\mathrm{m}}\right)^{-1}\partial e^{-\nu\mathrm{m}}\phi & =\left(e^{-\nu\mathrm{m}}\right)^{-1}(-\nu e^{-\nu\mathrm{m}}\phi+e^{-\nu\mathrm{m}}\phi')\\
 & =-\nu\phi+\phi',
\end{align*}
or 
\[
\left(e^{-\nu\mathrm{m}}\right)^{-1}\left(\partial+\nu\right)e^{-\nu\mathrm{m}}\phi=\phi'.
\]
Defining $\partial_{0,\nu}\coloneqq\left(e^{-\nu\mathrm{m}}\right)^{-1}\left(\partial+\nu\right)e^{-\nu\mathrm{m}}=\left(e^{-\nu\mathrm{m}}\right)^{-1}\partial e^{-\nu\mathrm{m}}+\nu$,
we read off that $\partial_{0,\nu}$ is a realization of the (distributional)
derivative operator in the weighted space $L_{\nu}^{2}(\mathbb{R})$.
Moreover, we see that $\partial_{0,\nu}$ is a normal operator, i.e.
$\partial_{0,\nu}$ and $\partial_{0,\nu}^{\ast}$ commute. In particular,
$\Re\partial_{0,\nu}=\frac{1}{2}\overline{\left(\partial_{0,\nu}+\partial_{0,\nu}^{\ast}\right)}=\nu$,
due to the skew-selfadjointness of $\partial$. This also shows that
$\partial_{0,\nu}$ is continuously invertible if $\nu\neq0$. Indeed,
we find the following explicit formula for the inverse: For $t\in\mathbb{R},$
$\nu\in\mathbb{R}\setminus\{0\}$, $f\in L_{\nu}^{2}(\mathbb{R})$
we have
\[
\partial_{0,\nu}^{-1}f(t)=\begin{cases}
\int_{-\infty}^{t}f(\tau)d\tau, & \nu>0,\\
-\int_{t}^{\infty}f(\tau)d\tau, & \nu<0.
\end{cases}
\]
For positive $\nu,$ the latter formula also shows that the values
of $\partial_{0,\nu}^{-1}f$ at time $t$ only depend on the values
of $f$ up to time $t$. This is the nucleus of the notion of causality,
where the sign of $\nu$ switches the forward and backward time direction.
Later, we will comment on this in more detail. 

The spectral representation of $\partial$ induces a spectral representation
for $\partial_{0,\nu}$. Indeed, introducing the Fourier-Laplace transformation%
\footnote{For the classical Fourier-Laplace transformation, also known as two-sided
Laplace transformation, this unitary character is rarely invoked.
In fact, it is mostly considered as an integral expression acting
on suitably integrable functions, whereas the unitary Fourier-Laplace
transformation is continuously extended (thus, of course, including
also some non-integrable functions). %
} $\mathcal{L}_{\nu}\coloneqq\mathcal{F}e^{-\nu\mathrm{m}}:L_{\nu}^{2}(\mathbb{R})\to L^{2}(\mathbb{R})$
for $\nu\in\mathbb{R},$ which itself is a unitary operator as a composition
of unitary operators, we get that
\[
\partial_{0,\nu}=\mathcal{L}_{\nu}^{*}(\ii\mathrm{m}+\nu)\mathcal{L}_{\nu}.
\]
This shows that for $\nu\not=0$ the normal operator $\partial_{0,\nu}^{-1}$
is unitarily equivalent to the multiplication operator%
\footnote{In the sense of the induced functional calculus we have
\[
\left(\ii\mathrm{m}+\nu\right)^{-1}=\frac{1}{\ii\mathrm{m}+\nu}.
\]
} $\left(\ii\mathrm{m}+\nu\right)^{-1}$ with spectrum $\sigma(\partial_{0,\nu}^{-1})=\partial B(\frac{1}{2\nu},\frac{1}{2\nu})$,
where $\nu>0$ and $B(a,r)\coloneqq\{z\in\mathbb{C}||z-a|<r\},$ $a\in\mathbb{C},$
$r>0$. We will use this fact in the next section by establishing
a functional calculus for $\partial_{0,\nu}^{-1}$. Henceforth, if
not otherwise stated, the parameter $\nu$ will always be a positive
real number.

\subsection{The autonomous case \label{sub:evo_eq_auto}}

In order to formulate the Hilbert space framework of (\ref{eq:gen_evo})
properly, we need to consider the space of $H$-valued $L_{\nu}^{2}$-functions.
Consequently, we need to invoke the (canonical) extension of $\partial_{0,\nu}$
to $L_{\nu}^{2}(\mathbb{R},H)$ for some Hilbert space $H$. For convenience,
we re-use the notation for the respective extension since there is
no risk of confusion. Moreover, we shall do so for the Fourier-Laplace
transform $\mathcal{L}_{\nu}$ which is then understood as a unitary
operator from $L_{\nu}^{2}(\mathbb{R},H)$ onto $L^{2}(\mathbb{R},H)$. 

In this section we treat a particular example for the choice of the
operators $\mathcal{M}$ and $\mathcal{A}$, namely that of autonomous
operators. For this we need to introduce the operator of time translation:
For $h\in\mathbb{R}$, $\nu\in\mathbb{R}$ we define the \emph{time
translation operator} $\tau_{h}$ on $L_{\nu}^{2}(\mathbb{R})$ by
$\tau_{h}f\coloneqq f(\cdot+h)$. Again, we identify $\tau_{h}$ with
its extension to the $H$-valued case.
\begin{defn}
We call an operator $B\colon D(B)\subseteq L_{\nu}^{2}(\mathbb{R},H)\to L_{\nu}^{2}(\mathbb{R},H)$
\emph{autonomous} or \emph{time translation invariant}, if it commutes
with $\tau_{h}$ for all $h\in\mathbb{R},$ i.e.,
\[
\tau_{h}B\subseteq B\tau_{h}\quad(h\in\mathbb{R}).
\]

\end{defn}
For an evolution to take place, a physically reasonable property is
\emph{causality}. Denoting by $\1_{_{_{_{\mathbb{R}_{\leq a}}}}}(\mathrm{m}_{0})$
the multiplication operator on $L_{\nu}^{2}(\mathbb{R},H)$ given
by $\left(\1_{_{_{_{\mathbb{R}_{\leq a}}}}}(\mathrm{m}_{0})u\right)(t)\coloneqq\1_{_{_{_{\mathbb{R}_{\leq a}}}}}(t)u(t)$
for $u\in L_{\nu}^{2}(\mathbb{R},H)$ and $t\in\mathbb{R}$, the definition of causality reads as follows. \\

\begin{defn}
We call a closed mapping $M\colon D(M)\subseteq L_{\nu}^{2}(\mathbb{R},H)\to L_{\nu}^{2}(\mathbb{R},H)$
\emph{causal,} if for all $f,g\in D(M)$ and $a\in\mathbb{R}$ we
have
\begin{equation}
\1_{\mathbb{R}_{\leq a}}(\mathrm{m}_{0})f=\1_{\mathbb{R}_{\leq a}}(\mathrm{m}_{0})g\Rightarrow\1_{\mathbb{R}_{\leq a}}(\mathrm{m}_{0})M(f)=\1_{\mathbb{R}_{\leq a}}(\mathrm{m}_{0})M(g).\label{eq:causality}
\end{equation}
\end{defn}
\begin{rem}$\,$
\begin{enumerate}[(a)]
 \item Property (\ref{eq:causality}) reflects the idea that the
``future behavior does not influence the past'', which may be taken
as the meaning of \emph{causality}. 

\item If, in addition, $M$ in the latter definition is linear, then
$M$ is causal if and only if for all $u\in D(M)$ 
\[
\1_{_{_{_{\mathbb{R}_{\leq a}}}}}(\mathrm{m}_{0})u=0\Rightarrow\1_{_{_{_{\mathbb{R}_{\leq a}}}}}(\mathrm{m}_{0})Mu=0\qquad(a\in\mathbb{R}).
\]

\item For continuous mappings $M$ with full domain $L_{\nu}^{2}(\mathbb{R},H)$,
causality can also equivalently be expressed by the equation
\[
\1_{_{_{_{\mathbb{R}_{\leq a}}}}}(\mathrm{m}_{0})M=\1_{_{_{_{\mathbb{R}_{\leq a}}}}}(\mathrm{m}_{0})M\1_{_{{\mathbb{R}_{\leq a}}}}(\mathrm{m}_{0})
\]
holding for all $a\in\mathbb{R}$. If, in addition, $M$ is autonomous
then this condition is in turn equivalent to 
\[
\1_{_{{\mathbb{R}_{\leq a}}}}(\mathrm{m}_{0})M=\1_{_{{\mathbb{R}_{\leq a}}}}(\mathrm{m}_{0})M\1_{_{{\mathbb{R}_{\leq a}}}}(\mathrm{m}_{0})
\]
for some $a\in\mathbb{R}$, e.g. $a=0$.

\item The respective concept for causality for closable mappings is
a bit more involved, see \cite{Waurick2013_causality}.
\end{enumerate}
\end{rem}
In order to motivate the problem class discussed in this section a
bit further we state the following well-known representation theorem:
\begin{thm}[{see e.g. \cite[Theorem 2.3]{Weiss1991}, \cite[Theorem 9.1]{Thomas1997}
or \cite{MR0083623}}]
\label{thm:repr}Let $H$ be a Hilbert space, $\mathcal{M}\colon L_{\nu}^{2}(\mathbb{R},H)\to L_{\nu}^{2}(\mathbb{R},H)$
bounded, linear, causal and autonomous. Then there exists a unique
$\tilde{M}\colon\left\{ z\in\mathbb{C}|\Re z>\nu\right\} \to L(H)$
bounded and analytic, such that for all $u\in L_{\nu}^{2}(\mathbb{R},H)$
we have 
\[
\mathcal{M}u=\mathcal{L}_{\nu}^{*}\tilde{M}(\ii\mathrm{m}+\nu)\mathcal{L}_{\nu}u,
\]
where $\left(\tilde{M}(\ii\mathrm{m}+\nu)\phi\right)(\xi)\coloneqq\tilde{M}(\ii\xi+\nu)\phi(\xi)$
for $\phi\in L^{2}(\mathbb{R},H)$, $\xi\in\mathbb{R}$. 
\end{thm}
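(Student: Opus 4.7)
The plan is to transfer the problem to $L^{2}(\mathbb{R},H)$ via the Fourier--Laplace transform, use autonomy to produce a multiplier representation on the vertical line $\nu+\mathrm{i}\mathbb{R}$, and then use causality to promote that multiplier to the boundary trace of a bounded analytic function on the half-plane $\{\Re z>\nu\}$.

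First I would set $N\coloneqq\mathcal{L}_{\nu}\mathcal{M}\mathcal{L}_{\nu}^{\ast}\in L(L^{2}(\mathbb{R},H))$. A short computation using $\mathcal{L}_{\nu}=\mathcal{F}e^{-\nu\mathrm{m}}$, the identity $e^{-\nu\mathrm{m}}\tau_{h}e^{\nu\mathrm{m}}=e^{\nu h}\tau_{h}$ and $\mathcal{F}\tau_{h}\mathcal{F}^{\ast}=e^{\mathrm{i}h\mathrm{m}}$ shows $\mathcal{L}_{\nu}\tau_{h}\mathcal{L}_{\nu}^{\ast}=e^{\nu h}e^{\mathrm{i}h\mathrm{m}}$, so autonomy of $\mathcal{M}$ is equivalent to $N$ commuting with multiplication by $e^{\mathrm{i}h\mathrm{m}}$ for every $h\in\mathbb{R}$. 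Since the linear span of these exponentials is strongly dense in the scalar multipliers $L^{\infty}(\mathbb{R})$, $N$ lies in the commutant of the scalar multiplication algebra on $L^{2}(\mathbb{R},H)$. The classical description of that commutant then furnishes a strongly measurable $M_{0}\colon\mathbb{R}\to L(H)$ with $\|M_{0}\|_{\infty}\leq\|\mathcal{M}\|$ and $N=M_{0}(\mathrm{m})$, that is, $\mathcal{M}=\mathcal{L}_{\nu}^{\ast}M_{0}(\mathrm{m})\mathcal{L}_{\nu}$.

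To pass from a multiplier on one line to a holomorphic function on the half-plane I would invoke causality. A standard causality argument shows that a bounded causal autonomous operator on $L_{\nu}^{2}(\mathbb{R},H)$ extends uniquely to a bounded operator $\mathcal{M}_{\mu}$ on $L_{\mu}^{2}(\mathbb{R},H)$ with norm at most $\|\mathcal{M}\|$ for every $\mu\geq\nu$, these extensions being consistent on the common dense subspace of compactly supported functions. Repeating the commutant argument at each $\mu$ yields $M_{\mu}\in L^{\infty}(\mathbb{R},L(H))$ with $\|M_{\mu}\|_{\infty}\leq\|\mathcal{M}\|$, and I would define $\widetilde{M}(\mu+\mathrm{i}\xi)\coloneqq M_{\mu}(\xi)$. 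Fixing $x,y\in H$ and a scalar test function $\varphi\in C_{\infty,c}((0,\infty))$, the Paley--Wiener theorem gives that $z\mapsto\widehat{\varphi}(z)$ is entire, and pairing $x$ with $\mathcal{M}_{\mu}(\varphi\otimes y)$ in the Fourier--Laplace picture produces an expression involving $\langle x|\widetilde{M}(z)y\rangle_{H}\widehat{\varphi}(z)$ on the line $\Re z=\mu$. A Morera / Cauchy-integral argument exploiting the causal support of $\mathcal{M}_{\mu}(\varphi\otimes y)$ then shows this expression is analytic in $z$ across the half-plane, so that $\widetilde{M}$ is weakly, and hence strongly, analytic with $\sup_{\Re z>\nu}\|\widetilde{M}(z)\|\leq\|\mathcal{M}\|$.

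Uniqueness is immediate from boundary uniqueness for bounded analytic functions on the half-plane, since two representers must agree almost everywhere on $\nu+\mathrm{i}\mathbb{R}$. The main technical obstacle is the analytic-extension step: upgrading the measurable vertical-line representers $M_{\mu}$ to a genuine operator-valued holomorphic function on $\{\Re z>\nu\}$ is essentially a vector-valued Paley--Wiener / Hardy-space statement, and it is precisely there that the full force of causality together with the strong measurability of the $M_{\mu}$ has to be deployed.
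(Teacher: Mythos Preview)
The paper does not supply a proof of this theorem; it is quoted as a known representation result with references to Weiss, Thomas and Toll, so there is no in-paper argument to compare against.

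Your outline is the standard route (essentially the Four\`es--Segal/Weiss argument) and is correct. The commutant step is fine: commuting with all $e^{\ii h\mathrm{m}}$ is the same as commuting with the strong-operator closure of their span, which is the full scalar multiplier algebra, and the commutant of that on $L^{2}(\mathbb{R},H)$ is indeed $L^{\infty}_{s}(\mathbb{R},L(H))$. The causality-extension step you call ``standard'' is the identity
\[
\|g\|_{L^{2}_{\mu}}^{2}=2(\mu-\nu)\int_{\mathbb{R}}e^{-2(\mu-\nu)T}\,\|\1_{\mathbb{R}_{\leq T}}(\mathrm{m}_{0})g\|_{L^{2}_{\nu}}^{2}\,dT\qquad(\mu>\nu),
\]
combined with the causal truncated bound $\|\1_{\mathbb{R}_{\leq T}}(\mathrm{m}_{0})\mathcal{M}f\|_{L^{2}_{\nu}}\leq\|\mathcal{M}\|\,\|\1_{\mathbb{R}_{\leq T}}(\mathrm{m}_{0})f\|_{L^{2}_{\nu}}$; this gives $\|\mathcal{M}_{\mu}\|\leq\|\mathcal{M}\|$ at once.

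For the analyticity step you flag as the obstacle, there is a cleaner path than assembling the family $(M_{\mu})_{\mu}$ a posteriori. Fix $x,y\in H$ and $\varphi\in C_{\infty,c}(\oi0\infty)$. By causality $\mathcal{M}(\varphi\otimes y)$ is supported in $\lci0\infty$ and, by the extension just proved, lies in $L^{2}_{\mu}(\mathbb{R},H)$ for every $\mu>\nu$. Hence $z\mapsto\langle x\,|\,\widehat{\mathcal{M}(\varphi\otimes y)}(z)\rangle_{H}$ is holomorphic on $\{\Re z>\nu\}$ by the (vector-valued) Paley--Wiener theorem, and on each line $\Re z=\mu$ it equals $\langle x\,|\,\widetilde{M}(z)y\rangle_{H}\,\widehat{\varphi}(z)$ by the multiplier representation there. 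Since $\widehat{\varphi}$ is entire and one can vary $\varphi$ to avoid zeros, this yields holomorphy of $z\mapsto\langle x\,|\,\widetilde{M}(z)y\rangle_{H}$ directly, with the uniform bound $\|\widetilde{M}(z)\|\leq\|\mathcal{M}\|$. Strong analyticity then follows from weak analyticity.
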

This theorem tells us that the class of bounded, linear, causal, autonomous
operators is described by bounded and analytic functions of $\partial_{0,\nu}$
or equivalently of $\partial_{0,\nu}^{-1}$. Thus, we are led to introduce
the Hardy space for some open $E\subseteq\mathbb{C}$ and Hilbert
space $H$:
\[
\mathcal{H}^{\infty}(E,L(H))\coloneqq\{M\colon E\to L(H)\,|\, M\text{ bounded, analytic}\}.
\]
Clearly, $\mathcal{H}^{\infty}(E,L(H))$ (or briefly $\mathcal{H}^{\infty}$
if $E$ and $H$ are clear from the context) is a Banach space with
norm
\[
\mathcal{H}^{\infty}\ni M\mapsto\|M\|_{\infty}\coloneqq\sup\{\|M(z)\||z\in E\}.
\]
In the particular case of $M\in\mathcal{H}^{\infty}(E,L(H))$ with
$E=B(r,r)$ for some $r>0$, we define for $\nu>\frac{1}{2r}$ 
\begin{align*}
M(\partial_{0,\nu}^{-1})\colon L_{\nu}^{2}(\mathbb{R},H) & \to L_{\nu}^{2}(\mathbb{R},H),\\
\phi & \mapsto\mathcal{L}_{\nu}^{\ast}M\left(\frac{1}{\ii\mathrm{m}+\nu}\right)\mathcal{L}_{\nu}\phi.
\end{align*}
Here $M\left(\frac{1}{\ii\mathrm{m}+\nu}\right)\in L(L^{2}(\mathbb{R},H))$
is given by 
\[
\left(M\left(\frac{1}{\ii\mathrm{m}+\nu}\right)w\right)(t)\coloneqq M\left(\frac{1}{\ii t+\nu}\right)w(t)
\]
for $w\in L^{2}(\mathbb{R},H)$ and $t\in\mathbb{R}.$ Note that $\sup_{z\in B\left(\frac{1}{2\nu},\frac{1}{2\nu}\right)}\|M(z)\|=\|M(\partial_{0,\nu}^{-1})\|_{L(H_{\nu,0}(\mathbb{R},H))}$
according to \cite[Theorem 9.1]{Thomas1997}.

Our first theorem asserting a solution theory for evolutionary equations
reads as follows.
\begin{thm}[{\cite[Solution Theory]{Picard2009}, \cite[Chapter 6]{Picard_McGhee}}]
\label{thm:sol_theo_skew}Let $\nu>0$, $r>\frac{1}{2\nu}$
and $M\in\mathcal{H}^{\infty}(B(r,r),L(H))$, $A\colon D(A)\subseteq H\to H$.
Assume that
\[
A\text{ is skew-selfadjoint, and}
\]
\begin{equation}
\bigvee_{c>0}\bigwedge_{z\in B(r,r)}z^{-1}M(z)-c\mbox{ is monotone.}\label{eq:pos_material_law}
\end{equation}
Then $\overline{\partial_{0,\nu}M(\partial_{0,\nu}^{-1})+A}$ is continuously
invertible in $L_{\nu}^{2}(\mathbb{R},H)$. The closure of the inverse
is causal. Moreover, the solution operator is independent of the choice
of $\nu$ in the sense that for $\rho>\nu$ and $f\in L_{\nu}^{2}(\mathbb{R},H)\cap L_{\rho}^{2}(\mathbb{R},H)$
we have that 
\[
\overline{\left(\partial_{0,\nu}M(\partial_{0,\nu}^{-1})+A\right)^{-1}}f=\overline{\left(\partial_{0,\rho}M(\partial_{0,\rho}^{-1})+A\right)^{-1}}f.
\]
\end{thm}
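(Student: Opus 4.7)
My plan is to establish bounded invertibility by reducing through the unitary Fourier--Laplace transform $\mathcal L_{\nu}$ to a pointwise-in-frequency invertibility for a family of strictly accretive operators on $H$, and then invoking Corollary \ref{lin:op:theo} fibre-wise in $\xi$. Concretely, using $\partial_{0,\nu} = \mathcal L_{\nu}^{\ast}(\ii \mathrm{m} + \nu)\mathcal L_{\nu}$ and tacitly extending $A$ canonically to $L_{\nu}^{2}(\mathbb R,H)$ (where it remains skew-selfadjoint and commutes with $\partial_{0,\nu}$), the operator $T := \partial_{0,\nu}M(\partial_{0,\nu}^{-1}) + A$ is unitarily equivalent to a fibred family $\bigl(T(\xi)\bigr)_{\xi\in\mathbb R}$, where
\[
 T(\xi) := (\ii\xi+\nu)M\bigl((\ii\xi+\nu)^{-1}\bigr) + A,\qquad D(T(\xi))=D(A).
\]

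For each $\xi$, $T(\xi)$ is a bounded perturbation of the skew-selfadjoint $A$, hence closed and densely defined. Setting $z := (\ii\xi+\nu)^{-1}$, one sees that the range of $\xi\mapsto z$ traces $\partial B(\tfrac{1}{2\nu},\tfrac{1}{2\nu})\setminus\{0\}\subseteq B(r,r)$, the inclusion being exactly the content of $r > \tfrac{1}{2\nu}$. Hypothesis (\ref{eq:pos_material_law}) then supplies
\[
 \Re\langle u | T(\xi) u\rangle_{H} = \Re\langle u | z^{-1}M(z) u\rangle_{H} + \Re\langle u | Au\rangle_{H} \geq c\|u\|_{H}^{2}\qquad(u\in D(A)),
\]
while the relation $D(A)=D(A^{\ast})$ (because $A^{\ast}=-A$) yields cleanly that $T(\xi)^{\ast} = (-\ii\xi+\nu)M((\ii\xi+\nu)^{-1})^{\ast} - A$ on $D(A)$; the scalar identity $\Re\bigl(\bar z^{-1}\langle w | M(z)^{\ast}w\rangle\bigr) = \Re\langle w | z^{-1}M(z) w\rangle$ transfers the accretivity bound to $T(\xi)^{\ast}$ with the same constant $c$. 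Corollary \ref{lin:op:theo} now provides $T(\xi)^{-1}\in L(H)$ with $\|T(\xi)^{-1}\|_{L(H)}\leq 1/c$ uniformly in $\xi$, so the fibred operator $S$ on $L^{2}(\mathbb R,H)$ given by $(Sg)(\xi) := T(\xi)^{-1} g(\xi)$ satisfies $\|S\|\leq 1/c$. The candidate inverse $R := \mathcal L_{\nu}^{\ast}S\mathcal L_{\nu}$ then has $\|R\|_{L(L_{\nu}^{2})}\leq 1/c$, and verifying $R = \overline{T}^{\,-1}$ on the dense subspace $C_{\infty,c}(\mathbb R)\otimes D(A)$ by direct computation and extending by closedness of $\overline T$ completes the proof of continuous invertibility.

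The main obstacle I anticipate is the measurability and holomorphic regularity of $\xi \mapsto T(\xi)^{-1}$ needed to justify that $R$ takes values in $D(\overline T)$ and that $\overline T R = \mathrm{id}$; this is handled by observing that the symbol $s \mapsto (sM(s^{-1})+A)^{-1}$ is in fact holomorphic on the whole half-plane $\Re s > \tfrac{1}{2r}$, with the uniform bound $1/c$, by the same accretivity argument applied to every $s^{-1}\in B(r,r)$, so standard resolvent manipulations apply. Causality of $\overline{T}^{\,-1}$ is then the converse direction of Theorem \ref{thm:repr}: $R$ is autonomous (its Fourier-side symbol is a multiplication operator in frequency), and the just-noted holomorphic extension together with the uniform bound characterise causality via Paley--Wiener. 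For the $\nu$-independence, the same holomorphic symbol governs $\overline{T}^{\,-1}$ for every admissible $\nu$; for $f \in L_{\nu}^{2}\cap L_{\rho}^{2}$ one may approximate $f$ simultaneously in both weighted norms by elements of $C_{\infty,c}(\mathbb R)\otimes D(A)$ on which both inverses are given by the same (contour-independent) Laplace inversion of that symbol, so the two operators agree on the common data space.
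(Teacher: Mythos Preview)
Your argument is correct, but the route differs from the paper's. The paper works directly in $L_{\nu}^{2}(\mathbb{R},H)$: one shows that $\partial_{0,\nu}M(\partial_{0,\nu}^{-1})+A$ is closable, that its closure is strictly accretive, and that the adjoint has the \emph{same} domain and is likewise strictly accretive; Corollary~\ref{lin:op:D(A):theo} then yields invertibility in one stroke, and Paley--Wiener gives causality. You instead diagonalise via $\mathcal{L}_{\nu}$ and invoke Corollary~\ref{lin:op:theo} fibrewise on $T(\xi)$, reconstructing $\overline{T}^{\,-1}$ as an explicit Fourier multiplier. Your approach has the advantage of producing the solution operator as $\mathcal{L}_{\nu}^{\ast}\bigl((\ii\mathrm{m}+\nu)M((\ii\mathrm{m}+\nu)^{-1})+A\bigr)^{-1}\mathcal{L}_{\nu}$ directly, from which both causality (holomorphy and boundedness of the symbol on $\Re s>\tfrac{1}{2r}$ feed straight into Paley--Wiener) and $\nu$-independence (one analytic symbol, contour-independence) fall out more transparently than in the abstract argument. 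The paper's approach, on the other hand, does not rely on the spectral representation of $\partial_{0,\nu}$ at all and therefore generalises verbatim to the non-autonomous setting of Theorems~\ref{thm:Solutiontheory} and~\ref{thm:non-auto2}, where no Fourier diagonalisation is available; your fibrewise method would break down there. One minor point: since you observe $D(T(\xi))=D(T(\xi)^{\ast})=D(A)$, Corollary~\ref{lin:op:D(A):theo} is the more natural reference for the fibre problem, though Corollary~\ref{lin:op:theo} of course also applies.
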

\begin{proof}[Sketch of the proof]
 First, one proves that the operator $\partial_{0,\nu}M(\partial_{0,\nu}^{-1})+A$
is closable and its closure is strictly monotone. The same holds for
its adjoint, which turns out to have the same domain. Thus, the well-posedness
of (\ref{eq:gen_evo}), where $A$ is skew-selfadjoint and $\mathcal{M}=M(\partial_{0,\nu}^{-1})$
follows by Corollary \ref{lin:op:D(A):theo}. The causality of the
solution operator can be shown by a Paley-Wiener type result (see
e.g. \cite[Theorem 19.2]{rudin1987real}).
\end{proof}
According to the latter theorem, the solution operator associated
with an evolutionary problem is independent of the particular choice
of $\nu$. Therefore, we usually will drop the index $\nu$ and write
instead $\partial_{0}$ and $M(\partial_{0}^{-1})$.

Since the positive definiteness condition in (\ref{eq:pos_material_law})
will occur several times, we define $\mathcal{H}^{\infty,c}$ to be
the set of $M\in\mathcal{H}^{\infty}$ satisfying condition (\ref{eq:pos_material_law})
with the constant $c\in\mathbb{R}_{>0}$.

Note that with $A=0$ in Theorem \ref{thm:sol_theo_skew}, ordinary
differential equations are covered. We shall further elaborate this
fact in Subsection \ref{sub:The-closedness-of}. Here,\textcolor{black}{{}
let us illustrate the versatility of this well-posedness result, by
applying the result to several (partial) }differ\textcolor{black}{ential
equations arising in mathematical physics.}
\begin{example}[The heat equation]
\label{ex:heat_eq} Recall the definition of the operators $\grad_{c},\grad,\dive_{c}$
and $\dive$ from Definition \ref{def:div_grad}. The domain of $\grad_{c}$
coincides with the classical Sobolev-space $H_{0}^{1}(\Omega)$, the
space of $L^{2}(\Omega)$-functions with distributional gradients
lying in $L^{2}(\Omega)^{n}$ and having vanishing trace, while the
domain of $\grad$ is $H^{1}(\Omega)$, the Sobolev-space of weakly
differentiable functions in $L^{2}(\Omega)$. Analogously, $v\in D(\dive_{c})$
is a $L^{2}(\Omega)$-vector field, whose distributional divergence
is in $L^{2}(\Omega)$ and satisfies a generalized Neumann-condition
$v\cdot N=0$ on $\partial\Omega,$ where $N$ denotes the outward
unit normal vector field on $\partial\Omega$.%
\footnote{Note that the definition of $\dive_{c}$ still makes sense, even if
the boundary of $\Omega$ is non-smooth and hence, the normal vector
field $N$ does not exist. Thus, for rough domains $\Omega$ the condition
$v\in D(\dive_{c})$ is a suitable substitute for the condition $v\cdot N=0$
on $\partial\Omega$. %
} Recall the conservation of energy equation, given by 
\begin{equation}
\partial_{0}\theta+\dive q=f,\label{eq:conservation_energy}
\end{equation}
where $\theta\in L_{\nu}^{2}(\mathbb{R},L^{2}(\Omega))$ denotes the
(unknown) heat, $q\in L_{\nu}^{2}(\mathbb{R},L^{2}(\Omega)^{3})$
stands for the heat flux and $f\in L_{\nu}^{2}(\mathbb{R},L^{2}(\Omega))$
models an external heat source. This equation is completed by a constitutive
relation, called Fourier's law 
\begin{equation}
q=-k\grad\theta,\label{eq:Fourier}
\end{equation}
where $k\in L^{\infty}(\Omega)^{n\times n}$ denotes the heat conductivity
and is assumed to be selfadjoint-matrix-valued and strictly positive,
i.e. there is some $c>0$ such that $k(x)\geq c$ for almost every
$x\in\Omega.$ Plugging Fourier's law into the conservation of energy
equation, we end up with the familiar form of the heat equation 
\[
\partial_{0}\theta-\dive k\grad\theta=f.
\]
However, it is also possible to write the equations (\ref{eq:conservation_energy})
and (\ref{eq:Fourier}) as the system
\[
\left(\partial_{0}\left(\begin{array}{cc}
1 & 0\\
0 & 0
\end{array}\right)+\left(\begin{array}{cc}
0 & 0\\
0 & k^{-1}
\end{array}\right)+\left(\begin{array}{cc}
0 & \dive\\
\grad & 0
\end{array}\right)\right)\left(\begin{array}{c}
\theta\\
q
\end{array}\right)=\left(\begin{array}{c}
f\\
0
\end{array}\right).
\]
Requiring suitable boundary conditions, say Dirichlet boundary conditions
for the temperature density $\theta$ (i.e. $\theta\in D(\grad_{c})$)
the system becomes an evolutionary equation of the form discussed
in Theorem \ref{thm:sol_theo_skew} with 
\[
M(z)=\left(\begin{array}{cc}
1 & 0\\
0 & 0
\end{array}\right)+z\left(\begin{array}{cc}
0 & 0\\
0 & k^{-1}
\end{array}\right)
\]
and the skew-selfadjoint operator 
\[
A=\left(\begin{array}{cc}
0 & \dive\\
\grad_{c} & 0
\end{array}\right).
\]
By our assumptions on the coefficient $k$, the solvability condition
(\ref{eq:pos_material_law}) can easily be verified for our material
law $M$. Indeed, with $k$ being bounded and strictly positive, the
inverse operator $k^{-1}$ is bounded and strictly positive as well.
Now, since for $z\in B(r,r)$ for some $r>0$ the real part of $z^{-1}$
is bounded below by $\frac{1}{2r}$, we deduce that 
\[
\Re\left(z^{-1}M(z)\right)=\Re\left(z^{-1}\left(\begin{array}{cc}
1 & 0\\
0 & 0
\end{array}\right)+\left(\begin{array}{cc}
0 & 0\\
0 & k^{-1}
\end{array}\right)\right)\geq\left(\begin{array}{cc}
\frac{1}{2r} & 0\\
0 & \frac{1}{c}
\end{array}\right).
\]

\end{example}
$\,$
\begin{example}[Maxwell's equations]
\label{ex:Maxwell} To begin with, we formulate the functional analytic
setting for the operator $\curl$ with and without the electric boundary
condition. For this let $\Omega\subseteq\mathbb{R}^{3}$ be a non-empty
open set and define
\begin{align*}
\tilde{\curl_{c}}\colon C_{\infty,c}(\Omega)^{3}\subseteq L^{2}(\Omega)^{3} & \to L^{2}(\Omega)^{3}\\
\phi & \mapsto\left(\begin{array}{c}
\partial_{2}\phi_{3}-\partial_{3}\phi_{2}\\
\partial_{3}\phi_{1}-\partial_{1}\phi_{3}\\
\partial_{1}\phi_{2}-\partial_{2}\phi_{1}
\end{array}\right).
\end{align*}
Analogously to the previous example, we let $\curl\coloneqq\left(\tilde{\curl_{c}}\right)^{*}$
and $\curl_{c}\coloneqq\curl^{*}$. Now, let $\varepsilon,\mu,\sigma$
be bounded linear operators in $L^{2}(\Omega)^{3}$. We assume that
both $\varepsilon$ and $\mu$ are selfadjoint and strictly positive.
As a consequence, the operator function
\[
z\mapsto\left(\begin{array}{cc}
\varepsilon & 0\\
0 & \mu
\end{array}\right)+z\left(\begin{array}{cc}
\sigma & 0\\
0 & 0
\end{array}\right)
\]
belongs to $\mathcal{H}^{\infty,c}(B(r,r),L(L^{2}(\Omega)^{6}))$
for some $c\in\mathbb{R}_{>0}$, if $r$ is chosen small enough. Thus,
due to the skew-selfadjointness of the operator $\left(\begin{array}{cc}
0 & -\curl\\
\curl_{c} & 0
\end{array}\right)$ in $L^{2}(\Omega)^{6}$, Theorem \ref{thm:sol_theo_skew} applies
to the operator sum
\[
\partial_{0}\left(\begin{array}{cc}
\varepsilon & 0\\
0 & \mu
\end{array}\right)+\left(\begin{array}{cc}
\sigma & 0\\
0 & 0
\end{array}\right)+\left(\begin{array}{cc}
0 & -\curl\\
\curl_{c} & 0
\end{array}\right),
\]
which yields continuous invertibility of the closure of the latter
operator in $L_{\nu}^{2}(\mathbb{R},L^{2}(\Omega)^{3})$ for sufficiently
large $\nu.$ It is noteworthy that the well-posedness theorem of
course also applies to the case, where $\varepsilon=0$ and the real
part of $\sigma$ is strictly positive definite, i.e., to the operator
of the form 
\[
\partial_{0}\left(\begin{array}{cc}
0 & 0\\
0 & \mu
\end{array}\right)+\left(\begin{array}{cc}
\sigma & 0\\
0 & 0
\end{array}\right)+\left(\begin{array}{cc}
0 & -\curl\\
\curl_{c} & 0
\end{array}\right).
\]
In the literature this arises when dealing with the so-called ``eddy
current problem''.
\end{example}
~
\begin{example}[The equations of elasticity and visco-elasticity]
\label{ex:visco_elastic} We begin by introducing the differential
operators involved. Let $\Omega\subseteq\mathbb{R}^{3}$ open. We
consider the Hilbert space $H_{\mathrm{sym}}(\Omega)$ given as the
space of symmetric $L^{2}(\Omega)^{3\times3}$-matrices equipped with
the inner product 
\[
\langle\Phi|\Psi\rangle_{H_{\mathrm{sym}}(\Omega)}\coloneqq\intop_{\Omega}\trace\left(\Phi(x)^{\ast}\Psi(x)\right)dx,
\]
where $\Phi(x)^{\ast}$ denotes the adjoint matrix of $\Phi(x).$
Using this Hilbert space we define the operator $\Grad_{c}$ as the
closure of 
\begin{align*}
\tilde{\Grad_{c}}:C_{\infty,c}(\Omega)^{3}\subseteq L^{2}(\Omega)^{3} & \to H_{\mathrm{sym}}(\Omega)\\
(\phi_{i})_{i\in\{1,2,3\}} & \mapsto\left(\frac{\partial_{i}\phi_{j}+\partial_{j}\phi_{i}}{2}\right)_{i,j\in\{1,2,3\}}
\end{align*}
and likewise we define $\Dive_{c}$ as the closure of 
\begin{align*}
\tilde{\Dive_{c}}:C_{\infty,c}(\Omega)^{3\times3}\cap H_{\mathrm{sym}}(\Omega)\subseteq H_{\mathrm{sym}}(\Omega) & \to L^{2}(\Omega)^{3}\\
\left(\psi_{ij}\right)_{i,j\in\{1,2,3\}} & \mapsto\left(\sum_{j=1}^{3}\partial_{j}\psi_{ij}\right)_{i\in\{1,2,3\}}.
\end{align*}
Integration by parts yields that $\Grad_{c}\subseteq-\left(\Dive_{c}\right)^{\ast}$
as well as $\Dive_{c}\subseteq-\left(\Grad_{c}\right)^{\ast}$ and
we define 
\begin{align*}
\Grad & \coloneqq-\left(\Dive_{c}\right)^{\ast},\\
\Dive & \coloneqq-\left(\Grad_{c}\right)^{\ast}.
\end{align*}
Similar to the case of $\grad$ and $\dive$, elements $u$ in the
domain of $\Grad_{c}$ satisfy an abstract Dirichlet boundary condition
of the form $u=0$ on $\partial\Omega$, while elements $\sigma$
in the domain of $\Dive_{c}$ satisfy an abstract Neumann boundary
condition of the form $\sigma N=0$ on $\partial\Omega,$ where $N$
denotes the unit outward normal vector field on $\partial\Omega.$\\
The equation of linear elasticity is given by 
\begin{equation}
\partial_{0}^{2}u-\Dive\sigma=f,\label{eq:elast}
\end{equation}
where $u\in L_{\nu}^{2}(\mathbb{R},L^{2}(\Omega)^{3})$ denotes the
displacement field of the elastic body $\Omega$, $\sigma\in L_{\nu}^{2}(\mathbb{R},H(\Omega))$
is the stress tensor and $f\in L_{\nu}^{2}(\mathbb{R},L^{2}(\Omega)^{3})$
is an external force. Equation (\ref{eq:elast}) is completed by Hooke's
law 
\begin{equation}
\sigma=C\Grad u,\label{eq:Hooke}
\end{equation}
where $C\in L(H_{\mathrm{sym}}(\Omega))$ is the so-called elasticity
tensor, which is assumed to be selfadjoint and strictly positive definite
(which in particular gives the bounded invertibility of $C$). Defining
$v\coloneqq\partial_{0}u$ as the displacement velocity, (\ref{eq:elast})
and (\ref{eq:Hooke}) can be written as a system of the form 
\[
\left(\partial_{0}\left(\begin{array}{cc}
1 & 0\\
0 & C^{-1}
\end{array}\right)+\left(\begin{array}{cc}
0 & -\Dive\\
-\Grad & 0
\end{array}\right)\right)\left(\begin{array}{c}
v\\
\sigma
\end{array}\right)=\left(\begin{array}{c}
f\\
0
\end{array}\right).
\]
Imposing boundary conditions on $v$ or $\sigma$, say for simplicity
Neumann boundary conditions for $\sigma$, we end up with an evolutionary
equation with 
\[
M(z)=\left(\begin{array}{cc}
1 & 0\\
0 & C^{-1}
\end{array}\right)
\]
and 
\[
A=\left(\begin{array}{cc}
0 & -\Dive_{c}\\
-\Grad & 0
\end{array}\right).
\]
Since $C^{-1}$ is also selfadjoint and strictly positive definite,
we obtain that $M$ satisfies the solvability condition (\ref{eq:pos_material_law}).\\
In order to incorporate viscous materials, i.e. materials showing
some memory effects, one modifies Hooke's law (\ref{eq:Hooke}) for
example by 
\begin{equation}
\sigma=C\Grad u+D\Grad\partial_{0}u,\label{eq:Kelvin-Voigt}
\end{equation}
where $D\in L(H_{\mathrm{sym}}(\Omega)).$ This modification is known
as the Kelvin-Voigt model for visco-elastic materials (see e.g. \cite[p. 163]{Duvaut1976},
\cite[Section 1.3.3]{Bertram2005}). Using $v$ instead of $u$, the
latter equation reads as 
\[
\sigma=\left(\partial_{0}^{-1}C+D\right)\Grad v=D\left(\partial_{0}^{-1}D^{-1}C+1\right)\Grad v,
\]
where we assume that $D$ is selfadjoint and strictly positive definite,
while $C\in L(H_{\mathrm{sym}}(\Omega))$ is arbitrary (the assumption
on $D$ can be relaxed, by requiring suitable positivity constraints
on $C$, see e.g. \cite[Theorem 4.1]{Picard2013_fractional}). Since
$\|\partial_{0,\nu}^{-1}\|=\nu^{-1}$ we may choose $\nu_{0}>0$ large
enough in order to get that $\|\partial_{0,\nu}^{-1}D^{-1}C\|<1$
for all $\nu\text{\ensuremath{\geq}}\nu_{0}.$ Then, by the Neumann
series, we end up with 
\[
\left(1+\partial_{0}^{-1}D^{-1}C\right)^{-1}D^{-1}\sigma=\Grad v.
\]
Hence, our new material law operator becomes 
\begin{align*}
M(z) & =\left(\begin{array}{cc}
1 & 0\\
0 & z(1+zD^{-1}C)^{-1}D^{-1}
\end{array}\right)\\
 & =\left(\begin{array}{cc}
1 & 0\\
0 & 0
\end{array}\right)+z\left(\begin{array}{cc}
0 & 0\\
0 & D^{-1}
\end{array}\right)+\sum_{k=1}^{\infty}(-1)^{k}z^{k+1}\left(\begin{array}{cc}
0 & 0\\
0 & \left(D^{-1}C\right)^{k}D^{-1}
\end{array}\right),
\end{align*}
which satisfies the condition (\ref{eq:pos_material_law}) for $z\in B(r,r),$
where $r>0$ is chosen sufficiently small. 

Another way to model materials with memory is to add a convolution term on the right hand side of
Hooke's law (\ref{eq:Hooke}), see e.g. \cite[Section III.7]{Duvaut1976},
\cite{Dafermos1970_abtract_Volterra}. The new constitutive relation
then reads as 
\[
\sigma=C\Grad u-k\ast\Grad u=\partial_{0}^{-1}\left(C-k\ast\right)\Grad v,
\]
where $k:\mathbb{R}_{\geq0}\to L(H_{\mathrm{sym}}(\Omega))$ is a
strongly measurable function satisfying $\intop_{0}^{\infty}\|k(t)\|e^{-\mu t}\mbox{ d}t<\infty$
for some $\mu\geq0$ (note that in \cite{Dafermos1970_abtract_Volterra}
$k$ was assumed to be absolutely continuous). Again, choosing $\nu>0$
large enough, we end up with a material law operator (see \cite[Subsection 4.1]{Trostorff2012_integro})
\begin{equation}
M(z)=\left(\begin{array}{cc}
1 & 0\\
0 & C^{-\frac{1}{2}}(1-\sqrt{2\pi}C^{-\frac{1}{2}}\:\hat{k}(-\ii z^{-1})C^{-\frac{1}{2}})^{-1}C^{-\frac{1}{2}}
\end{array}\right),\label{eq:material_law_dafermos}
\end{equation}
where $\hat{k}$ denotes the Fourier transform of $k$ and where we
have used that 
\[
\mathcal{L}_{\nu}\left(k\ast\right)\mathcal{L}_{\nu}^{\ast}=\sqrt{2\pi}\:\hat{k}(\mathrm{m}-\ii\nu)
\]
for $\nu\geq\mu$ (see e.g. \cite[Lemma 3.4]{Trostorff2012_integro}).
According to the solution theory presented in Theorem \ref{thm:sol_theo_skew},
we have to find suitable assumptions on the kernel $k$ in order to
obtain the positivity condition (\ref{eq:pos_material_law}). This
is done in the following theorem. \end{example}
\begin{thm}[Integro-differential equations, \cite{Trostorff2012_integro}]
\label{thm:integro} Let $H$ be a separable Hilbert space and $k:\mathbb{R}_{\geq0}\to L(H)$
a strongly measurable function satisfying $\intop_{0}^{\infty}\|k(t)\|e^{-\mu t}<\infty$
for some $\mu\in\mathbb{R}$. If

\begin{enumerate}[(a)]

\item $k(t)$ is selfadjoint for almost every $t\in\mathbb{R}_{\geq0}$,

\item there exists $d\geq0,\nu_{0}\geq\mu$ such that 
\[
t\Im\hat{k}(t-\ii\nu_{0})\leq d
\]
for all $t\in\mathbb{R},$

\end{enumerate}

then there exists $r>0$ such that the material law $M(z)\coloneqq 1+\sqrt{2\pi}\:\hat{k}(-\ii z^{-1})$
with $z\in B(r,r)$ satisfies (\ref{eq:pos_material_law}). If, in
addition, $k$ satisfies

\begin{enumerate}[(c)]

\item $k(t)k(s)=k(s)k(t)$ for almost every $s,t\in\mathbb{R}_{\geq0},$

\end{enumerate}

then there exists $r'>0$ such that the material law $\tilde{M}(z)\coloneqq\left(1-\sqrt{2\pi}\:\hat{k}(-\ii z^{-1})\right)^{-1}$
with $z\in B(r',r')$ satisfies (\ref{eq:pos_material_law}).\end{thm}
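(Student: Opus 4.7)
The strategy is to reformulate (\ref{eq:pos_material_law}) as an explicit half-plane estimate. Parametrizing $z\in B(r,r)$ via $z^{-1}=\nu+it$ with $\nu>1/(2r)$ and $t\in\mathbb{R}$ gives $-iz^{-1}=t-i\nu$ and $K(t-i\nu):=\sqrt{2\pi}\hat k(t-i\nu)=\int_0^\infty e^{-(\nu+it)s}k(s)\,ds$. By hypothesis (a), $\Re K$ and $\Im K$ are selfadjoint operators, and
\[
\Re\langle x\mid z^{-1}M(z)x\rangle=\nu\|x\|^2+\bigl\langle x\mid\bigl(\nu\Re K(t-i\nu)-t\Im K(t-i\nu)\bigr)x\bigr\rangle.
\]
The task reduces to bounding the second summand by a $\nu$-independent constant times $\|x\|^2$, uniformly in $t$, and then shrinking $r$ enough (equivalently enlarging the minimal allowed $\nu$) that the leading $\nu\|x\|^2$ dominates and produces the required lower bound $c\|x\|^2$.

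Hypothesis (b) supplies control of $-t\Im K$ only at the distinguished level $\nu=\nu_0$. To propagate it to $\nu>\nu_0$, I would use that $\hat k$ is bounded and analytic in the half-plane $\{\Im\zeta<-\mu\}$, so that on any lower line $\Im\zeta=-\nu$ it is recovered from $\hat k(\cdot-i\nu_0)$ by Poisson integration
\[
\hat k(t-i\nu)=\int_{\mathbb{R}}P_{\nu-\nu_0}(t-\tau)\,\hat k(\tau-i\nu_0)\,d\tau,\qquad P_s(u):=\tfrac{s}{\pi(s^2+u^2)}.
\]
Splitting $t=\tau+(t-\tau)$ inside this integral when applied to $-t\Im\hat k(t-i\nu)$, the $\tau$-portion is bounded in the operator-form sense by the constant $d$ of (b) (because $P_{\nu-\nu_0}$ is a probability density), while the residual $(t-\tau)$-portion, which is a convolution with the conjugate Poisson kernel, is handled by the Cauchy--Riemann identities for the bounded analytic function $\hat k$, converting it into a contribution proportional to $(\nu-\nu_0)\Re\hat k(t-i\nu)$. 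An analogous Poisson argument, starting from the pointwise bound $\|\Re K(\cdot-i\nu_0)\|\leq\int_0^\infty\|k(s)\|e^{-\nu_0 s}\,ds$, controls $\nu\Re K(t-i\nu)$ uniformly in $t$ for $\nu\geq\nu_0$. Combining both and taking $r<1/(2\nu_0)$ sufficiently small yields a positive $c$.

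I expect the main technical obstacle to be the rigorous handling of the conjugate-Poisson convolution: its kernel $uP_{\nu-\nu_0}(u)\sim 1/u$ at infinity is only conditionally convergent, so one must interpret the integral as a principal value and invoke that $\hat k(\cdot-i\nu_0)$ is the $L^\infty$-boundary value of a bounded analytic function --- in particular, its real and imaginary parts are related through the Hilbert transform along $\Im\zeta=-\nu_0$.

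For the second assertion, commutativity (c) enables a joint spectral decomposition of $\{k(s)\}_{s\geq 0}$, reducing $\tilde M$ to a fibrewise scalar computation. On each fibre $\sqrt{2\pi}\hat k(t-i\nu)$ is a scalar $w=w(t,\nu)\in\mathbb{C}$, and
\[
\Re\frac{\nu+it}{1-w}=\frac{\nu-\nu\Re w-t\Im w}{|1-w|^2}.
\]
Choosing $r'$ so small that $|w|\leq\tfrac12$ uniformly on $B(r',r')$ --- available since $\|K(t-i\nu)\|\to 0$ as $\nu\to\infty$ --- bounds the denominator above by $9/4$, while the numerator, whose decisive term $-t\Im w$ has the same sign pattern as in the first part, is controlled from below by the analysis just outlined. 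This delivers the required positive constant for $\tilde M$ on $B(r',r')$.
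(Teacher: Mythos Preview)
The paper does not supply a proof of this theorem; it is quoted from \cite{Trostorff2012_integro}. There is therefore no in-paper argument to compare against, and I assess only the internal soundness of your proposal.

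Your reduction is correct: with $z^{-1}=\nu+it$ and $K(t-i\nu)=\sqrt{2\pi}\,\hat k(t-i\nu)$ one has
\[
\Re z^{-1}M(z)=\nu+\nu\,\Re K(t-i\nu)-t\,\Im K(t-i\nu),
\]
and the Poisson representation of $\hat k(\cdot-i\nu)$ from its boundary values on $\Im\zeta=-\nu_0$ is legitimate. The conjugate-Poisson step you allude to can in fact be made exact: one obtains
\[
-t\,\Im K(t-i\nu)=\int_{\mathbb R}P_{\nu-\nu_0}(t-\tau)\bigl(-\tau\,\Im K(\tau-i\nu_0)\bigr)\,d\tau-(\nu-\nu_0)\,\Re K(t-i\nu),
\]
the first summand being $\ge-\sqrt{2\pi}\,d$ by hypothesis (b).

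The gap lies in the next sentence. You assert that ``an analogous Poisson argument \dots\ controls $\nu\,\Re K(t-i\nu)$ uniformly in $t$ for $\nu\ge\nu_0$''. Poisson only gives $\|\Re K(t-i\nu)\|\le C_0\coloneqq\int_0^\infty\|k(s)\|e^{-\nu_0 s}\,ds$, hence $\|\nu\,\Re K\|\le\nu C_0$, which is \emph{not} uniform in $\nu$; for generic $k$ the quantity $\nu\,\|K(t-i\nu)\|$ is genuinely unbounded as $\nu\to\infty$. Bounding the two $\Re K$ contributions separately therefore fails. What makes the argument go through is the \emph{cancellation} you did not exploit: adding $-(\nu-\nu_0)\Re K$ to $\nu\,\Re K$ leaves $\nu_0\Re K$, so that
\[
\Re z^{-1}M(z)\ge \nu-\nu_0C_0-\sqrt{2\pi}\,d,
\]
and then $r$ with $1/(2r)>\nu_0C_0+\sqrt{2\pi}\,d$ delivers a positive $c$. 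Your outline is repairable, but the decisive point is this cancellation, not a uniform bound on $\nu\,\Re K$.

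For $\tilde M$, the fibrewise reduction via (c) and the formula $\Re\dfrac{\nu+it}{1-w}=\dfrac{\nu-\nu\Re w-t\Im w}{|1-w|^2}$ are correct. Here, however, $\nu\Re w$ enters with the \emph{opposite} sign, so the cancellation above does not occur; instead one must absorb $-(2\nu-\nu_0)\Re w$ via $|w|\le\varepsilon$. Your choice $\varepsilon=\tfrac12$ is exactly borderline: the numerator then reduces to the constant $\nu_0/2-\sqrt{2\pi}\,d$, which may be negative. One needs $\varepsilon<\tfrac12$ so that the numerator is bounded below by $\nu(1-2\varepsilon)+\nu_0\varepsilon-\sqrt{2\pi}\,d$, and then $r'$ small enough makes this positive. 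Again the idea is sound, but the quantitative execution in your sketch does not close as stated.
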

\begin{rem}
For real-valued kernels $k$, a typical assumption is that $k$ should
be non-negative and non-increasing (see e.g. \cite{Pruss2009}). This,
however implies the assumptions (a)-(c) of Theorem \ref{thm:integro}
for $k$. Moreover, kernels $k$ of bounded variation satisfy the
assumptions of the latter theorem (see \cite[Remark 3.6]{Trostorff2012_integro}).
\end{rem}
Using Theorem \ref{thm:integro} we get that $M$ given by (\ref{eq:material_law_dafermos})
satisfies the solvability condition (\ref{eq:pos_material_law}),
if $k$ satisfies the assumptions (a)-(c) of Theorem \ref{thm:integro}
and $k(t)$ and $C$ commute for almost every $t\in\mathbb{R}_{\geq0}$
(see \cite[Subsection 4.1]{Trostorff2012_integro} for a detailed
study).

Convolutions as discussed in the previous theorem need to be incorporated
due to the fact that, for instance, the elastic behavior of a solid
body depends on the stresses the body experienced in the past. One
also speaks of so-called memory effects. From a similar type of nature
is the modeling of material behavior with the help of fractional (time-)derivatives.
In fact, in recent years, material laws for visco-elastic solids were
described by using fractional derivatives (see e.g. \cite{Bertram2005,Nolte2003})
as an ansatz to better approximate a polynomial in $\partial_{0}^{-1}$
by potentially fewer terms of real powers of $\partial_{0}^{-1}$.
In \cite{Waurick2013} a model for visco-elasticity with fractional
derivatives has been analyzed mainly in the context of homogenization
issues, which will be discussed below. For the moment, we stick to
the presentation of the model and sketch the idea of the well-posedness
result for this type of equation presented in \cite[Theorem 2.1 and 2.2]{Waurick2013}.
\begin{example}[Visco-elasticity with fractional derivatives]
 In this model the Kelvin-Voigt model (\ref{eq:Kelvin-Voigt}) is
replaced by a fractional analogue of the form 
\[
\sigma=C\Grad u+D\Grad\partial_{0}^{\alpha}u,
\]
for some $\alpha\in]0,1]$. We emphasize here that $\partial_{0}^{\alpha}$
has a natural meaning as a function of a normal operator. We refer
to \cite[Subsection 2.1]{Picard2013_fractional} for a comparison
with classical notions of fractional derivatives (see e.g.~\cite{Podlubny1999}
for an introduction to fractional derivatives). As in the case $\alpha=1$
we get that $\partial_{0}^{\alpha}$ is boundedly invertible for each
$\alpha\in]0,1]$ and $\nu>0$ and by \cite[Lemma 2.1]{Picard2013_fractional}
we can estimate the norm of its inverse by 
\[
\|\partial_{0,\nu}^{-\alpha}\|\leq\nu^{-\alpha}.
\]
Again we assume for simplicity that $D$ is selfadjoint and strictly
positive definite, and thus we may rewrite the constitutive relation
above as 
\[
\sigma=(C+D\partial_{0}^{\alpha})\Grad u=D\partial_{0}^{\alpha}\left(\partial_{0}^{-\alpha}D^{-1}C+1\right)\Grad u
\]
yielding for large enough $\nu>0$ 
\[
\partial_{0}^{1-\alpha}\left(\partial_{0}^{-\alpha}D^{-1}C+1\right)^{-1}D^{-1}\sigma=\Grad v.
\]
Hence, our material law operator is given by 
\begin{equation}
\left(\begin{array}{cc}
1 & 0\\
0 & z^{\alpha}\left(1+z^{\alpha}D^{-1}C\right)^{-1}D^{-1}
\end{array}\right)=\left(\begin{array}{cc}
1 & 0\\
0 & z^{\alpha}D^{-1}
\end{array}\right)+\sum_{k=1}^{\infty}(-1)^{k}z^{(k+1)\alpha}\left(\begin{array}{cc}
0 & 0\\
0 & \left(D^{-1}C\right)^{k}D^{-1}
\end{array}\right).\label{eq:frac_mart_law}
\end{equation}
Note that the visco-elastic model under consideration is slightly
different from the one treated in \cite{Waurick2013} as there is
no further restriction on the parameter $\text{\ensuremath{\alpha}}$.
In \cite{Waurick2013}, we assumed $\alpha\geq\frac{1}{2}$ and showed
the positive definiteness of the sum in (\ref{eq:frac_mart_law})
with the help of a perturbation argument. This argument does not apply
to the situation discussed here. However, assuming selfadjointness
and non-negativity of the operators $C$ and $D$ well-posedness can
be warranted even for $\alpha<\frac{1}{2}$.
\end{example}
More generally, if one considers material law operators containing
fractional derivatives of the form%
\footnote{The infinite series in (\ref{eq:frac_mart_law}) can be treated with
the help of a perturbation argument.%
} 
\begin{equation}
M(\partial_{0}^{-1})=M_{0}+\sum_{\alpha\in\Pi}\partial_{0}^{-\alpha}M_{\alpha}+\partial_{0}^{-1}M_{1},\label{eq:material_law_fractional}
\end{equation}
where $\Pi\subseteq\left]0,1\right[$ is finite and $M_{\alpha}\in L(H)$
for some Hilbert space $H$ and each $\alpha\in\{0,1\}\cup\Pi,$ one
imposes the following conditions on the operators $M_{\alpha}$ in
order to get an estimate of the form (\ref{eq:pos_material_law})
for the material law (\ref{eq:material_law_fractional}):
\begin{thm}[{\cite[Theorem 3.5]{Picard2013_fractional}}]
\label{thm:frac}Let $(\alpha_{0},\ldots,\alpha_{k})$ be a monotonically
increasing enumeration of $\Pi.$ Assume that the operators $M_{0}$
and $M_{\alpha_{j}}$ are selfadjoint for each $j\in\{0,\ldots,k\}.$
Moreover, let $P,Q,F\in L(H)$ be three orthogonal projectors satisfying
\[
P+Q+F=1
\]
and assume, that $M_{0}$ and $M_{\alpha_{j}}$ commute with $P,Q$
and $F$ for every $j\in\{1,\ldots,k\}.$ If $PM_{\alpha_{j}}P\geq0,$
$QM_{\alpha_{j}}Q\geq0,$ $M_{0}\geq0$ and $M_{0},\Re M_{1}$ and
$M_{\alpha_{0}}$ are strictly positive definite on the ranges of
$P,Q$ and $F$ respectively, then the material law (\ref{eq:material_law_fractional})
satisfies the solvability condition (\ref{eq:pos_material_law}). 
\end{thm}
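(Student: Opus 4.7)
The plan is to verify the positive-definiteness condition (\ref{eq:pos_material_law}) by a direct computation of $\Re(z^{-1}M(z))$, decomposed via the three commuting projections. Since $M(z)=M_{0}+\sum_{j=0}^{k} z^{\alpha_{j}} M_{\alpha_{j}} + z M_{1}$, we have
\[
z^{-1}M(z) = z^{-1}M_{0} + \sum_{j=0}^{k} z^{\alpha_{j}-1} M_{\alpha_{j}} + M_{1},
\]
and because $M_{0}$ and each $M_{\alpha_{j}}$ are selfadjoint,
\[
\Re\bigl(z^{-1}M(z)\bigr) = \Re(z^{-1})\, M_{0} + \sum_{j=0}^{k} \Re(z^{\alpha_{j}-1})\, M_{\alpha_{j}} + \Re M_{1}.
\]
For $z\in B(r,r)$, setting $w = z^{-1}=\rho e^{\ii\phi}$ with $\phi\in(-\pi/2,\pi/2)$, the mapping $z\mapsto z^{-1}$ lands in the half-plane $\Re w>1/(2r)$; in particular $\rho\geq \Re w > 1/(2r)$. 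Using the principal branch, $\Re(z^{\alpha_{j}-1}) = \rho^{1-\alpha_{j}}\cos((1-\alpha_{j})\phi) \geq \rho^{1-\alpha_{j}}\cos((1-\alpha_{j})\pi/2) > 0$ since $\alpha_{j}\in(0,1)$. Thus every coefficient multiplying a selfadjoint $M_{\alpha_{j}}$ is strictly positive on $B(r,r)$, and $\Re(z^{-1})\geq 1/(2r)$.

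The second step is to split via $P+Q+F=1$. Because $M_{0}$ and the $M_{\alpha_{j}}$ commute with each of the projections, the operator $\Re(z^{-1}M(z))$ leaves the ranges of $P$, $Q$ and $F$ invariant up to the uniformly bounded contribution of $\Re M_{1}$. On $R(P)$, the strict positivity $PM_{0}P\geq c_{P}>0$ together with nonnegativity of $PM_{\alpha_{j}}P$ yields
\[
P\Re\bigl(z^{-1}M(z)\bigr)P \;\geq\; (2r)^{-1} c_{P} \,P - \|\Re M_{1}\|\,P,
\]
which is bounded below by a positive constant once $r$ is sufficiently small. On $R(Q)$ the roles reverse: $Q\Re M_{1} Q\geq c_{Q}>0$ is the dominant term and all of $\Re(z^{-1})\,QM_{0}Q$, $\Re(z^{\alpha_{j}-1})\,QM_{\alpha_{j}}Q$ are nonnegative, giving a uniform lower bound of $c_{Q}$.

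The delicate case is $R(F)$, where only $FM_{\alpha_{0}}F\geq c_{F}>0$ is assumed and the remaining terms $FM_{\alpha_{j}}F$, $j\geq 1$, as well as $F\Re M_{1}F$, have no favorable sign. Here the crucial point is that $\alpha_{0}$ is the \emph{smallest} exponent: for $j\geq 1$,
\[
\frac{|\Re(z^{\alpha_{j}-1})|}{\Re(z^{\alpha_{0}-1})} \;\leq\; \frac{\rho^{1-\alpha_{j}}}{\rho^{1-\alpha_{0}}\cos((1-\alpha_{0})\pi/2)} \;=\; \frac{\rho^{\alpha_{0}-\alpha_{j}}}{\cos((1-\alpha_{0})\pi/2)},
\]
which tends to $0$ as $\rho\to\infty$ since $\alpha_{0}-\alpha_{j}<0$. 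Choosing $r>0$ small enough so that $\rho\geq 1/(2r)$ is large yields
\[
F\,\Re\bigl(z^{-1}M(z)\bigr)\,F \;\geq\; \tfrac{1}{2}\cos((1-\alpha_{0})\pi/2)\,\rho^{1-\alpha_{0}} c_{F}\,F - \|\Re M_{1}\|\,F,
\]
and since $\rho^{1-\alpha_{0}}\to\infty$ (because $1-\alpha_{0}>0$), this too becomes uniformly positive on $R(F)$ for $r$ sufficiently small.

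Combining the three block estimates gives a uniform constant $c>0$ with $\Re(z^{-1}M(z))\geq c$ for all $z\in B(r,r)$, which is precisely (\ref{eq:pos_material_law}). The main obstacle, and the place where the theorem's structural hypotheses really are needed, is the $F$-component: the possibly sign-indefinite terms $\Re(z^{\alpha_{j}-1})FM_{\alpha_{j}}F$ for $j\geq 1$ have to be absorbed by the single strictly positive term, which works only because $\alpha_{0}=\min\Pi$ produces the highest-order blow-up in $\rho=|z^{-1}|$ and the small parameter $r$ forces $\rho$ to be large.
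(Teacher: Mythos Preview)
The paper does not actually prove this theorem; it is only stated, with a citation to \cite{Picard2013_fractional}. Your strategy---compute $\Re(z^{-1}M(z))$ using self-adjointness of the $M_{0},M_{\alpha_{j}}$, decompose along $P,Q,F$, and show that on each range one term dominates for small $r$ (equivalently large $\rho=|z^{-1}|$)---is the natural one and is essentially the argument in the cited reference.

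One step is glossed over, though. You note that $\Re(z^{-1}M(z))$ respects the block structure only ``up to the uniformly bounded contribution of $\Re M_{1}$'', but then proceed to give three separate diagonal estimates without ever recombining them. Since $M_{1}$ is \emph{not} assumed to commute with the projections, for $x=p+q+f$ the quadratic form $\langle x|\Re(z^{-1}M(z))x\rangle$ contains genuine cross terms $2\Re\langle p|\Re M_{1}\,q\rangle$, $2\Re\langle q|\Re M_{1}\,f\rangle$, $2\Re\langle p|\Re M_{1}\,f\rangle$, and your three block bounds alone do not yield the global lower bound. The fix is routine but should be stated: apply Young's inequality $2ab\leq\epsilon a^{2}+\epsilon^{-1}b^{2}$ to each cross term, with $\epsilon$ chosen (say $\epsilon=c_{Q}/4$) so that the $|q|^{2}$ contributions are absorbed by half of the fixed $Q$-block bound $c_{Q}|q|^{2}$; the remaining $|p|^{2}$ and $|f|^{2}$ contributions have coefficients of order $\|M_{1}\|^{2}/c_{Q}$, which are swallowed by the $P$- and $F$-block lower bounds once $r$ is small, since those grow like $(2r)^{-1}$ and $\rho^{1-\alpha_{0}}$ respectively. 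With this addendum your argument is complete.
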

With this theorem we end our tour through different kinds of evolutionary
equations, which are all covered by the solution theory stated in
Theorem \ref{thm:sol_theo_skew}. 

Besides the well-posedness of evolutionary equations, it is also possible
to derive a criterion for (exponential) stability in the abstract
setting of Theorem \ref{thm:sol_theo_skew}. Since the systems under
consideration do not have any regularizing property, we are not able
to define exponential stability as it is done classically, since our
solutions $u$ do not have to be continuous. So, point-wise evaluation
of $u$ does not have any meaning. Indeed, the problem class discussed
in Theorem \ref{thm:sol_theo_skew} covers also purely algebraic systems,
where definitely no regularity of the solutions is to be expected
unless the given data is regular. Thus, we are led to define a weaker
notion of exponential stability as follows.
\begin{defn}
\label{Def:exp_stab}Let $A\colon D(A)\subseteq H\to H$ be skew-selfadjoint%
\footnote{For sake of presentation, we assume $A$ to be skew-selfadjoint. However,
in \cite{Trostorff2014_stability,Trostorff2013_stability} $A$ was
assumed to be a linear maximal monotone operator. We will give a solution
theory for this type of problem later on. One then might replace the
condition of skew-selfadjointness in this definition and the subsequent
theorem by the condition of being linear and maximal monotone.%
} and $M\in\mathcal{H}^{\infty}(B(r,r),L(H))$ satisfying (\ref{eq:pos_material_law})
for some $r>0$. Let $\nu>\frac{1}{2r}$. We call the operator $\left(\overline{\partial_{0}M(\partial_{0}^{-1})+A}\right)^{-1}$
\emph{exponentially stable} with stability rate $\nu_{0}>0$ if for
each $0\leq\nu'<\nu_{0}$ and $f\in L_{-\nu'}^{2}\cap L_{\nu}^{2}(\mathbb{R},H)$
we have
\[
u\coloneqq\left(\overline{\partial_{0}M(\partial_{0}^{-1})+A}\right)^{-1}f\in L_{-\nu'}^{2}(\mathbb{R},H),
\]
which in particular implies that $\int_{\mathbb{R}}e^{2\nu't}\left|u(t)\right|^{2}dt<\infty$. 
\end{defn}
As it turns out, this notion of exponential stability yields the exponential
decay of the solutions, provided the solution $u$ is regular enough.
For instance, this can be achieved by assuming more regularity on
the given right-hand side (see \cite[Remark 3.2 (a)]{Trostorff2013_stability}).
The result for exponential stability reads as follows.
\begin{thm}[{\cite[Theorem 3.2]{Trostorff2014_stability}}]
\label{thm:criterion-stab}Let $A\colon D(A)\subseteq H\to H$ be
a skew-selfadjoint operator and $M$ be a mapping satisfying the following
assumptions for some $\nu_{0}>0$:

(a) $M\colon\mathbb{C}\setminus\overline{B\left(-\frac{1}{2\nu_{0}},\frac{1}{2\nu_{0}}\right)}\to L(H)$
is analytic;

(b) for every $0<\nu'<\nu_{0}$ there exists $c>0$ such that for
all $z\in\mathbb{C}\setminus\overline{B\left(-\frac{1}{2\nu'},\frac{1}{2\nu'}\right)}$
we have
\[
\Re z^{-1}M(z)\geq c.
\]
Then for each $\nu>0$ the solution operator $\left(\overline{\partial_{0}M(\partial_{0}^{-1})+A}\right)^{-1}$
is exponentially stable with stability rate $\nu_{0}$.\end{thm}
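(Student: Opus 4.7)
The plan is to extend the well-posedness argument behind Theorem \ref{thm:sol_theo_skew} to the signed-weight setting, produce a solution of $(\partial_{0,-\nu'}M(\partial_{0,-\nu'}^{-1})+A)u=f$ in $L_{-\nu'}^{2}(\mathbb{R},H)$ for every $0<\nu'<\nu_{0}$, and then identify it with the $L_{\nu}^{2}$-solution on the intersection of the two data spaces.

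Fix $0<\nu'<\nu_{0}$ and choose $\nu''\in(\nu',\nu_{0})$. The spectrum of the bounded normal operator $\partial_{0,-\nu'}^{-1}$ is the circle $\partial B\!\left(-\tfrac{1}{2\nu'},\tfrac{1}{2\nu'}\right)$, which lies strictly outside the smaller disc $\overline{B\!\left(-\tfrac{1}{2\nu''},\tfrac{1}{2\nu''}\right)}$ except for the single internal tangency point $0$. Under the Fourier--Laplace transform $\mathcal{L}_{-\nu'}$ the circle is parametrized by $\xi\mapsto(\ii\xi-\nu')^{-1}$ and the excluded point $0$ corresponds to $\xi=\pm\infty$, hence is never attained for finite $\xi\in\mathbb{R}$. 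By (a), $M$ is analytic on an open neighborhood of the circle (minus $0$); by (b) applied at level $\nu''$, the symbol $z\mapsto\Re z^{-1}M(z)$ is bounded below by some $c=c(\nu'')>0$ on the circle. Consequently, the multiplication operator in $\mathcal{L}_{-\nu'}$-coordinates induced by $\xi\mapsto(\ii\xi-\nu')M\!\left(\tfrac{1}{\ii\xi-\nu'}\right)$ is well-defined, densely defined and closed, and defines an operator $\partial_{0,-\nu'}M(\partial_{0,-\nu'}^{-1})$ in $L_{-\nu'}^{2}(\mathbb{R},H)$ whose real part is bounded below by $c$.

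I would then mimic the monotonicity argument of Theorem \ref{thm:sol_theo_skew} at this signed weight. Skew-selfadjointness of $A$ makes it contribute zero to the real part of the quadratic form, and the symbol calculation above contributes at least $c$. Hence $\partial_{0,-\nu'}M(\partial_{0,-\nu'}^{-1})+A$ is strictly monotone; its adjoint has multiplication symbol whose real part is the same (the skew-selfadjoint $A$ flips sign, the multiplication symbol is complex-conjugated) and is likewise strictly monotone. The corresponding variant of Corollary \ref{lin:op:D(A):theo} then yields that $\overline{\partial_{0,-\nu'}M(\partial_{0,-\nu'}^{-1})+A}$ is continuously invertible on $L_{-\nu'}^{2}(\mathbb{R},H)$.

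Finally, for $f\in L_{\nu}^{2}\cap L_{-\nu'}^{2}(\mathbb{R},H)$ set $u\coloneqq\overline{(\partial_{0}M(\partial_{0}^{-1})+A)^{-1}}f\in L_{\nu}^{2}(\mathbb{R},H)$ and let $\tilde{u}\in L_{-\nu'}^{2}(\mathbb{R},H)$ be the solution just constructed. For compactly-supported-in-time test data $f$, both $u$ and $\tilde{u}$ admit Fourier--Laplace representations, which I would identify by shifting the contour from $\nu+\ii\mathbb{R}$ to $-\nu'+\ii\mathbb{R}$: this is legitimate because, by the same signed-weight argument as above applied uniformly in $\mu\in[-\nu',\nu]$, the operator-valued map $z\mapsto(zM(z^{-1})+A)^{-1}$ is analytic and norm-bounded on the strip between the two lines, and since $\mathcal{L}_{\mu}f$ decays rapidly there for test data $f$, Cauchy's theorem gives $u=\tilde{u}$. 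A density and continuous-dependence argument then extends the equality to all $f$ in the intersection, yielding $u\in L_{-\nu'}^{2}(\mathbb{R},H)$ and, since $\nu'\in(0,\nu_{0})$ was arbitrary, the asserted exponential stability. The real obstacle is this last identification step, which hinges on the uniform analyticity and norm-boundedness of the resolvent on the entire strip of intermediate weights; producing this uniformity is exactly what hypotheses (a) and (b) are tailored to deliver.
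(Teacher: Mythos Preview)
The paper itself does not prove this theorem; it only records the statement and cites \cite{Trostorff2014_stability}. Your outline is precisely the strategy carried out in that reference: one first shows, for every weight $\mu\in(-\nu_{0},\infty)$, that $\overline{\partial_{0,\mu}M(\partial_{0,\mu}^{-1})+A}$ is boundedly invertible on $L_{\mu}^{2}(\mathbb{R},H)$ by the same monotonicity argument as in Theorem~\ref{thm:sol_theo_skew} (hypothesis~(b), applied at an intermediate level $\nu''\in(\nu',\nu_{0})$ exactly as you do, furnishes the uniform lower bound $c$ along the whole strip $\Re z\in[-\nu',\nu]$), and one then identifies the $L_{\nu}^{2}$- and $L_{-\nu'}^{2}$-solutions by a Paley--Wiener contour deformation using the analyticity and uniform boundedness of $z\mapsto\bigl(zM(z^{-1})+A\bigr)^{-1}$ on that strip. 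So your plan is correct and matches the intended proof.

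Two technical points deserve a word of care. First, hypotheses~(a) and~(b) as displayed do not literally assert boundedness of $M$, yet you need $\xi\mapsto M\bigl((\ii\xi+\mu)^{-1}\bigr)$ to be a bounded multiplication operator before you can even form $M(\partial_{0,\mu}^{-1})$; this is to be read in conjunction with Definition~\ref{Def:exp_stab}, which already places $M$ in $\mathcal{H}^{\infty}$, and in the cited reference the boundedness is part of the standing set-up. Second, for the contour shift take $f\in C_{\infty,c}(\mathbb{R},H)$ rather than merely compactly supported, so that $\mathcal{L}_{\mu}f$ really does decay rapidly in $\xi$ uniformly for $\mu$ in the closed strip; then the horizontal segments of the rectangular contour vanish in the limit and Cauchy's theorem applies cleanly. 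The density step is unproblematic because both solution operators are bounded by the common constant $1/c$.
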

\begin{example}[{A parabolic-hyperbolic system, \cite[Example 4.2]{Trostorff2013_stability}}]
 \label{ex:para-hypo-stab}Let $\Omega\subseteq\mathbb{R}^{n}$ be
an open subset and $\Omega_{0},\Omega_{1}\subseteq\Omega$ measurable,
disjoint, non-empty, $c>0$. Then the solution operator for the equation
\[
\left(\partial_{0}\left(\begin{array}{cc}
\1_{_{\Omega_{0}}}+\1_{_{\Omega_{1}}} & 0\\
0 & \1_{_{\Omega_{0}}}
\end{array}\right)+\left(\begin{array}{cc}
c & 0\\
0 & c
\end{array}\right)+\left(\begin{array}{cc}
0 & \dive_{c}\\
\grad & 0
\end{array}\right)\right)\left(\begin{array}{c}
v\\
q
\end{array}\right)=\left(\begin{array}{c}
f\\
0
\end{array}\right)
\]
for suitable $f$ is exponentially stable with stability rate $c$.\end{example}
\begin{rem}
As in \cite[Initial Value Problems]{Trostorff2013_stability},
the stability of the corresponding initial value problems can be discussed
similarly. %
\end{rem}
\begin{example}[{Example \ref{ex:para-hypo-stab} continued (see also \cite[Theorem 4.4]{Trostorff2013_stability})}]
 Let $h<0$ and assume, in addition, that $c>1$. Then the solution
operator for the equation 
\[
\left(\partial_{0}\left(\begin{array}{cc}
\1_{_{\Omega_{0}}}+\1_{_{\Omega_{1}}} & 0\\
0 & \1_{_{\Omega_{0}}}
\end{array}\right)+\tau_{h}+\left(\begin{array}{cc}
c & 0\\
0 & c
\end{array}\right)+\left(\begin{array}{cc}
0 & \dive_{c}\\
\grad & 0
\end{array}\right)\right)\left(\begin{array}{c}
v\\
q
\end{array}\right)=\left(\begin{array}{c}
f\\
0
\end{array}\right)
\]
is exponentially stable with stability rate $\nu_{0}>0$ such that
\[
\nu_{0}+e^{-\nu_{0}h}=c.
\]
\end{example}
\begin{rem}
We note that the exponential stability of integro-differential equations
can be treated in the same way, see \cite[Section 4.3]{Trostorff2013_stability}.
\end{rem}

\subsection{The closedness of the problem class and homogenization\label{sub:The-closedness-of}}

In this section we discuss the closedness of the problem class under
consideration with respect to perturbations in the material law $M$.
We will treat perturbations in the weak operator topology, which will
also have strong connections to issues stemming from homogenization
theory. For illustrational purposes we discuss the one dimensional
case of an elliptic type equation first.
\begin{example}[{see e.g.~\cite[Example 1.1.3]{BenLiPap}}]
\label{ex:simple_dim1} Let $A\colon\mathbb{R}\to\mathbb{R}$ be
a bounded, uniformly strictly positive, measurable, $1$-periodic
function. We denote the multiplication operator on $L^{2}(\oi01)$
associated with $A$ by $A(\mathrm{m}_{1})$. Denoting the one-dimensional
gradient on $\oi01$ with homogeneous Dirichlet boundary conditions
by $\partial_{1,c}$ (see also Definition \ref{def:div_grad}) and $\partial_{1}$
for its skew-adjoint, we consider the problem of finding $u_{\epsilon}\in D(\partial_{1,c})$
such that for given $f\in L^{2}(\oi01)$ and $\epsilon>0$ we have
\[
-\partial_{1}A\left(\frac{1}{\epsilon}\mathrm{m}_{1}\right)\partial_{1,c}u_{\epsilon}=f.
\]
Of course, the solvability of the latter problem is clear due to Corollary
\ref{cor:Lax-Mil}. Now, we address the question whether $(u_{\epsilon})_{\epsilon>0\text{ }}$
is convergent in any particular sense and if so, whether the limit
satisfies a differential equation of ``similar type''. Before, however,
doing so, we need the following result.\end{example}
\begin{prop}[{see e.g. \cite[Theorem 2.6]{CioDon}}]
\label{prop:per_impl_conv} Let $A\colon\mathbb{R}^{N}\to\text{\ensuremath{\mathbb{C}}}$
be bounded, measurable and \emph{$\oi01{}^{N}$-periodic},\emph{ }i.e.,
for all $k\in\mathbb{Z}^{N}$ and a.e. $x\in\mathbb{R}^{N}$ we have
$A(x+k)=A(x)$. Then 
\[
A\left(\frac{\cdot}{\epsilon}\right)\to\int_{\oi01{}^{N}}A(x)dx\quad(\epsilon\to0)
\]
 in the weak-$*$-topology $\sigma(L^{\infty}(\mathbb{R}^{N}),L^{1}(\mathbb{R}^{N}))$
of $L^{\infty}(\mathbb{R}^{N})$.\end{prop}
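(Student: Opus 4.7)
The plan is to reduce the weak-$*$ convergence to a dense subset of $L^{1}(\mathbb{R}^{N})$ by using the uniform bound $\|A(\cdot/\epsilon)\|_{L^{\infty}(\mathbb{R}^{N})}\leq\|A\|_{L^{\infty}(\mathbb{R}^{N})}$, and then to verify the convergence on characteristic functions of axis-aligned bounded rectangles by a direct computation using the periodicity of $A$.

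First I would introduce the mean value $\overline{A}\coloneqq\int_{\oi01{}^{N}}A(y)\,dy$ and observe that the family $(A(\cdot/\epsilon))_{\epsilon>0}$ is uniformly bounded in $L^{\infty}(\mathbb{R}^{N})$. Since the span of indicator functions $\chi_{Q}$, where $Q=\prod_{i=1}^{N}[a_{i},b_{i}]$ is a bounded axis-parallel rectangle, is dense in $L^{1}(\mathbb{R}^{N})$, a standard three-term $\delta$-argument shows that it suffices to prove
\[
\int_{\mathbb{R}^{N}}A\!\left(\frac{x}{\epsilon}\right)\chi_{Q}(x)\,dx\;\longrightarrow\;\overline{A}\,|Q|\qquad(\epsilon\to0+).
\]
Indeed, if $\phi\in L^{1}(\mathbb{R}^{N})$ and $\psi$ is a finite linear combination of such indicators with $\|\phi-\psi\|_{L^{1}}<\delta$, the difference $\int A(\cdot/\epsilon)\phi-\overline{A}\int\phi$ is bounded by $2\|A\|_{\infty}\delta$ plus a quantity that tends to $0$ as $\epsilon\to0+$ by the rectangle case applied to $\psi$.

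For the rectangle case I would change variables via $y=x/\epsilon$, obtaining
\[
\int_{Q}A\!\left(\frac{x}{\epsilon}\right)dx=\epsilon^{N}\int_{Q/\epsilon}A(y)\,dy,
\]
and then partition the scaled rectangle $Q/\epsilon$ into integer translates of the unit cube $\oi01{}^{N}$. Let $\mathcal{I}_{\epsilon}\coloneqq\{k\in\mathbb{Z}^{N}\mid k+\oi01{}^{N}\subseteq Q/\epsilon\}$. By the $\oi01{}^{N}$-periodicity of $A$, each full cube contributes exactly $\overline{A}$ to the integral, so
\[
\epsilon^{N}\int_{Q/\epsilon}A(y)\,dy=\epsilon^{N}|\mathcal{I}_{\epsilon}|\,\overline{A}+\epsilon^{N}\int_{(Q/\epsilon)\setminus\bigcup_{k\in\mathcal{I}_{\epsilon}}(k+\oi01{}^{N})}A(y)\,dy.
\]
Since $|\mathcal{I}_{\epsilon}|=\epsilon^{-N}|Q|+O(\epsilon^{-(N-1)})$ and the remaining set has Lebesgue measure $O(\epsilon^{-(N-1)})$, both correction terms are $O(\epsilon)$ after multiplication by $\epsilon^{N}$, and the claim follows.

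The bookkeeping for the boundary layer, i.e.\ the precise bound $|\mathcal{I}_{\epsilon}|\,\epsilon^{N}=|Q|+O(\epsilon)$, is the only genuinely technical step, but it is elementary: the symmetric difference between $Q/\epsilon$ and $\bigcup_{k\in\mathcal{I}_{\epsilon}}(k+\oi01{}^{N})$ is contained in an $O(1)$-neighborhood (in the sup-norm) of $\partial(Q/\epsilon)$, whose $N$-dimensional Lebesgue measure is of order $\epsilon^{-(N-1)}$ since $\partial Q$ has finite $(N-1)$-dimensional measure. The main conceptual point, rather than the main obstacle, is simply to recognise that the density/uniform-bound reduction converts a statement about the full weak-$*$ topology into the single concrete integral identity on rectangles that the periodicity was designed to handle.
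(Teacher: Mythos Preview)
Your argument is correct and is essentially the standard proof of this classical fact. Note, however, that the paper does not supply its own proof of this proposition; it merely records the statement with a reference to \cite[Theorem 2.6]{CioDon}, so there is nothing to compare against beyond observing that your density-plus-cube-counting argument is precisely the one found in that reference.
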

\begin{example}[Example \ref{ex:simple_dim1} continued]
 For $\epsilon>0$, we define $\xi_{\epsilon}\coloneqq A\left(\frac{1}{\epsilon}\mathrm{m}_{1}\right)\partial_{1,c}u_{\epsilon}$.
It is easy to see that $(\xi_{\epsilon})_{\epsilon}$ is bounded in
$L^{2}(\oi01)$ and also in $H_{1}(\oi01)=D(\partial_{1})$. The Arzela-Ascoli
theorem implies that $(\xi_{\epsilon})_{\epsilon}$ has a convergent
subsequence (again labelled with $\epsilon$), which converges in
$L^{2}(\oi01)$. We denote $\xi\coloneqq\lim_{\epsilon\to0}\xi_{\epsilon}.$
In consequence, by Proposition \ref{prop:per_impl_conv}, we deduce
that 
\[
\partial_{1,c}u_{\epsilon}=\frac{1}{A\left(\frac{1}{\epsilon}\mathrm{m}_{1}\right)}\xi_{\epsilon}\rightharpoonup\left(\int_{0}^{1}\frac{1}{A(x)}dx\right)\xi
\]
weakly in $L^{2}(\oi01)$ as $\epsilon\to0$. Hence, $(\partial_{1,c}u_{\epsilon})_{\epsilon>0}$
weakly converges in $L^{2}(\oi01)$, which, again by compact embedding,
implies that $(u_{\epsilon})_{\epsilon}$ converges in $L^{2}(\oi01)$.
Denoting the respective limit by $u$, we infer
\[
f=-\partial_{1}\xi=-\partial_{1}\left(\int_{0}^{1}\frac{1}{A(x)}dx\right)^{-1}\left(\int_{0}^{1}\frac{1}{A(x)}dx\right)\xi=-\partial_{1}\left(\int_{0}^{1}\frac{1}{A(x)}dx\right)^{-1}\partial_{1,c}u.
\]
Now, unique solvability of the latter equation together with a subsequence
argument imply convergence of $(u_{\epsilon})_{\epsilon}$ without
choosing subsequences.\end{example}
\begin{rem}
Note that examples in dimension $2$ or higher are far more complicated.
In particular, the computation of the limit (if there is one) is more
involved. To see this, we refer to \cite[Sections 5.4 and 6.2]{CioDon},
where the case of so-called laminated materials and general periodic
materials is discussed. In the former the limit may be expressed as
certain integral means, whereas in the latter so-called local problems
have to be solved to determine the effective equation. Having these
issues in mind, we will only give structural (i.e. compactness) results
on homogenization problems of (evolutionary) partial differential
equations. In consequence, the compactness properties of the differential
operators as well as the ones of the coefficients play a crucial role
in homogenization theory.
\end{rem}
Regarding Proposition \ref{prop:per_impl_conv}, the right topology
for the operators under consideration is the weak operator topology.
Indeed, with the examples given in the previous section in mind and
modeling local oscillations as in Example \ref{ex:simple_dim1}, we
shall consider the weak-$*$-topology of an appropriate $L^{\infty}$-space.
Now, if we identify any $L^{\infty}$-function with the corresponding
multiplication operator on $L^{2}$, we see that convergence in the
weak-$*$-topology of the functions is equivalent to convergence of
the associated multiplication operator in the weak operator topology
of $L(L^{2})$. This general perspective also enables us to treat
problems with singular perturbations and problems of mixed type.

Before stating a first theorem concerning the issues mentioned, we
need to introduce a topology tailored for the case of autonomous and
causal material laws. 
\begin{defn}[{\cite[Definition 3.1]{Waurick}}]
\label{def:topo} For Hilbert spaces $H_{1}$, $H_{2}$ and an open
set $E\subseteq\mathbb{C},$ we define $\tau_{\textnormal{w}}$ to
be the initial topology on $\mathcal{H}^{\infty}(E,L(H_{1},H_{2}))$
induced by the mappings 
\[
\mathcal{H}^{\infty}\ni M\mapsto\left(z\mapsto\langle\phi,M(z)\psi\rangle\right)\in\mathcal{H}(E)
\]
for $\phi\in H_{2},\psi\in H_{1}$, where $\mathcal{H}(E)$ is the
set of holomorphic functions endowed with the compact open topology,
i.e., the topology of uniform convergence on compact sets. We write
$\mathcal{H}_{\textnormal{w}}^{\infty}\coloneqq\left(\mathcal{H}^{\infty},\tau_{\textnormal{w}}\right)$
for the topological space and re-use the notation $\mathcal{H}_{\textnormal{w}}^{\infty}$
for the underlying set. 
\end{defn}
We note the following remarkable fact.
\begin{thm}[{\cite[Theorem 3.4]{Waurick}, \cite[Theorem 4.3]{Waurick2013}}]
 Let $H_{1},H_{2}$ be Hilbert spaces, $E\subseteq\mathbb{C}$ open.
Then
\[
B_{\mathcal{H}^{\infty}}\coloneqq\left\{ M\in\mathcal{H}^{\infty}(E,L(H_{1},H_{2})) \,|\, \|M\|_{\infty}\leq1\right\} \subseteq\mathcal{H}_{\textnormal{w}}^{\infty}
\]
is compact. If, in addition, $H_{1}$ and $H_{2}$ are separable,
then $B_{\mathcal{H}^{\infty}}$ is metrizable.\end{thm}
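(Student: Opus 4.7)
The plan is to realize $B_{\mathcal{H}^{\infty}}$ as a closed subset of a Tychonoff product of copies of $\mathcal{H}(E)$ and then invoke Montel's theorem for compactness. Consider the map
\[
\Phi \colon \mathcal{H}^{\infty} \to \prod_{(\phi,\psi) \in H_2 \times H_1} \mathcal{H}(E), \quad M \mapsto \bigl( z \mapsto \langle \phi, M(z)\psi\rangle \bigr)_{(\phi,\psi)}.
\]
Since matrix elements determine the operator, $\Phi$ is injective, and $\tau_{\mathrm{w}}$ agrees with the subspace topology inherited from the product topology, as both are initial topologies with respect to the same family. Thus $\Phi$ embeds $\mathcal{H}_{\mathrm{w}}^{\infty}$ topologically into the product.

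For $M \in B_{\mathcal{H}^{\infty}}$ and arbitrary $\phi \in H_2$, $\psi \in H_1$, the estimate $\sup_{z \in E} |\langle \phi, M(z)\psi\rangle| \leq \|\phi\| \|\psi\|$ holds. Montel's theorem then guarantees that $K_{\phi,\psi} \coloneqq \{ f \in \mathcal{H}(E) \mid \sup_{z \in E}|f(z)| \leq \|\phi\| \|\psi\|\}$ is compact in $\mathcal{H}(E)$, since it is closed and locally uniformly bounded, hence normal. Consequently $\Phi(B_{\mathcal{H}^{\infty}}) \subseteq \prod_{(\phi,\psi)} K_{\phi,\psi}$, and the latter product is compact by Tychonoff's theorem.

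It remains to check that $\Phi(B_{\mathcal{H}^{\infty}})$ is closed in this product. Let $(\Phi(M_{\iota}))_{\iota}$ be a convergent net with limit $F = (F_{\phi,\psi})_{(\phi,\psi)}$. For each fixed $z \in E$, the form $(\phi,\psi) \mapsto F_{\phi,\psi}(z)$ is sesquilinear as a pointwise limit of sesquilinear forms and satisfies $|F_{\phi,\psi}(z)| \leq \|\phi\|\|\psi\|$. Riesz representation therefore provides a unique $M(z) \in L(H_1,H_2)$ with $\|M(z)\| \leq 1$ and $F_{\phi,\psi}(z) = \langle \phi, M(z)\psi\rangle$. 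Since each $F_{\phi,\psi}$ is holomorphic on $E$, the operator-valued map $z \mapsto M(z)$ is weakly holomorphic; by the classical theorem of Dunford, weak holomorphy of a uniformly bounded operator-valued function implies norm holomorphy. This produces $M \in B_{\mathcal{H}^{\infty}}$ with $\Phi(M) = F$, establishing closedness and hence compactness of $B_{\mathcal{H}^{\infty}}$ in $\mathcal{H}_{\mathrm{w}}^{\infty}$.

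For the metrizability statement, suppose $H_1, H_2$ are separable with countable dense subsets $D_1 \subseteq H_1$, $D_2 \subseteq H_2$. On the bounded set $B_{\mathcal{H}^{\infty}}$, the uniform estimate for the sesquilinear forms implies that the subspace topology induced by $\tau_{\mathrm{w}}$ is already generated by the countable family of maps $M \mapsto \langle \phi, M(\cdot)\psi\rangle$ with $(\phi,\psi) \in D_2 \times D_1$. Since $\mathcal{H}(E)$ with the compact-open topology is a Fr\'echet space, hence metrizable, an initial topology generated by countably many maps into $\mathcal{H}(E)$ is metrizable. The principal obstacle is the closedness argument: verifying that the pointwise-in-$z$ Riesz representatives assemble into an element of $\mathcal{H}^{\infty}$ hinges on the non-trivial fact that weak holomorphy implies norm holomorphy for bounded operator-valued maps.
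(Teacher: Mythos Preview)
Your proof is correct and follows essentially the same approach as the paper: embed $B_{\mathcal{H}^{\infty}}$ into a Tychonoff product of the sets $\{f\in\mathcal{H}(E)\mid \sup_E|f|\leq \|\phi\|\|\psi\|\}$, invoke Montel's theorem for compactness of the factors, and identify the image as closed via sesquilinearity, the Riesz--Fr\'echet representation, and a Dunford-type theorem (weak holomorphy implies norm holomorphy). The paper presents this as an equality of $B_{\mathcal{H}^{\infty}}$ with the intersection of the product and the set of pointwise-sesquilinear families, while you spell out the closedness via a net argument, but the substance is identical; the metrizability argument is likewise the standard one the paper alludes to.
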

\begin{proof}[Sketch of the proof]
 For $s\in[0,\infty[$ introduce the set $B_{\mathcal{H}(E)}(s)\coloneqq\{f\in\mathcal{H}(E)|\forall z\in E:|f(z)|\leq s\}$.
The proof is based on the following equality 
\[
B_{\mathcal{H}^{\infty}}=\left(\prod_{\phi\in H_{1},\psi\in H_{2}}B_{\mathcal{H}(E)}\left(\|\phi\|\|\psi\|\right)\right)\cap\left\{ M\colon E\to L(H_{1},H_{2})\,|\, M(z)\text{ sesquilinear }(z\in E)\right\} ,
\]
which itself follows from a Dunford type theorem ensuring the holomorphy
(with values in the space $L(H_{1},H_{2})$) of the elements on the right-hand
side and the Riesz-Fr\'{e}chet representation theorem for sesqui-linear
forms. Now, invoking Montel's theorem, we deduce that $B_{\mathcal{H}(E)}(s)$
is compact for every $s\in[0,\infty[$. Thus, Tikhonov's theorem applies
to deduce the compactness of $B_{\mathcal{H}^{\infty}}$. The proof
for metrizability is standard.
\end{proof}
Recall for $r,c\in\mathbb{R}_{>0},$ and a Hilbert space $H$, we
set 
\[
\mathcal{H}^{\infty,c}(B(r,r),L(H))=\left\{ M\in\mathcal{H}^{\infty}(B(r,r),L(H))\,|\,\bigwedge_{z\in B(r,r)}\Re z^{-1}M(z)\geq c\right\} .
\]
In accordance to Definition \ref{def:topo}, we will also write $\mathcal{H}_{\textnormal{w}}^{\infty,c}$
for the set $\mathcal{H}^{\infty,c}$ endowed with $\tau_{\textnormal{w}}$.
The compactness properties from $\mathcal{H}_{\textnormal{w}}^{\infty}$
are carried over to $\mathcal{H}_{\textnormal{w}}^{\infty,c}$. The
latter follows from the following proposition:
\begin{prop}[{\cite[Proposition 1.3]{Waurick2012a}}]
 Let $c\in\mathbb{R}_{>0}$. Then the set $\mathcal{H}_{\textnormal{w}}^{\infty,c}\subseteq\mathcal{H}_{\textnormal{w}}^{\infty}$
is closed.
\end{prop}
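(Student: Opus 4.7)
The plan is to verify sequential/net closedness directly from the definition of $\tau_{\textnormal{w}}$. Let $(M_\iota)_\iota$ be a net in $\mathcal{H}^{\infty,c}(B(r,r),L(H))$ with $M_\iota\to M$ in $\mathcal{H}_{\textnormal{w}}^{\infty}$. I would show $M\in\mathcal{H}^{\infty,c}$ by checking the defining pointwise positivity condition $\Re z^{-1}M(z)\geq c$ for every $z\in B(r,r)$.

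The first step is to translate $\tau_{\textnormal{w}}$-convergence into something usable pointwise in $z$. By definition of the initial topology, the functions $z\mapsto\langle\phi|M_\iota(z)\psi\rangle$ converge to $z\mapsto\langle\phi|M(z)\psi\rangle$ in $\mathcal{H}(B(r,r))$, i.e.\ uniformly on compact subsets of $B(r,r)$, for every $\phi,\psi\in H$. In particular, evaluating at an arbitrary fixed $z\in B(r,r)$ gives the numerical convergence
\[
\langle\phi|M_\iota(z)\psi\rangle\;\longrightarrow\;\langle\phi|M(z)\psi\rangle\qquad(\phi,\psi\in H),
\]
so $M_\iota(z)\to M(z)$ in the weak operator topology of $L(H)$ for every $z\in B(r,r)$.

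The second step is to pass to the limit in the defining scalar inequality. For each $\iota$, each $z\in B(r,r)$ and each $\phi\in H$ we have
\[
\Re\bigl(z^{-1}\langle\phi|M_\iota(z)\phi\rangle\bigr)=\Re\langle\phi|z^{-1}M_\iota(z)\phi\rangle\geq c\,\langle\phi|\phi\rangle_{H}.
\]
Because $z^{-1}$ is just a scalar and $\langle\phi|M_\iota(z)\phi\rangle\to\langle\phi|M(z)\phi\rangle$ by the previous step, $\Re$ and multiplication by $z^{-1}$ are continuous, so the inequality is preserved in the limit:
\[
\Re\langle\phi|z^{-1}M(z)\phi\rangle\geq c\,\langle\phi|\phi\rangle_{H}\qquad(\phi\in H,\;z\in B(r,r)).
\]
This is exactly the condition $\Re z^{-1}M(z)\geq c$, so $M\in\mathcal{H}^{\infty,c}$, proving closedness.

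There is no real obstacle here: the argument only uses that $\tau_{\textnormal{w}}$-convergence implies pointwise weak operator convergence and that the defining condition for $\mathcal{H}^{\infty,c}$ is a conjunction of closed scalar inequalities on the quadratic forms $\phi\mapsto\langle\phi|z^{-1}M(z)\phi\rangle$. The only conceptual point worth being careful about is that closedness must in principle be checked with nets rather than sequences, since $\mathcal{H}_{\textnormal{w}}^{\infty}$ is only known to be metrizable under the separability assumption on $H$; but the net formulation goes through verbatim.
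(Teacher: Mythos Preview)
Your argument is correct. Note, however, that the paper does not actually supply a proof of this proposition; it merely records the statement with a citation to \cite[Proposition 1.3]{Waurick2012a}. Your approach---observing that $\tau_{\textnormal{w}}$-convergence yields pointwise weak operator convergence and that the defining inequality $\Re\langle\phi|z^{-1}M(z)\phi\rangle\geq c\langle\phi|\phi\rangle_H$ is closed under such limits---is the natural and standard one, and your care in using nets rather than sequences (absent separability) is appropriate.
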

We are now ready to discuss a first theorem on the continuous dependence
on the coefficients for autonomous and causal material laws, which
particularly covers a class of homogenization problems in the sense
mentioned above. For a linear operator $A$ in some Hilbert space
$H$, we denote $D(A)$ endowed with the graph norm of $A$ by $D_{A}$.
If a Hilbert space $H_{1}$ is compactly embedded in $H$, we write
$H_{1}\hookrightarrow\hookrightarrow H$. A subset $M\subseteq\mathcal{H}^{\infty}$
is called \emph{bounded}, if there is $\lambda>0$ such that $M\subseteq\lambda B_{\mathcal{H}^{\infty}}$.
The result reads as follows.
\begin{thm}[{\cite[Theorem 3.5]{Waurick2012a}, \cite[Theorem 4.1]{Waurick2013}}]
\label{thm:Hom_1_auto_comp} Let $\nu,c\in\mathbb{R}_{>0}$, $r>\frac{1}{2\nu},$
$(M_{n})_{n}$ be a bounded sequence in $\mathbb{\mathcal{H}}^{\infty,c}(B(r,r),L(H))$, $A\colon D(A)\subseteq H\to H$ skew-selfadjoint. Assume
that $D_{A}\hookrightarrow\hookrightarrow H$. Then there exists a
subsequence of $(M_{n})_{n}$ such that $(M_{n_{k}})_{k}$ converges
in $\mathcal{H}_{\textnormal{w}}^{\infty}$ to some $M\in\mathcal{H}^{\infty,c}$
and
\[
\left(\overline{\partial_{0}M_{n_{k}}(\partial_{0}^{-1})+A}\right)^{-1}\to\left(\overline{\partial_{0}M(\partial_{0}^{-1})+A}\right)^{-1}
\]
in the weak operator topology.
\end{thm}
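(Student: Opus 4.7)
The plan is to combine the compactness of bounded sets in $\mathcal{H}^{\infty}_{\mathrm{w}}$ (to extract a coefficient limit $M$) with the compact embedding $D_A \hookrightarrow\hookrightarrow H$ (to pass to the limit in the product $M_n(\partial_0^{-1}) u_n$), and to conclude via uniqueness from Theorem \ref{thm:sol_theo_skew}.

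First, since $(M_n)_n$ is bounded in $\mathcal{H}^{\infty}$, it lies in some closed ball $\lambda B_{\mathcal{H}^{\infty}}$, which is compact in $\mathcal{H}^{\infty}_{\mathrm{w}}$. Extracting a subsequence (still denoted $(M_n)_n$), I obtain $M_n \to M$ in $\tau_{\mathrm{w}}$, and the closedness of $\mathcal{H}^{\infty,c}_{\mathrm{w}}$ inside $\mathcal{H}^{\infty}_{\mathrm{w}}$ forces $M \in \mathcal{H}^{\infty,c}$. Consequently Theorem \ref{thm:sol_theo_skew} applies simultaneously to $M$ and to each $M_n$, and the Lipschitz bound from Minty's Theorem \ref{thm:Minty} yields the inverses $(\overline{\partial_0 M_n(\partial_0^{-1}) + A})^{-1}$ as a family of uniformly bounded operators on $L_\nu^2(\mathbb{R},H)$ with a bound depending only on $c$ and $\nu$. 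For fixed $f \in L_\nu^2(\mathbb{R},H)$, the solutions $u_n := (\overline{\partial_0 M_n(\partial_0^{-1}) + A})^{-1} f$ then form a bounded sequence, and I extract a further subsequence with $u_n \rightharpoonup u$ weakly in $L_\nu^2(\mathbb{R},H)$.

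The crux is to identify $u$ as the solution of the limit equation $(\overline{\partial_0 M(\partial_0^{-1}) + A})\,u = f$. Rewriting the equation as $M_n(\partial_0^{-1})\,u_n = \partial_0^{-1} f - \partial_0^{-1} A u_n$, the term $\partial_0^{-1} A u_n$ converges weakly to $\partial_0^{-1} A u$ thanks to the closedness of $A$ and the boundedness of $\partial_0^{-1}$, but the left-hand side $M_n(\partial_0^{-1}) u_n \rightharpoonup M(\partial_0^{-1}) u$ is non-trivial since both factors converge only weakly. Here the assumption $D_A \hookrightarrow\hookrightarrow H$ is decisive: the skew-selfadjoint operator $A$ has compact resolvent, so its spectrum is discrete with finite-dimensional spectral projections $P_k$ onto which one can truncate. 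On each $P_k H$ weak convergence of $u_n$ upgrades to norm convergence, and together with the $\tau_{\mathrm{w}}$-convergence of $M_n$ (which in particular gives uniform convergence of $M_n(z)P_k \to M(z)P_k$ on compacts) the product limit can be taken. The tail in $k$ is then controlled uniformly by combining the $\mathcal{H}^{\infty,c}$-coercivity of the $M_n$ with the growth of the eigenvalues of $A$. I expect this compensated-compactness-style argument for the product term to be the main technical obstacle.

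Once the identity $(\partial_0 M(\partial_0^{-1}) + A)\,u = f$ is established, Theorem \ref{thm:sol_theo_skew} ensures $u = (\overline{\partial_0 M(\partial_0^{-1}) + A})^{-1} f$ uniquely. A standard subsequence-of-subsequences argument then upgrades weak subsequential convergence to convergence of the full extracted subsequence, and since $f$ was arbitrary, the inverses converge in the weak operator topology as claimed.
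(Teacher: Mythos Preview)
The paper does not supply a proof of this theorem; it only cites external references. Your outline follows the natural strategy---extract a $\tau_{\mathrm w}$-convergent subsequence of coefficients, show the corresponding solutions converge weakly, and identify the weak limit via the limit equation---but the mechanism you propose for passing to the limit in the product $M_n(\partial_0^{-1})u_n$ does not work as stated.

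The central error is the assertion that ``on each $P_kH$ weak convergence of $u_n$ upgrades to norm convergence.'' The projected functions $P_ku_n$ lie in $L_\nu^2(\mathbb R,P_kH)$; even though $P_kH$ is finite-dimensional, $L_\nu^2(\mathbb R,P_kH)$ is not, and weak convergence there remains weak. Your spectral truncation in the $H$-variable gains nothing against the time direction, which is precisely where the difficulty sits. There is a second issue: $\tau_{\mathrm w}$-convergence does \emph{not} give $M_n(z)P_k\to M(z)P_k$ in operator norm (the range is still all of $H$); at best one gets $P_kM_n(z)P_k\to P_kM(z)P_k$, and even that only uniformly on compact subsets of $B(r,r)$, while the relevant values $\tfrac{1}{\mathrm i\xi+\nu}$ accumulate at the boundary point $0$ as $|\xi|\to\infty$. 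Finally, you silently assume $u_n\in D(A)$ with $(Au_n)_n$ bounded; this requires a regularity step (e.g.\ restricting first to $f\in D(\partial_{0,\nu})$ and extending by density and the uniform bound) that you do not mention.

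The way $D_A\hookrightarrow\hookrightarrow H$ is actually exploited in the cited references is via the Fourier--Laplace transform: the solution operator becomes multiplication by $\bigl((\mathrm i\xi+\nu)M_n(\tfrac{1}{\mathrm i\xi+\nu})+A\bigr)^{-1}$ on $L^2(\mathbb R,H)$. For each fixed $\xi$ this resolvent maps $H$ boundedly into $D_A$ and is therefore compact on $H$; combining this with the weak-operator convergence $M_n(\tfrac{1}{\mathrm i\xi+\nu})\to M(\tfrac{1}{\mathrm i\xi+\nu})$ in $L(H)$, a compactness/uniqueness-of-limit argument \emph{in the Hilbert space $H$} (not in $L_\nu^2(\mathbb R,H)$) yields strong convergence of the pointwise resolvents. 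Dominated convergence in $\xi$, using the uniform bound $1/c$, then delivers convergence of the full solution operator. In short, the compact embedding is used pointwise in the spectral parameter after the time variable has been transformed away, not through a spatial spectral truncation of the time-dependent problem.
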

We first apply this theorem to an elliptic type equation.
\begin{example}
\label{ex:hom_dirich}Let $\Omega\subseteq\mathbb{R}^{n}$ be open
and bounded. Let $\grad_{c}$ and $\dive$ be the operators introduced
in Definition \ref{def:div_grad}. Let $\left(a_{k}\right)_{k\in\mathbb{N}}$
be a sequence of uniformly strictly positive bounded linear operators
in $L^{2}(\Omega)^{n}$. For $f\in L^{2}(\Omega)$ consider for $k\in\mathbb{N}$
the problem of finding $u_{k}\in L^{2}(\Omega)$ such that the equation
\[
u_{k}-\dive a_{k}\grad_{c}u_{k}=f
\]
holds. Observe that if $\iota\colon R(\grad_{c})\to L^{2}(\Omega)^{n}$
denotes the canonical embedding, this equation is the same as
\begin{equation}
u_{k}-\dive\iota\iota^{*}a_{k}\iota\iota^{*}\grad_{c}u_{k}=f.\label{eq:mod_ell}
\end{equation}
Indeed, by Poincar{\'e}'s inequality $R(\grad_{c})\subseteq L^{2}(\Omega)^{n}$
is closed, the projection theorem ensures that $\iota\iota^{*}$ is
the orthogonal projection on $R(\grad_{c})$. Moreover, $N(\dive)=R(\grad_{c})^{\bot}$
yields that $\dive=\dive(\iota\iota^{*}+(1-\iota\iota^{*}))=\dive\iota\iota^{*}.$
Now, we realize that due to the positive definiteness of $a_{k}$
so is $\iota^{*}a_{k}\iota$. Consequently, the latter operator is
continuously invertible. Introducing $v_{k}\coloneqq\iota^{*}a_{k}\iota\grad_{c}u_{k}$
for $k\in\mathbb{N},$ we rewrite the equation (\ref{eq:mod_ell})
as follows
\[
\left(\left(\begin{array}{cc}
1 & 0\\
0 & \left(\iota^{*}a_{k}\iota\right)^{-1}
\end{array}\right)-\left(\begin{array}{cc}
0 & \dive\iota\\
\iota^{*}\grad_{c} & 0
\end{array}\right)\right)\left(\begin{array}{c}
u_{k}\\
v_{k}
\end{array}\right)=\left(\begin{array}{c}
f\\
0
\end{array}\right).
\]
Now, let $\nu>0$ and lift the above problem to the space $L_{\nu}^{2}(\mathbb{R},L^{2}(\Omega)\oplus R(\grad_{c}))$
by interpreting $\left(\begin{array}{c}
f\\
0
\end{array}\right)$ as $\left(t\mapsto\1_{_{\mathbb{R}_{>0}}}(t)\left(\begin{array}{c}
f\\
0
\end{array}\right)\right)\in L_{\nu}^{2}(\mathbb{R},L^{2}(\Omega)\oplus R(\grad_{c}))$. Then this equation fits into the solution theory stated in Theorem
\ref{thm:sol_theo_skew} with 
\[
M_{k}(\partial_{0}^{-1})\coloneqq\partial_{0}^{-1}\left(\begin{array}{cc}
1 & 0\\
0 & \left(\iota^{*}a_{k}\iota\right)^{-1}
\end{array}\right),\quad A\coloneqq\left(\begin{array}{cc}
0 & \dive\iota\\
\iota^{*}\grad_{c} & 0
\end{array}\right).
\]
Note that the skew-selfadjointness of $A$ is easily obtained from
$\dive^{*}=-\grad_{c}$. In order to conclude the applicability of
Theorem \ref{thm:Hom_1_auto_comp}, we need the following observation.\end{example}
\begin{prop}[{\cite[Lemma 4.1]{Waurick2012a}}]
\label{prop:null_away_comp} Let $H_{1},$ $H_{2}$ be Hilbert spaces,
$C\colon D(C)\subseteq H_{1}\to H_{2}$ densely defined, closed, linear.
Assume that $D_{C}\hookrightarrow\hookrightarrow H_{1}$. Then $D_{C^{*}}\cap N(C^{*})^{\bot_{H_{2}}}\hookrightarrow\hookrightarrow H_{2}$.\end{prop}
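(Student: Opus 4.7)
The plan is to transport the compact embedding through the polar decomposition of $C$. Let $|C|\coloneqq (C^{*}C)^{1/2}$ be the non-negative selfadjoint modulus of $C$ on $H_{1}$; then $D(|C|)=D(C)$ with $\||C|x\|_{H_{1}}=\|Cx\|_{H_{2}}$ for all $x\in D(C)$, so the identity map is an isometry between $D_{C}$ and $D_{|C|}$. Write the polar decomposition $C=U|C|$, where $U$ is the partial isometry with initial space $N(C)^{\bot}$ and final space $\overline{R(C)}=N(C^{*})^{\bot}$. Then $U$ restricts to a unitary $U_{0}\colon N(C)^{\bot}\to N(C^{*})^{\bot}$, and the adjoint satisfies $C^{*}=|C|U^{*}$ together with the intertwining identity $|C^{*}|=U|C|U^{*}$ for $|C^{*}|\coloneqq (CC^{*})^{1/2}$. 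In particular $y\in D(C^{*})\cap N(C^{*})^{\bot}$ if and only if $U_{0}^{*}y\in D(|C|)\cap N(C)^{\bot}=D(C)\cap N(C)^{\bot}$, and then $\|C^{*}y\|_{H_{1}}=\||C^{*}|y\|_{H_{2}}=\||C|U_{0}^{*}y\|_{H_{1}}=\|CU_{0}^{*}y\|_{H_{2}}$.

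Given this set-up, the proof is essentially immediate. Let $(y_{n})_{n}$ be bounded in $D(C^{*})\cap N(C^{*})^{\bot}$ with respect to the graph norm of $C^{*}$, and set $x_{n}\coloneqq U_{0}^{*}y_{n}\in D(C)\cap N(C)^{\bot}$. By the identities above, $\|x_{n}\|_{H_{1}}=\|y_{n}\|_{H_{2}}$ and $\|Cx_{n}\|_{H_{2}}=\|C^{*}y_{n}\|_{H_{1}}$, so $(x_{n})_{n}$ is bounded in $D_{C}$. By the hypothesis $D_{C}\hookrightarrow\hookrightarrow H_{1}$, a subsequence $(x_{n_{k}})_{k}$ converges in $H_{1}$, and applying the bounded operator $U_{0}$ produces $y_{n_{k}}=U_{0}x_{n_{k}}\to U_{0}\lim_{k}x_{n_{k}}$ convergent in $H_{2}$, which is precisely the required compactness.

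The main technical point is having the polar decomposition and the intertwining $|C^{*}|=U|C|U^{*}$ available for \emph{unbounded}, closed, densely defined operators between two distinct Hilbert spaces; this is classical (see, e.g., Kato, \emph{Perturbation Theory for Linear Operators}, Chapter VI). Once these identities are granted, the argument is simply a transport of compactness along the unitary $U_{0}$. Notably, one does not need a separate proof that $R(C)$ is closed, although such a Poincar\'e-type estimate on $D(C)\cap N(C)^{\bot}$ falls out of the compact embedding by the standard contradiction argument and reconciles the present statement with the setting of Theorem \ref{thm: STHE} in which $|B_{G}|$ appears.
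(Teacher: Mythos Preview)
Your argument is correct. The paper itself does not prove this proposition but merely cites \cite[Lemma 4.1]{Waurick2012a}, so there is no in-paper proof to compare against. Your route via the polar decomposition $C=U|C|$ and the induced unitary $U_{0}\colon N(C)^{\bot}\to N(C^{*})^{\bot}$ is a clean and standard way to transport the compact embedding. One small simplification: the identity $C^{*}=|C|U^{*}$ already gives $\|C^{*}y\|_{H_{1}}=\||C|U_{0}^{*}y\|_{H_{1}}=\|CU_{0}^{*}y\|_{H_{2}}$ directly for $y\in N(C^{*})^{\bot}$, so the intermediate appeal to the intertwining relation $|C^{*}|=U|C|U^{*}$ is not actually needed.
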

\begin{example}[Example \ref{ex:hom_dirich} continued]
 With the help of the theorem of Rellich-Kondrachov and Proposition
\ref{prop:null_away_comp}, we deduce that $A$ has compact resolvent.
Thus, Theorem \ref{thm:Hom_1_auto_comp} is applicable. We find a
subsequence such that $\mathfrak{a}\coloneqq\tau_{\textnormal{w}}-\lim_{l\to\infty}\left(\iota^{*}a_{k_{l}}\iota\right)^{-1}$
exists, where we denoted by $\tau_{\textnormal{w}}$ the weak operator
topology. Therefore, $(u_{k_{l}})_{l}$ weakly converges to some $u$,
which itself is the solution of
\[
u-\dive\iota\mathfrak{a}^{-1}\iota^{*}\grad_{c}u=f.
\]
In fact it is possible to show that $\iota\mathfrak{a}^{-1}\iota^{*}$
coincides with the usual homogenized matrix (if the possibly additional
assumptions on the sequence $(a_{k})_{k}$ permit the computation
of a limit in the sense of $H$- or $G$-convergence, see e.g. \cite[Chapter 13]{CioDon}
and the references therein).
\end{example}
As a next example let us consider the heat equation.
\begin{example}
\label{ex:heat_hom}Recall the heat equation introduced in Example
\ref{ex:heat_eq}:
\[
\left(\partial_{0}\left(\begin{array}{cc}
1 & 0\\
0 & 0
\end{array}\right)+\left(\begin{array}{cc}
0 & 0\\
0 & k^{-1}
\end{array}\right)+\left(\begin{array}{cc}
0 & \dive\\
\grad_{c} & 0
\end{array}\right)\right)\left(\begin{array}{c}
\theta\\
q
\end{array}\right)=\left(\begin{array}{c}
f\\
0
\end{array}\right).
\]
To warrant the compactness condition in Theorem \ref{thm:Hom_1_auto_comp},
we again assume that the underlying domain $\Omega$ is bounded. Similarly
to Example \ref{ex:hom_dirich}, we assume that we are given $(k_{l})_{l}$,
a bounded sequence of uniformly strictly monotone linear operators
in $L(L^{2}(\Omega)^{n})$. Consider the sequence of equations
\[
\left(\partial_{0}\left(\begin{array}{cc}
1 & 0\\
0 & 0
\end{array}\right)+\left(\begin{array}{cc}
0 & 0\\
0 & k_{l}^{-1}
\end{array}\right)+\left(\begin{array}{cc}
0 & \dive\\
\grad_{c} & 0
\end{array}\right)\right)\left(\begin{array}{c}
\theta_{l}\\
q_{l}
\end{array}\right)=\left(\begin{array}{c}
f\\
0
\end{array}\right).
\]
Now, focussing \emph{only} on the behavior of the temperature $(\theta_{l})_{l}$,
we can proceed as in the previous example. 
\end{example}
Assuming more regularity of $\Omega$, e.g., the segment property
and finitely many connected components, we can apply Theorem \ref{thm:Hom_1_auto_comp}
also to the corresponding homogeneous Neumann problems of Examples
\ref{ex:hom_dirich} and \ref{ex:heat_hom}. Moreover, the aforementioned
theorem can also be applied to the homogenization of (visco-)elastic
problems (see also Example \ref{ex:visco_elastic}). For this we need
criteria ensuring the compactness condition $D_{\Grad_{c}}\hookrightarrow L^{2}(\Omega)^{n}$
(or $D_{\Grad}\hookrightarrow L^{2}(\Omega)^{n}$). The latter is
warranted for a bounded $\Omega$ for the homogeneous Dirichlet case
or an $\Omega$ satisfying suitable geometric requirements (see e.g.
\cite{Weck1994}). An example of a different type of nature is that
of Maxwell's equations:
\begin{example}
\label{ex:max_hom}Recall Maxwell's equation as introduced in Example
\ref{ex:Maxwell}:
\[
\left(\partial_{0}\left(\begin{array}{cc}
\varepsilon & 0\\
0 & \mu
\end{array}\right)+\left(\begin{array}{cc}
\sigma & 0\\
0 & 0
\end{array}\right)+\left(\begin{array}{cc}
0 & -\curl\\
\curl_{c} & 0
\end{array}\right)\right)\left(\begin{array}{c}
E\\
H
\end{array}\right)=\left(\begin{array}{c}
J\\
0
\end{array}\right).
\]
In this case, we also want to consider sequences $(\varepsilon_{n})_{n},(\mu_{n})_{n},(\sigma_{n})_{n}$
and corresponding solutions $(E_{n},H_{n})_{n}$. In any case the
nullspaces of both $\curl_{c}$ and $\curl$ are infinite-dimensional.
Thus, the projection mechanism introduced above for the heat and the
elliptic equation cannot apply in the same manner. Moreover, considering
the Maxwell's equations on the nullspace of $\left(\begin{array}{cc}
0 & -\curl\\
\curl_{c} & 0
\end{array}\right)$, we realize that the equation amounts to be an \emph{ordinary} differential
equation in an \emph{infinite-dimensional} state space. For the latter
we have not stated any homogenization or continuous dependence result
yet. Thus, before dealing with Maxwell's equations in full generality,
we focus on ordinary (integro-)differential equations next.\end{example}
\begin{thm}[{\cite[Theorem 4.4]{Waurick2012}}]
\label{thm:ode_degen}Let $\nu,\epsilon\in\mathbb{R}_{>0}$, $r>\frac{1}{2\nu}$,
$(M_{n})_{n}$ in $\mathbb{\mathcal{H}}^{\infty,c}(B(r,r),L(H))\cap\mathbb{\mathcal{H}}^{\infty}(B(0,\epsilon),L(H))$
bounded, $H$ separable Hilbert space. Assume that\footnote{Note that $M_n\in \mathbb{\mathcal{H}}^{\infty,c}(B(r,r),L(H))\cap\mathbb{\mathcal{H}}^{\infty}(B(0,\epsilon),L(H))$ implies that $M_n(0)$ is selfadjoint.} 
\begin{align*}
M_{n}(0) & \geq c\text{ on }R(M_{n}(0))=R(M_{1}(0))
\end{align*}
for all $n\in\mathbb{N}$. Then there exists a subsequence $(n_{k})_{k}$
of $(n)_{n}$ and some $M\in\mathcal{H}^{\infty}$ such that
\[
\left(\partial_{0}M_{n_{k}}(\partial_{0}^{-1})\right)^{-1}\to\left(\partial_{0}M(\partial_{0}^{-1})\right)^{-1}
\]
in the weak operator topology.\end{thm}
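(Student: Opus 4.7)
The plan is to combine the $\tau_w$-compactness of bounded subsets of $\mathcal{H}^\infty$ with a structural decomposition induced by the common-range hypothesis on the $M_n(0)$. First, using that $B_{\mathcal{H}^\infty}$ is compact in $\mathcal{H}_w^\infty$ and that $\mathcal{H}^{\infty,c}$ is closed in $\mathcal{H}_w^\infty$, I extract a subsequence (relabeled $(M_n)_n$) converging in $\tau_w$ to some $M\in\mathcal{H}^{\infty,c}(B(r,r),L(H))$. For any fixed $f\in L_\nu^2(\mathbb{R},H)$, the solution theory (Theorem \ref{thm:sol_theo_skew} with $A=0$) yields uniformly bounded solutions $u_n\coloneqq(\partial_0 M_n(\partial_0^{-1}))^{-1}f$, and, exploiting separability of $H$ by a diagonal argument over $f$ in a countable dense set, I extract a further subsequence along which $u_n\rightharpoonup u$ weakly in $L_\nu^2(\mathbb{R},H)$.

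Next I would exploit the common-range structure. Let $P$ be the orthogonal projection onto $R(M_1(0))$ and set $P^\perp\coloneqq 1-P$. Sampling the inequality $\Re z^{-1}M_n(z)\geq c$ along the rays $z=\epsilon(1+\ii t)$ as $\epsilon\to 0^+$ forces $M_n(0)$ to be selfadjoint and non-negative; combined with the hypothesis, this yields the block structure $M_n(0)=PM_n(0)P$ with $PM_n(0)P\geq c$ on $PH$. Writing $M_n(z)=M_n(0)+zR_n(z)$ with $R_n$ analytic on $B(0,\epsilon)\cup B(r,r)$, and applying $P^\perp$ on both sides of the monotonicity inequality, I obtain $\Re P^\perp R_n(z)P^\perp\geq c$ on $B(r,r)$. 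Consequently $C_n(\partial_0^{-1})\coloneqq P^\perp R_n(\partial_0^{-1})P^\perp$ is uniformly boundedly invertible on $L_\nu^2(\mathbb{R},P^\perp H)$. Projecting $\partial_0 M_n(\partial_0^{-1})u_n=f$ by $P^\perp$ yields the purely algebraic equation $C_n(\partial_0^{-1})P^\perp u_n=P^\perp f-P^\perp R_n(\partial_0^{-1})Pu_n$, which expresses $P^\perp u_n$ as a uniformly bounded affine function of $Pu_n$; substitution into the $P$-projection produces a non-degenerate evolutionary equation on $L_\nu^2(\mathbb{R},PH)$ whose constant-coefficient part $PM_n(0)P$ is uniformly strictly positive definite.

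The main obstacle will be the passage to the limit, since $\tau_w$-convergence of $M_n$ together with weak convergence of $u_n$ does not in general yield convergence of products such as $M_n(\partial_0^{-1})u_n$. My approach is to work on the Fourier--Laplace side: the solution operator is multiplication by the $L(H)$-valued symbol $\xi\mapsto S_n(\xi)\coloneqq((\ii\xi+\nu)M_n((\ii\xi+\nu)^{-1}))^{-1}$, which is uniformly bounded in $n$ and $\xi$ by $1/c$ thanks to the positivity. The block reduction above shows that, pointwise in $\xi$, the symbol $S_n(\xi)$ is a rational (block-Schur-complement) expression in the uniformly invertible blocks $PM_n(0)P$ on $PH$, $C_n((\ii\xi+\nu)^{-1})$ on $P^\perp H$, and the off-diagonal pieces $PR_nP^\perp$, $P^\perp R_nP$ evaluated at $(\ii\xi+\nu)^{-1}$; the $\tau_w$-convergence $M_n\to M$ therefore implies pointwise-in-$\xi$ weak operator convergence of $S_n(\xi)$ to the corresponding symbol $S(\xi)$, where the inversions in the rational expression can be controlled by the uniform positivity bounds. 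Dominated convergence in the pairing $\int_{\mathbb{R}}\langle\phi(\xi),S_n(\xi)\psi(\xi)\rangle\,d\xi$ against arbitrary test functions $\phi,\psi$ then yields weak operator topology convergence of the solution operators on $L_\nu^2(\mathbb{R},H)$; identifying the limit $u$ as the unique solution of $\partial_0 M(\partial_0^{-1})u=f$ via Theorem \ref{thm:sol_theo_skew} and invoking the standard subsequence-uniqueness argument removes the need for further extraction.
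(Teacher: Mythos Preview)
The paper does not prove this theorem (it is cited from \cite{Waurick2012}), so there is no paper proof to compare against. Nonetheless, your proposal contains a genuine gap.

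Your argument hinges on the claim that $\tau_w$-convergence $M_n\to M$ together with the uniform positivity $\Re z^{-1}M_n(z)\geq c$ implies pointwise weak operator convergence of the inverse symbols $S_n(\xi)=((\ii\xi+\nu)M_n((\ii\xi+\nu)^{-1}))^{-1}$ to the symbol $S(\xi)$ built from $M$ by the same Schur-complement formula. This inference is false: weak operator convergence of uniformly strictly accretive operators does \emph{not} imply weak operator convergence of their inverses. The Remark immediately following the theorem makes this explicit, giving a periodic multiplication example in which $a_n\to\tfrac{3}{4}$ but $a_n^{-1}\to\tfrac{3}{2}\neq\tfrac{4}{3}$ in $\sigma(L^\infty,L^1)$. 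In your block reduction the same obstruction arises when you invert $C_n(\partial_0^{-1})=P^\perp R_n(\partial_0^{-1})P^\perp$: the $\tau_w$-limit of $C_n$ does not, in general, determine the weak limit of $C_n^{-1}$, so the ``corresponding symbol $S(\xi)$'' you construct from the $\tau_w$-limit $M$ is not the weak limit of $S_n(\xi)$.

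The point of the theorem is precisely that the limiting material law $M$ is \emph{not} the $\tau_w$-limit of the $M_n$; rather, one must extract convergent subsequences of the \emph{inverse} quantities (or of suitable Schur-complement pieces such as $C_n^{-1}$ and the reduced operator on $PH$) directly, using compactness of bounded sets in $\mathcal{H}_w^\infty$, and then assemble the limiting $M$ from those limits. Your block decomposition and the identification of the algebraic $P^\perp$-component are on the right track, but the passage to the limit must be carried out on the solution operators themselves rather than by first taking the $\tau_w$-limit of $M_n$ and then inverting.
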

\begin{rem}
Note that in the latter theorem, in general, the sequence $\left(M_{n_{k}}(\partial_{0}^{-1})\right)_{k}$
does \emph{not }converge to $M(\partial_{0}^{-1})$. The reason for
that is that the computation of the inverse is not continuous in the
weak operator topology. So, even if one chose a further subsequence
$(n_{k_{l}})_{l}$ of $(n_{k})_{k}$ such that $\left(M_{n_{k_{l}}}(\partial_{0}^{-1})\right)_{l}$
converges in the weak operator topology, then, in general, 
\[
M_{n_{k_{l}}}(\partial_{0}^{-1})\nrightarrow M(\partial_{0}^{-1})
\]
 in $\tau_{\textnormal{w}}$. Indeed, the latter can be seen by considering
the periodic extensions of the mappings $a^{1},a^{2}$ to all of $\mathbb{R}$
with 
\[
a^{1}(x)\coloneqq\begin{cases}
\frac{1}{2}, & 0\leq x<\frac{1}{2},\\
1, & \frac{1}{2}\leq x<1,
\end{cases}\quad a^{2}(x)\coloneqq\frac{3}{4},\quad(x\in[0,1]).
\]
We let $a_{n}\coloneqq a^{1}(n\cdot)$ for odd $n\in\mathbb{N}$ and
$a_{n}\coloneqq a^{2}(n\cdot)$ if $n\in\mathbb{N}$ is even. Then,
by Proposition \ref{prop:per_impl_conv}, we conclude that $a_{n}\to\frac{3}{4}$,
$a_{2n+1}^{-1}\to\frac{3}{2},$ and $a_{2n}^{-1}\to\frac{4}{3}$ as $n\to\infty$
in $\sigma(L^{\infty},L^{1})$. 
\end{rem}
In a way complementary to the latter theorem is the following. The latter theorem assumes analyticity of the $M_{n}$'s at $0$.
But the zero'th order term in the power series expansion of the $M_{n}$'s
may be non-invertible. In the next theorem, the analyticity at $0$
is not assumed any more. The (uniform) positive definiteness condition,
however, is more restrictive.
\begin{thm}[{\cite[Theorem 5.2]{Waurick}}]
\label{thm:ode_non_degen}Let $\nu,\epsilon\in\mathbb{R}_{>0}$,
$r>\frac{1}{2\nu},$ $(M_{n})_{n}$ in $\mathbb{\mathcal{H}}^{\infty,c}(B(r,r),L(H))$
bounded, $H$ separable Hilbert space. Assume that 
\[
\Re M_{n}(z)\geq c\quad(z\in B(r,r))
\]
for all $n\in\mathbb{N}$. Then there exists a subsequence $(M_{n_{k}})_{k}$
of $(M_{n})_{n}$ and some $M\in\mathcal{H}^{\infty}$ such that
\[
\left(\partial_{0}M_{n_{k}}(\partial_{0}^{-1})\right)^{-1}\to\left(\partial_{0}M(\partial_{0}^{-1})\right)^{-1}
\]
in the weak operator topology.
\end{thm}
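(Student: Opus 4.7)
The plan is to avoid passing to a $\tau_{\textnormal{w}}$-convergent subsequence of $(M_n)_n$ directly, since pointwise inversion is not continuous in the weak operator topology. Instead, work with the sequence of pointwise inverses $N_n(z) \coloneqq M_n(z)^{-1}$, which is well defined by the hypothesis $\Re M_n(z) \geq c$ (yielding $\|N_n(z)\| \leq 1/c$ via Cauchy--Schwarz), and identify the limit only after inversion. This capitalizes on the stronger positivity of the hypothesis (compared with the condition in (\ref{eq:pos_material_law})), which provides extra structure for the inverse sequence.

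First, each $N_n$ is analytic with $\|N_n\|_\infty \leq 1/c$, so $(N_n)_n$ is bounded in $\mathcal{H}^{\infty}(B(r,r),L(H))$. By the compactness and metrizability (using separability of $H$) of closed balls in $\mathcal{H}_{\textnormal{w}}^{\infty}$ stated prior to the theorem under consideration, extract a subsequence $(N_{n_k})_k \to N$ in $\tau_{\textnormal{w}}$ for some $N \in \mathcal{H}^{\infty}$. A direct computation shows $\Re N_n(z) \geq c/K^2$, where $K \coloneqq \sup_n \|M_n\|_\infty$: for $\psi \in H$, set $\phi \coloneqq N_n(z)\psi$, so $\psi = M_n(z)\phi$ and $\|\phi\| \geq \|\psi\|/K$; then
\[
\Re \langle \psi | N_n(z) \psi \rangle = \Re \langle M_n(z) \phi | \phi \rangle = \Re \langle \phi | M_n(z) \phi \rangle \geq c \|\phi\|^2 \geq \frac{c}{K^2}\|\psi\|^2.
\]
By the closedness of $\mathcal{H}_{\textnormal{w}}^{\infty,c/K^2}$ in $\mathcal{H}_{\textnormal{w}}^{\infty}$, the limit $N$ inherits this bound, hence $N(z)$ is pointwise invertible with $\|N(z)^{-1}\| \leq K^2/c$. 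Setting $M(z) \coloneqq N(z)^{-1}$ thus yields the desired element $M \in \mathcal{H}^{\infty}$.

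To finish, translate $\tau_{\textnormal{w}}$-convergence into weak operator convergence of the induced operators on $L_\nu^2(\mathbb{R},H)$. Via the representation $N_n(\partial_0^{-1}) = \mathcal{L}_\nu^{\ast} N_n\bigl(\tfrac{1}{\ii \mathrm{m} + \nu}\bigr) \mathcal{L}_\nu$, for $u, v \in L_\nu^2(\mathbb{R},H)$
\[
\langle u | N_{n_k}(\partial_0^{-1}) v \rangle = \int_{\mathbb{R}} \bigl\langle (\mathcal{L}_\nu u)(t) \bigm| N_{n_k}\bigl(\tfrac{1}{\ii t+\nu}\bigr) (\mathcal{L}_\nu v)(t) \bigr\rangle \, dt.
\]
Since $1/(\ii t + \nu) \in B(r,r)$ for every $t \in \mathbb{R}$ (from $r > 1/(2\nu)$), $\tau_{\textnormal{w}}$-convergence provides pointwise-in-$t$ convergence of the integrand, and the $L^1$-dominant $(1/c)\|(\mathcal{L}_\nu u)(\cdot)\|\,\|(\mathcal{L}_\nu v)(\cdot)\|$ (integrable by Cauchy--Schwarz and Plancherel) enables dominated convergence, yielding $N_{n_k}(\partial_0^{-1}) \to N(\partial_0^{-1})$ in the weak operator topology. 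Multiplication by the fixed bounded operator $\partial_0^{-1}$ preserves this convergence, and the functional calculus identifies $(\partial_0 M_n(\partial_0^{-1}))^{-1} = \partial_0^{-1} N_n(\partial_0^{-1})$ and analogously for the limit, so the claim follows.

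The hard part is the invertibility of the limit $N$ in the middle step: without the lower bound on $\Re N_n$ one cannot define $M \coloneqq N^{-1}$; moreover, a direct attempt to pass to the limit in the relation $M_n N_n = I$ (using both $M_n \to M$ and $N_n \to N$ in $\tau_{\textnormal{w}}$) is bound to fail, because products of WOT-convergent operator sequences are not WOT-convergent in general. Thus the strengthened hypothesis $\Re M_n(z) \geq c$, via the derived uniform bound $\Re N_n(z) \geq c/K^2$ and the closedness of $\mathcal{H}_{\textnormal{w}}^{\infty,c/K^2}$, is precisely what drives the argument and distinguishes the proof from the more subtle situation of Theorem \ref{thm:ode_degen}.
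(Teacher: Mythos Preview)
The paper does not supply a proof of this theorem; it is stated with a reference to \cite[Theorem 5.2]{Waurick}. Your argument is correct and is the natural route: the strengthened hypothesis $\Re M_n(z)\geq c$ lets you invert \emph{first}, obtaining a bounded sequence $(N_n)_n$ in $\mathcal{H}^{\infty}$ with $\Re N_n(z)\geq c/K^2$, then apply the compactness/metrizability of bounded sets in $\mathcal{H}_{\textnormal{w}}^{\infty}$ to extract a limit $N$, and finally set $M\coloneqq N^{-1}$. The passage from $\tau_{\textnormal{w}}$-convergence of $(N_{n_k})_k$ to weak operator convergence of $N_{n_k}(\partial_0^{-1})$ via Fourier--Laplace and dominated convergence is sound, and the identification $(\partial_0 M_n(\partial_0^{-1}))^{-1}=\partial_0^{-1}N_n(\partial_0^{-1})$ through the functional calculus closes the argument.

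One small imprecision: when you appeal to ``the closedness of $\mathcal{H}_{\textnormal{w}}^{\infty,c/K^2}$'', recall that in this paper $\mathcal{H}^{\infty,c}$ is defined via the condition $\Re z^{-1}M(z)\geq c$, not $\Re M(z)\geq c$. What you actually need is that the set $\{N\in\mathcal{H}^{\infty}\mid \forall z\in B(r,r):\Re N(z)\geq c/K^2\}$ is $\tau_{\textnormal{w}}$-closed. This is immediate: $\tau_{\textnormal{w}}$-convergence entails $N_{n_k}(z)\to N(z)$ in the weak operator topology for each fixed $z\in B(r,r)$, and the inequality $\Re\langle\phi|N_{n_k}(z)\phi\rangle\geq (c/K^2)\langle\phi|\phi\rangle$ survives the limit for every $\phi\in H$. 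With this adjustment your proof stands.
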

Now, we turn to more concrete examples. With the methods developed,
we can characterize the convergence of a particular ordinary equation.
In a slightly more restrictive context these types of equations have
been discussed by Tartar in 1989 (see \cite{TartarMemEff,TartarNonlHom})
using the notion of Young-measures, see also the discussion in \cite[Remark 3.8]{Waurick2013a}. 
\begin{prop}[]
Let $(a_{n})_{n}$ in $L(H)$ be bounded, $H$ a separable Hilbert
space, $\nu>2\sup_{n\in\mathbb{N}}\|a_{n}\|+1$. Then
\[
\left(\left(\partial_{0}+a_{n}\right)^{-1}\right)_{n}
\]
converges in the weak operator topology if and only if for all $\ell\in\mathbb{N}$
\[
\left(a_{n}^{\ell}\right)_{n}
\]
converges in the weak operator topology to some $b_{\ell}\in L(H)$.
In the latter case $\left(\left(\partial_{0}+a_{n}\right)^{-1}\right)_{n}$
converges to 
\[
\left(\partial_{0}+\partial_{0}\sum_{j=1}^{\infty}\left(-\sum_{\ell=1}^{\infty}\left(-\partial_{0}^{-1}\right)^{\ell}b_{\ell}\right)^{j}\right)^{-1}
\]
in the weak operator topology.\end{prop}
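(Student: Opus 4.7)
The plan is to identify $(\partial_0+a_n)^{-1}$ with $M_n(\partial_0^{-1})$ for $M_n(z)\coloneqq z(1+za_n)^{-1}$, and to convert weak operator topology convergence on $L_\nu^2(\mathbb{R},H)$ into information about the Taylor coefficients of $M_n$ at $0$, which are (up to signs) precisely the powers $a_n^\ell$. Since $\|\partial_{0,\nu}^{-1}\|\le 1/\nu$ and $\nu>2\sup_n\|a_n\|+1$, the Neumann series
\[(\partial_0+a_n)^{-1}=\sum_{\ell=0}^{\infty}(-1)^{\ell}\partial_0^{-\ell-1}a_n^\ell\]
converges in operator norm uniformly in $n$; correspondingly, $M_n(z)=\sum_{\ell\ge 0}(-1)^\ell z^{\ell+1}a_n^\ell$ is holomorphic on a disk $B(r,r)$ with $1/(2\nu)<r<1/(2\sup_n\|a_n\|)$, and $(M_n)_n$ is bounded in $\mathcal{H}^\infty(B(r,r),L(H))$.

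For the ``if'' direction, suppose $a_n^\ell\to b_\ell$ in the weak operator topology of $L(H)$ for each $\ell$. A standard dominated-convergence argument shows that the pointwise-in-time extensions also converge in the weak operator topology on $L_\nu^2(\mathbb{R},H)$; together with the uniform norm-domination of the Neumann series by a convergent geometric series, this justifies term-by-term passage to the limit, giving
\[(\partial_0+a_n)^{-1}\to\sum_{\ell=0}^\infty(-1)^\ell\partial_0^{-\ell-1}b_\ell\]
in the weak operator topology, with the convention $b_0\coloneqq 1$. Setting $S\coloneqq -\sum_{\ell=1}^\infty(-\partial_0^{-1})^\ell b_\ell$, we have $\|S\|<1$ and, by direct expansion, $(1-S)\partial_0^{-1}=\sum_{\ell\ge 0}(-1)^\ell\partial_0^{-\ell-1}b_\ell$, while $\partial_0+\partial_0\sum_{j=1}^\infty S^j=\partial_0(1-S)^{-1}$. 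Taking inverses, the limit equals the expression claimed in the proposition.

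For the converse, assume $((\partial_0+a_n)^{-1})_n$ converges in the weak operator topology. Boundedness of $(M_n)_n$ in $\mathcal{H}^\infty(B(r,r),L(H))$ together with the compactness of bounded sets in $\mathcal{H}^\infty_{\textnormal{w}}$ ensures that every subsequence admits a further $\tau_{\textnormal{w}}$-convergent subsequence; any two such limits $M,\tilde M$ must satisfy $M(\partial_0^{-1})=\tilde M(\partial_0^{-1})$, which via the Fourier-Laplace spectral representation forces $M((\ii\xi+\nu)^{-1})=\tilde M((\ii\xi+\nu)^{-1})$ for a.e.\ $\xi\in\mathbb{R}$. Since the circle $\{(\ii\xi+\nu)^{-1}:\xi\in\mathbb{R}\}$ accumulates in $B(r,r)$, the identity theorem (applied after pairing with arbitrary $\phi,\psi\in H$) yields $M=\tilde M$, so $M_n\to M$ in $\tau_{\textnormal{w}}$. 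This means that $z\mapsto\langle\phi,M_n(z)\psi\rangle$ converges uniformly on compact subsets of $B(r,r)$ for all $\phi,\psi\in H$, and Cauchy's integral formula then gives convergence of the Taylor coefficients at $0$, i.e.\ of $(-1)^\ell\langle\phi,a_n^\ell\psi\rangle$ for every $\ell$. This is exactly weak operator topology convergence of $(a_n^\ell)_n$, and the explicit formula for the limit follows from the ``if'' part.

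The main obstacle is the uniqueness-of-limit step in the converse direction: extracting simultaneous weak operator topology convergence of \emph{all} powers $a_n^\ell$ from the single piece of information provided by weak operator topology convergence of $(\partial_0+a_n)^{-1}$ requires carefully combining compactness in $\mathcal{H}^\infty_{\textnormal{w}}$ with injectivity of the functional calculus through an identity-theorem argument for $L(H)$-valued holomorphic functions on $B(r,r)$. Once this is in place, everything else reduces to routine Neumann-series manipulations together with the standard interplay between weak operator topology convergence of bounded sequences in $L(H)$ and their pointwise-in-time extensions.
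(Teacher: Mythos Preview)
Your proposal is correct and follows essentially the same approach as the paper's proof: both directions rest on the Neumann series representation $(\partial_{0}+a_{n})^{-1}=M_{n}(\partial_{0}^{-1})$ with $M_{n}(z)=z(1+za_{n})^{-1}$, the compactness of bounded sets in $\mathcal{H}_{\textnormal{w}}^{\infty}$, and the extraction of Taylor coefficients via Cauchy's integral formula after passing through the Fourier--Laplace transform. You have spelled out more carefully than the paper the uniqueness-of-limit step (identity theorem applied to $M(\partial_{0}^{-1})=\tilde{M}(\partial_{0}^{-1})$) and the algebraic verification of the limit formula, but the underlying strategy is identical.
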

\begin{proof}
The 'if'-part is a straightforward application of a Neumann series
expansion of $\left(\partial_{0}+a_{n}\right)^{-1}$, see e.g. \cite[Theorem 2.1]{Waurick2011a}.
The 'only-if'-part follows from the representation
\[
\left(\partial_{0}+a_{n}\right)^{-1}=\sum_{j=0}^{\infty}\left(-\partial_{0}^{-1}\right)^{j}a_{n}^{j}\partial_{0}^{-1}\eqqcolon M_{n}(\partial_{0}^{-1})\quad(n\in\mathbb{N}),
\]
the application of the Fourier-Laplace transform and Cauchy's integral
formulas for the derivatives of holomorphic functions. For the latter
argument note that $(M_{n})_{n}$ is a bounded sequence in $\mathcal{H}^{\infty}(B(r,r),L(H))$
for $r>\frac{1}{2\nu}$ and, thus, contains a $\mathcal{H}_{\textnormal{w }}^{\infty}$-convergent
subsequence, whose limit $M$ satisfies $M(\partial_{0}^{-1})=\mathbb{\tau}_{\textnormal{w}}-\lim_{n\to\infty}\left(\partial_{0}+a_{n}\right)^{-1}$. 
\end{proof}
One might wonder under which circumstances the conditions in the latter
theorem happen to be satisfied. We discuss the following example initially
studied by Tartar.
\begin{example}[Ordinary differential equations]
 Let $a\in L^{\infty}(\mathbb{R})$. If $a$ is $1$-periodic then
$a(n\cdot)$ converges to $\int_{0}^{1}a$ in the $\sigma(L^{\infty},L^{1})$-topology.
Regard $a$ as a multiplication operator $a(\mathrm{m}_1)$ on $L^{2}(\mathbb{R})$.
Now, we have the explicit formula
\[
\left(\partial_{0}+a(n\mathrm{m}_1)\right)^{-1}\stackrel{\tau_{\textnormal{w}}}{\to}\left(\partial_{0}+\partial_{0}\sum_{j=1}^{\infty}\left(-\sum_{\ell=1}^{\infty}\left(-\partial_{0}^{-1}\right)^{\ell}\int_{0}^{1}a^{\ell}\right)^{j}\right)^{-1}.
\]
We should remark here that the classical approach to this problem
uses the theory of Young-measures to express the limit equation. This
is not needed in our approach.
\end{example}
With the latter example in mind, we now turn to the discussion of
a general theorem also working for Maxwell's equation. As mentioned
above, these equations can be reduced to the cases of Theorem \ref{thm:Hom_1_auto_comp}
and \ref{thm:ode_degen}. Consequently, the limit equations become
more involved. For sake of this presentation, we do not state the
explicit formulae for the limit expressions and instead refer to \cite[Corollary 4.7]{Waurick2012}.
\begin{thm}[{\cite[Corollary 4.7]{Waurick2012}}]
\label{thm:hom_gen_inf_null}Let $\nu,\epsilon\in\mathbb{R}_{>0}$,
$r>\frac{1}{2\nu}$, $(M_{n})_{n}$ in $\mathbb{\mathcal{H}}^{\infty,c}(B(r,r),L(H))\cap\mathbb{\mathcal{H}}^{\infty}(B(0,\epsilon),L(H))$
bounded, $A\colon D(A)\subseteq H\to H$ skew-selfadjoint, $H$ separable.
Assume that $D_{A}\cap N(A)^{\bot}\hookrightarrow\hookrightarrow H$
and, in addition,
\begin{align*}
M_{n}(0) & \geq c\text{ on }R(M_{n}(0))=R(M_{1}(0))\\
\iota_{N(A)^\bot}^\ast M_{n}^{'}(0)\iota_{N(A)}\left(\iota_{N(A)}^\ast M_{n}^{'}(0)\iota_{N(A)}\right)^{-1} & =\iota_{N(A)^\bot}^\ast M_{n}^{'}(0)^{*}\iota_{N(A)}\left(\iota_{N(A)}^\ast M_{n}^{'}(0)^{*}\iota_{N(A)}\right)^{-1},
\end{align*}
for all $n\in\mathbb{N}$, where $\iota_{N(A)^\bot} \colon  N(M_{1}(0))\cap N(A)^{\bot} \to H$,
$\iota_{N(A)} \colon  N(M_{1}(0))\cap N(A) \to H $ denote the canonical embeddings. Then there exists a subsequence $(M_{n_{k}})_{k}$ of
$(M_{n})_{n}$ such that 
\[
\left(\overline{\partial_{0}M_{n_{k}}(\partial_{0}^{-1})+A}\right)^{-1}\to\left(\overline{\partial_{0}M(\partial_{0}^{-1})+A}\right)^{-1}
\]
converges in the weak operator topology.\end{thm}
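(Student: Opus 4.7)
The plan is to combine the machinery of Theorem \ref{thm:Hom_1_auto_comp} with that of Theorem \ref{thm:ode_degen} by decomposing the state space according to $H=N(A)\oplus N(A)^{\bot}$. On $N(A)^{\bot}$ the operator $A$ restricts to a skew-selfadjoint operator with the compact resolvent property (this is precisely the content of the compactness assumption $D_{A}\cap N(A)^{\bot}\hookrightarrow\hookrightarrow H$ together with Proposition \ref{prop:null_away_comp}), so on that subspace the compactness-based argument of Theorem \ref{thm:Hom_1_auto_comp} is available. On $N(A)$ the operator $A$ vanishes, so the projected equation is an ordinary differential equation in the infinite-dimensional Hilbert space $N(A)$, which is the setting of Theorem \ref{thm:ode_degen}. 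The essential additional work is to control the coupling between these two blocks in the limit.

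The first step is to use the compactness of $B_{\mathcal{H}^{\infty}}$ in $\tau_{\textnormal{w}}$ to extract an initial subsequence along which $(M_{n})_{n}$ converges in $\mathcal{H}_{\textnormal{w}}^{\infty}$; closedness of $\mathcal{H}^{\infty,c}$ under $\tau_{\textnormal{w}}$ ensures the limit again satisfies the positivity estimate. Along a further subsequence, and using the analyticity of each $M_n$ at $0$, I would apply the argument of Theorem \ref{thm:ode_degen} to the sequence of restricted material laws on $N(A)$, that is, to $\iota_{N(A)}^{\ast}M_{n}(\partial_0^{-1})\iota_{N(A)}$, which is legitimate precisely because of the hypothesis $M_n(0)\geq c$ on the common range $R(M_1(0))$. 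This produces, via a Neumann series/Schur complement computation, an effective limit material law $M^0\in\mathcal{H}^\infty$ on $N(A)$ governing the nullspace component of the solutions.

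Next, on $N(A)^{\bot}$, the compactness $D_{A}\cap N(A)^{\bot}\hookrightarrow\hookrightarrow H$ together with the Paley--Wiener characterisation of causal operators (as in the proof of Theorem \ref{thm:Hom_1_auto_comp}) allows us to pass to the limit for the restricted material laws $\iota_{N(A)^{\bot}}^{\ast}M_{n}(\partial_0^{-1})\iota_{N(A)^{\bot}}$ along a further subsequence. The coupling equation between the $N(A)$- and $N(A)^{\bot}$-components is driven by the off-diagonal blocks of $\partial_{0}M_{n}(\partial_{0}^{-1})$, and in view of the Taylor expansion at $0$ the leading coupling term is $M_{n}'(0)$. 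Here the symmetry/commutation hypothesis of the theorem enters decisively: it ensures that the Schur complement obtained when eliminating the $N(A)$-component from the coupled block system is selfadjoint in the relevant sense, so the positivity structure from Theorem \ref{thm:ode_degen} is preserved in the limit on $N(A)$, and the residual coupling on $N(A)^{\bot}$ still fits into the $\mathcal{H}^{\infty,c}$ framework required by Theorem \ref{thm:Hom_1_auto_comp}.

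The main obstacle, as anticipated, is assembling these two limit regimes into a \emph{single} evolutionary equation of the form $\overline{\partial_{0}M(\partial_{0}^{-1})+A}$ for some $M\in\mathcal{H}^{\infty}$. Without the commutation condition, the Schur complement would involve non-symmetric products of weak-operator limits, and (as the remark after Theorem \ref{thm:ode_degen} emphasises) these limits do not commute with inversion, leading to expressions that cannot be identified with an $\mathcal{H}^\infty$-valued function in a straightforward way. The stated commutativity of $\iota_{N(A)^{\bot}}^{\ast}M_{n}'(0)\iota_{N(A)}\bigl(\iota_{N(A)}^{\ast}M_{n}'(0)\iota_{N(A)}\bigr)^{-1}$ with its adjoint version is precisely what allows the Schur complement to be recognised as the $\partial_0^{-1}$-coefficient of an analytic operator-valued function, and hence what produces a limit material law in $\mathcal{H}^\infty$. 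A subsequence argument then upgrades the extracted subsequential convergence of solution operators in the weak operator topology to the form stated in the theorem.
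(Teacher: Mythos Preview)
Your proposal follows precisely the strategy the paper indicates: the paragraph preceding the theorem explicitly says the problem ``can be reduced to the cases of Theorem \ref{thm:Hom_1_auto_comp} and \ref{thm:ode_degen}'', and your decomposition $H=N(A)\oplus N(A)^{\bot}$ with the compactness argument on $N(A)^{\bot}$ and the ODE argument on $N(A)$ is exactly this reduction. The paper does not actually give a proof here but refers to the cited external source for both the argument and the explicit form of the limit $M$; your sketch is consistent with that outline and correctly identifies the role of the commutation hypothesis in making the Schur complement well-behaved.
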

\begin{rem}
It should be noted that, similarly to the case of ordinary differential
equations, in general, we do \emph{not }have $M_{n_{k}}(\partial_{0}^{-1})\to M(\partial_{0}^{-1})$
in the weak operator topology. 
\end{rem}
Before we discuss possible generalizations of the above results to
the non-autonomous case, we illustrate the applicability of Theorem
\ref{thm:hom_gen_inf_null} to Maxwell's equations:
\begin{example}[Example \ref{ex:max_hom} continued]
 Consider 
\[
\left(\partial_{0}\left(\begin{array}{cc}
\varepsilon_{n} & 0\\
0 & \mu_{n}
\end{array}\right)+\left(\begin{array}{cc}
\sigma_{n} & 0\\
0 & 0
\end{array}\right)+\left(\begin{array}{cc}
0 & -\curl\\
\curl_{c} & 0
\end{array}\right)\right)\left(\begin{array}{c}
E_{n}\\
H_{n}
\end{array}\right)=\left(\begin{array}{c}
J\\
0
\end{array}\right)
\]
for bounded sequences of bounded linear operators $(\varepsilon_{n})_{n},(\mu_{n})_{n},(\sigma_{n})_{n}$.
Assuming suitable geometric requirements on the underlying domain
$\Omega$, see e.g. \cite{Witsch1993}, we realize that the compactness
condition is satisfied. Thus, we only need to guarantee the compatibility
conditions: Essentially, there are two complementary cases. On the
one hand, one assumes uniform strict positive definiteness of the
(selfadjoint) operators $\left(\begin{array}{cc}
\varepsilon_{n} & 0\\
0 & \mu_{n}
\end{array}\right)$. On the other hand, we may also consider the eddy current problem,
which results in $\varepsilon_{n}=0$. Then, in order to apply Theorem
\ref{thm:hom_gen_inf_null}, we have to assume selfadjointness of
$\sigma_{n}$ and the existence of some $c>0$ such that $\sigma_{n}\geq c$
for all $n\in\mathbb{N}.$ In this respect our homogenization theorem
only works under additional assumptions on the material laws apart
from (uniform) well-posedness conditions. We also remark that the
limit equation is of integro-differential type, see \cite[Corollary 4.7]{Waurick2012}
or \cite{Well}.
\end{example}

\subsection{The non-autonomous case\label{sub:The-non-autonomous-case}}

The non-autonomous case is characterized by the fact that the operators
$\mathcal{M}$ and $\mathcal{A}$ in (\ref{eq:gen_evo}) does not have to commute with
the translation operators $\tau_{h}$. A rather general abstract result
concerning well-posedness reads as follows:
\begin{thm}[{\cite[Theorem 2.4]{Waurick2013_nonauto}}]
\label{thm:non-auto2}Let $\nu>0$ and $\mathcal{M},\mathcal{N}\in L(L_{\nu}^{2}(\mathbb{R},H))$.
Assume that there exists $M\in L(L_{\nu}^{2}(\mathbb{R},H))$ such
that 
\[
\mathcal{M}\partial_{0,\nu}\subseteq\partial_{0,\nu}\mathcal{M}+M.
\]
Let $\mathcal{A}\colon D(\mathcal{A})\subseteq L_{\nu}^{2}(\mathbb{R},H)\to L_{\nu}^{2}(\mathbb{R},H)$
be densely defined, closed, linear and such that $\partial_{0,\nu}^{-1}\mathcal{A}\subseteq\mathcal{A}\partial_{0,\nu}^{-1}$.
Assume there exists $c>0$ such that the positivity conditions 
\[
\Re\langle\left(\partial_{0,\nu}\mathcal{M}+\mathcal{N}+\mathcal{A}\right)\phi|\1_{_{\mathbb{R}_{\leq a}}}(\mathrm{m}_{0})\phi\rangle\geq c\langle\phi|\1_{_{\mathbb{R}_{\leq a}}}(\mathrm{m}_{0})\phi\rangle
\]
and 
\[
\Re\langle\left(\left(\partial_{0,\nu}\mathcal{M}+\mathcal{N}\right)^{*}+\mathcal{A}^{*}\right)\psi|\psi\rangle\geq c\langle\psi|\psi\rangle
\]
hold for all $a\in\mathbb{R}$, $\phi\in D(\partial_{0,\nu})\cap D(\mathcal{A})$,
$\psi\in D(\partial_{0,\nu})\cap D(\mathcal{A}^{*})$. Then $\mathcal{B}\coloneqq\overline{\partial_{0,\nu}\mathcal{M}+\mathcal{N}+\mathcal{A}}$
is continuously invertible, $\left\Vert \mathcal{B}^{-1}\right\Vert \leq\frac{1}{c}$,
and the operator $\mathcal{B}^{-1}$ is causal in $L_{\nu}^{2}(\mathbb{R},H)$.
\end{thm}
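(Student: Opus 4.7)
The strategy is to reduce the well-posedness assertion to Corollary \ref{lin:op:theo} and to read off causality from the first positivity condition by testing against the time cutoff $\chi_{\mathbb{R}_{\leq a}}(\mathrm{m}_{0})$. Write $L:=\partial_{0,\nu}\mathcal{M}+\mathcal{N}+\mathcal{A}$ on the domain $D:=D(\partial_{0,\nu})\cap D(\mathcal{A})$, so that $\mathcal{B}=\overline{L}$.

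First I would verify that $D$ is dense in $L_{\nu}^{2}(\mathbb{R},H)$. The intertwining $\partial_{0,\nu}^{-1}\mathcal{A}\subseteq\mathcal{A}\partial_{0,\nu}^{-1}$ gives $\partial_{0,\nu}^{-1}D(\mathcal{A})\subseteq D$; since $\partial_{0,\nu}^{-1}$ is bounded with dense range $D(\partial_{0,\nu})$ and $D(\mathcal{A})$ is dense, the set $\partial_{0,\nu}^{-1}D(\mathcal{A})$ is dense, hence so is $D$. Letting $a\to\infty$ in the first positivity condition (using $\chi_{\mathbb{R}_{\leq a}}(\mathrm{m}_{0})\to I$ strongly) yields
\[
\Re\langle L\phi\,|\,\phi\rangle\geq c\|\phi\|^{2}\qquad(\phi\in D),
\]
so that $L$ is closable, $\mathcal{B}$ is strictly accretive, injective, has closed range and satisfies $\|\mathcal{B}^{-1}\|\leq 1/c$ on that range.

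To finish the well-posedness part via Corollary \ref{lin:op:theo}, I still need strict accretivity, or at least trivial kernel, of $\mathcal{B}^{*}$. Since $\mathcal{M},\mathcal{N}$ are bounded and $\partial_{0,\nu}$ is normal with $D(\partial_{0,\nu})=D(\partial_{0,\nu}^{*})$, we have
\[
\mathcal{B}^{*}=L^{*}\supseteq(\partial_{0,\nu}\mathcal{M}+\mathcal{N})^{*}+\mathcal{A}^{*}\quad\text{on}\quad D(\partial_{0,\nu})\cap D(\mathcal{A}^{*}),
\]
and the second positivity condition yields $\Re\langle\mathcal{B}^{*}\psi\,|\,\psi\rangle\geq c\|\psi\|^{2}$ on this subspace. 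Taking adjoints of the intertwining produces the dual relation $(\partial_{0,\nu}^{-1})^{*}\mathcal{A}^{*}\subseteq\mathcal{A}^{*}(\partial_{0,\nu}^{-1})^{*}$, which implies that $D(\partial_{0,\nu})\cap D(\mathcal{A}^{*})$ is dense; combined with the commutator bound $\mathcal{M}\partial_{0,\nu}\subseteq\partial_{0,\nu}\mathcal{M}+M$ it also allows one to regularize a given $\psi\in D(\mathcal{B}^{*})$ via, e.g., $\psi_{\epsilon}\coloneqq(1+\epsilon\partial_{0,\nu}^{*})^{-1}\psi$, producing an approximating sequence $\psi_{\epsilon}\in D(\partial_{0,\nu})\cap D(\mathcal{A}^{*})$ with $\psi_{\epsilon}\to\psi$ and $\mathcal{B}^{*}\psi_{\epsilon}\to\mathcal{B}^{*}\psi$. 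This identifies $D(\partial_{0,\nu})\cap D(\mathcal{A}^{*})$ as a core for $\mathcal{B}^{*}$, the accretivity estimate extends, $\mathcal{B}^{*}$ is injective, and Corollary \ref{lin:op:theo} supplies the bounded inverse with $\|\mathcal{B}^{-1}\|\leq 1/c$.

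For causality, fix $a\in\mathbb{R}$ and $f\in L_{\nu}^{2}(\mathbb{R},H)$ with $\chi_{\mathbb{R}_{\leq a}}(\mathrm{m}_{0})f=0$, and set $u\coloneqq\mathcal{B}^{-1}f$. Choose $\phi_{n}\in D$ with $\phi_{n}\to u$ and $L\phi_{n}\to f$; by boundedness and self-adjointness of $\chi_{\mathbb{R}_{\leq a}}(\mathrm{m}_{0})$ one can pass to the limit in the first positivity condition to obtain
\[
c\|\chi_{\mathbb{R}_{\leq a}}(\mathrm{m}_{0})u\|^{2}\leq\Re\langle f\,|\,\chi_{\mathbb{R}_{\leq a}}(\mathrm{m}_{0})u\rangle=\Re\langle\chi_{\mathbb{R}_{\leq a}}(\mathrm{m}_{0})f\,|\,u\rangle=0,
\]
so $\chi_{\mathbb{R}_{\leq a}}(\mathrm{m}_{0})u=0$, which is precisely causality. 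The hard step is the core argument for $\mathcal{B}^{*}$: because $\mathcal{A}$ and $\partial_{0,\nu}\mathcal{M}+\mathcal{N}$ do not commute, $\mathcal{B}^{*}$ does not literally equal $(\partial_{0,\nu}\mathcal{M}+\mathcal{N})^{*}+\mathcal{A}^{*}$, and propagating the second positivity condition to all of $D(\mathcal{B}^{*})$ is exactly where both structural hypotheses on $\mathcal{M}$ and $\mathcal{A}$ must be combined to control the error terms produced by the regularizing resolvents.
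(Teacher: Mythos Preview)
The paper does not actually prove Theorem~\ref{thm:non-auto2}; it merely quotes it from \cite{Waurick2013_nonauto}. The only hint the paper gives about the method is the remark following Theorem~\ref{thm:Solutiontheory}, where it is said that strict accretivity of the operator and of its adjoint reduces everything to Corollary~\ref{lin:op:theo}. Your proposal follows exactly this route, and the three building blocks --- density of $D(\partial_{0,\nu})\cap D(\mathcal{A})$ via $\partial_{0,\nu}^{-1}D(\mathcal{A})$, strict accretivity of $\mathcal{B}$ by letting $a\to\infty$ in the first positivity condition, and causality of $\mathcal{B}^{-1}$ by passing to the limit in the truncated estimate --- are all correct and cleanly argued.

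The one place where your sketch is genuinely incomplete is the core argument for $\mathcal{B}^{*}$, and you are right to flag it. Two points need more care than you indicate. First, from $\partial_{0,\nu}^{-1}\mathcal{A}\subseteq\mathcal{A}\partial_{0,\nu}^{-1}$ it does \emph{not} follow by a simple Neumann series that $(1+\epsilon\partial_{0,\nu})^{-1}$ commutes with $\mathcal{A}$ for small $\epsilon$, because the relevant series only converges for $\epsilon>1/\nu$. One has to argue instead that the set of $z\in\rho(\partial_{0,\nu}^{-1})$ for which $(z-\partial_{0,\nu}^{-1})^{-1}$ preserves $D(\mathcal{A})$ is open and closed in $\rho(\partial_{0,\nu}^{-1})$, hence contains the whole unbounded component, which in turn contains $-\epsilon$ for every $\epsilon>0$. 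Second, to land in $D(\mathcal{A}^{*})$ you need the intermediate observation that $D(\mathcal{B}^{*})\cap D(\partial_{0,\nu})\subseteq D(\mathcal{A}^{*})$; this follows once you know that $D(\partial_{0,\nu})\cap D(\mathcal{A})$ is a core for $\mathcal{A}$ (which the resolvent regularisation above gives), since then for $\psi\in D(\mathcal{B}^{*})\cap D(\partial_{0,\nu})$ the functional $\phi\mapsto\langle\mathcal{A}\phi,\psi\rangle=\langle\phi,\mathcal{B}^{*}\psi-(\partial_{0,\nu}\mathcal{M}+\mathcal{N})^{*}\psi\rangle$ is bounded on a core. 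With these two ingredients your commutator computation using $\mathcal{M}\partial_{0,\nu}\subseteq\partial_{0,\nu}\mathcal{M}+M$ does yield $\mathcal{B}^{*}\psi_{\epsilon}\to\mathcal{B}^{*}\psi$, and the proof closes.
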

In order to capture the main idea of this general abstract result,
we consider the following special non-autonomous problem of the form
\begin{equation}
\left(\partial_{0,\nu}M_{0}(\mathrm{m}_{0})+M_{1}(\mathrm{m}_{0})+A\right)u=f,\label{eq:non_auto_skew}
\end{equation}
where $\partial_{0,\nu}$ denotes the time-derivative as introduced in
Subsection \ref{sub:The-Time-Derivative}, and $A$ denotes a skew-selfadjoint
operator on some Hilbert space $H$ (and its canonical extension to
the space $L_{\nu}^{2}(\mathbb{R},H)$). Moreover, $M_{0},M_{1}:\mathbb{R}\to L(H)$
are assumed to be strongly measurable and bounded (in symbols $M_{0},M_{1}\in L_{s}^{\infty}(\mathbb{R},L(H))$)
and therefore, they give rise to multiplication operators on $L_{\nu}^{2}(\mathbb{R},H)$
by setting 
\[
\left(M_{i}(\mathrm{m}_{0})u\right)(t)\coloneqq M_{i}(t)u(t)\quad(\mbox{a.e. }t\in\mathbb{R})
\]
for $u\in L_{\nu}^{2}(\mathbb{R},H)$, where $\nu\geq0$ and $i\in\{0,1\}.$
Of course, the so defined multiplication operators are bounded with
\[
\|M_{i}(\mathrm{m}_{0})\|_{L(L_{\nu}^{2}(\mathbb{R},H))}\leq|M_{i}|_{\infty}=\esssup_{t\in\mathbb{R}}\|M_{i}(t)\|_{L(H)}
\]
for $i\in\{0,1\}$ and $\nu\geq0$. In order to formulate the theorem
in a less cluttered way, we introduce the following hypotheses.

\begin{hyp} \label{hyp:nonauto_linear}We say that $T\in L_{s}^{\infty}(\mathbb{R},L(H))$
satisfies the property

\begin{enumerate}[(a)]

\item \label{selfadjoint} if $T(t)$ is selfadjoint $(t\in\mathbb{R})$,

\item \label{non-negative} if $T(t)$ is non-negative $(t\in\mathbb{R})$,

\item \label{Lipschitz} if the mapping $T$ is Lipschitz-continuous,
where we denote the smallest Lipschitz-constant of $T$ by $|T|_{\mathrm{Lip}}$,
and

\item \label{differentiable} if there exists a set $N\subseteq\mathbb{R}$
of measure zero such that for each $x\in H$ the function 
\[
\mathbb{R}\setminus N\ni t\mapsto T(t)x
\]
is differentiable%
\footnote{If $H$ is separable, then the strong differentiability of $T$ on
$\mathbb{R}\setminus N$ for some set $N$ of measure zero already
follows from the Lipschitz-continuity of $T$ by Rademachers theorem.%
}. 

\end{enumerate}

\end{hyp}

If $T\in L_{s}^{\infty}(\mathbb{R},H)$ satisfies the hypotheses above,
then for each $t\in\mathbb{R}\setminus N$ the operator 
\begin{align*}
\dot{T}(t):H & \to H\\
x & \mapsto\left(T(\cdot)x\right)'(t)
\end{align*}
becomes a selfadjoint linear operator satisfying $\|\dot{T}(t)\|_{L(H)}\leq|T|_{\mathrm{Lip}}$
for every $t\in\mathbb{R}\setminus N$ and consequently $\dot{T}\in L_{s}^{\infty}(\mathbb{R},L(H))$.
We are now able to state the well-posedness result for non-autonomous
problems of the form (\ref{eq:non_auto_skew}).
\begin{thm}[{\cite[Theorem 2.13]{Picard2013_nonauto}}]
\label{thm:Solutiontheory} Let $A\colon D(A)\subseteq H\to H$ be
skew-selfadjoint and $M_{0},M_{1}\in L_{s}^{\infty}(\mathbb{R},L(H)).$
Furthermore, assume that $M_{0}$ satisfies the hypotheses (\ref{selfadjoint})-(\ref{differentiable})
and that there exists a set $N_{1}\subseteq\mathbb{R}$ of measure
zero with $N\subseteq N_{1}$ such that
\begin{equation}
\bigvee_{c_{0}>0,\nu_{0}>0}\;\bigwedge_{t\in\mathbb{R}\setminus N_{1},\nu\geq\nu_{0}}:\nu M_{0}(t)+\frac{1}{2}\dot{M}_{0}(t)+\Re M_{1}(t)\geq c_{0}.\label{eq:pos_def}
\end{equation}
Then the operator $\overline{\partial_{0,\nu}M_{0}\left(\mathrm{m}_{0}\right)+M_{1}\left(\mathrm{m}_{0}\right)+A}$
is continuously invertible in $L_{\nu}^{2}(\mathbb{R},H)$ for each
$\nu\geq\nu_{0}$. A norm bound for the inverse is $1/c_{0}$. Moreover,
we get that 
\begin{equation}
\left(\partial_{0,\nu}M_{0}\left(\mathrm{m}_{0}\right)+M_{1}\left(\mathrm{m}_{0}\right)+A\right)^{*}=\left(\overline{M_{0}\left(\mathrm{m}_{0}\right)\partial_{0,\nu}^{*}+M_{1}\left(\mathrm{m}_{0}\right)^{*}-A}\right).\label{eq:adjoint}
\end{equation}
\end{thm}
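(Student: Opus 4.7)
The strategy is to reduce the claim to the linear well-posedness result Corollary~\ref{lin:op:theo} applied to the closure of $B\coloneqq\partial_{0,\nu}M_0(\mathrm{m}_0)+M_1(\mathrm{m}_0)+A$ on the natural domain $D(\partial_{0,\nu})\cap D(A)$. The backbone of the argument is a Leibniz-type product rule combined with the normal-operator identity $\partial_{0,\nu}^{*}=2\nu-\partial_{0,\nu}$ (equivalently $\Re\partial_{0,\nu}=\nu$).

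First, I would establish the product rule
$$\partial_{0,\nu}M_0(\mathrm{m}_0)u=M_0(\mathrm{m}_0)\partial_{0,\nu}u+\dot M_0(\mathrm{m}_0)u\quad(u\in D(\partial_{0,\nu})).$$
On the core of smooth compactly supported $H$-valued time-functions this is the classical pointwise Leibniz rule in $t$, justified by hypothesis (\ref{differentiable}); the extension to all of $D(\partial_{0,\nu})$ uses the uniform bound $\|\dot M_0(t)\|\le|M_0|_{\mathrm{Lip}}$ coming from (\ref{Lipschitz}) together with the closedness of $\partial_{0,\nu}$. Next I would compute $\Re\langle u|\partial_{0,\nu}M_0(\mathrm{m}_0)u\rangle_\nu$ in two complementary ways: once by the product rule (using selfadjointness of $M_0(t)$ from (\ref{selfadjoint})), and once by moving $\partial_{0,\nu}$ to the first slot via $\partial_{0,\nu}^{*}=2\nu-\partial_{0,\nu}$. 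Averaging these identities cancels the cross term $\Re\langle M_0(\mathrm{m}_0)u|\partial_{0,\nu}u\rangle_\nu$ and yields
$$\Re\langle u|\partial_{0,\nu}M_0(\mathrm{m}_0)u\rangle_\nu=\bigl\langle u\bigm|\bigl(\nu M_0(\mathrm{m}_0)+\tfrac{1}{2}\dot M_0(\mathrm{m}_0)\bigr)u\bigr\rangle_\nu.$$
Adding $\Re\langle u|M_1(\mathrm{m}_0)u\rangle_\nu=\langle u|\Re M_1(\mathrm{m}_0)u\rangle_\nu$ and using $\Re\langle u|Au\rangle_\nu=0$, which follows from skew-selfadjointness of $A$, the pointwise positivity hypothesis (\ref{eq:pos_def}) gives strict accretivity $\Re\langle u|Bu\rangle_\nu\ge c_0\|u\|_\nu^{2}$ on $D(B)$.

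Third, I would perform the analogous two-way computation for the formal adjoint
$$B^{\sharp}\coloneqq M_0(\mathrm{m}_0)\partial_{0,\nu}^{*}+M_1(\mathrm{m}_0)^{*}-A=2\nu M_0(\mathrm{m}_0)-M_0(\mathrm{m}_0)\partial_{0,\nu}+M_1(\mathrm{m}_0)^{*}-A$$
on the same core $D(\partial_{0,\nu})\cap D(A)=D(\partial_{0,\nu})\cap D(A^{*})$. The sign flip on the $\dot M_0$ contribution produced by the product rule is compensated by the $2\nu M_0(\mathrm{m}_0)$ summand coming from $\partial_{0,\nu}^{*}$, and one again arrives at
$$\Re\langle\psi|B^{\sharp}\psi\rangle_\nu=\bigl\langle\psi\bigm|\bigl(\nu M_0(\mathrm{m}_0)+\tfrac{1}{2}\dot M_0(\mathrm{m}_0)+\Re M_1(\mathrm{m}_0)\bigr)\psi\bigr\rangle_\nu\ge c_0\|\psi\|_\nu^{2}.$$
Both $B$ and $B^{\sharp}$ are thus closable, strictly accretive, and densely defined (the natural core is dense since $A$ is skew-selfadjoint and $\partial_{0,\nu}$ is densely defined). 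The inclusion $\overline{B^{\sharp}}\subseteq(\overline{B})^{*}$ is automatic. Now the existence half of Corollary~\ref{lin:op:theo} applied to $\overline{B^{\sharp}}$ shows that $\overline{B^{\sharp}}$ is surjective; since any closed, strictly accretive restriction of a closed operator $(\overline{B})^{*}$ that attains the full codomain must equal that operator, this yields the adjoint identity (\ref{eq:adjoint}). With (\ref{eq:adjoint}) in hand, Corollary~\ref{lin:op:theo} applied to $\overline{B}$ delivers the continuous inverse with norm bound $1/c_0$.

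The main obstacle is the closure bookkeeping: pushing the formal pointwise-in-time Leibniz computation to the full Hilbert space domain via (\ref{Lipschitz})--(\ref{differentiable}), and then bootstrapping the strict accretivity of the formal adjoint $B^{\sharp}$ to the identification $(\overline B)^{*}=\overline{B^{\sharp}}$ without which the application of Corollary~\ref{lin:op:theo} would be incomplete. Once both accretivity inequalities are in place on a common core, everything else is a routine closure argument.
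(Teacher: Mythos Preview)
The paper does not argue via Corollary~\ref{lin:op:theo} directly. Its proof consists of one line: observe the commutator relation $M_0(\mathrm{m}_0)\partial_{0,\nu}\subseteq\partial_{0,\nu}M_0(\mathrm{m}_0)+M$ with $M$ bounded (this is your Leibniz rule, and is precisely where hypotheses (\ref{Lipschitz}) and (\ref{differentiable}) enter), and then invoke the more general non-autonomous black box Theorem~\ref{thm:non-auto2}, whose truncated positivity hypotheses follow from (\ref{eq:pos_def}) by the very computation you give. Your route---strict accretivity of $B$ and of the formal adjoint $B^\sharp$ on the common core $D(\partial_{0,\nu})\cap D(A)$, then Corollary~\ref{lin:op:theo}---is exactly the alternative the paper sketches in the paragraph \emph{after} the proof. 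So the strategies agree on the analytic core (the product-rule identity yielding $\Re\langle u|\partial_{0,\nu}M_0u\rangle_\nu=\langle u|(\nu M_0+\tfrac12\dot M_0)u\rangle_\nu$) and differ only in packaging.

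There is, however, a genuine circularity in your closure step. You write ``the existence half of Corollary~\ref{lin:op:theo} applied to $\overline{B^\sharp}$ shows that $\overline{B^\sharp}$ is surjective'', but Corollary~\ref{lin:op:theo} has no separable existence half: it requires strict accretivity of \emph{both} $\overline{B^\sharp}$ and $(\overline{B^\sharp})^*$. You have established only the former; for the latter you know merely $\overline B\subseteq(\overline{B^\sharp})^*$, and strict accretivity of a restriction does not propagate to an extension. Similarly, even if $\overline{B^\sharp}$ were surjective, the conclusion $\overline{B^\sharp}=(\overline B)^*$ still needs $N((\overline B)^*)=\{0\}$, i.e.\ dense range of $\overline B$---again part of what is to be proved. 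The paper is candid about this: it says the Corollary~\ref{lin:op:theo} route works ``after establishing the equality (\ref{eq:adjoint})'', signalling that (\ref{eq:adjoint}) is an independent step, not a consequence of the two accretivity estimates alone. In the cited source \cite{Picard2013_nonauto} this gap is closed by a regularization argument in the time variable showing that $D(\partial_{0,\nu})\cap D(A)$ is a core for $(\overline B)^*$. Your final paragraph correctly names this bootstrap as the main obstacle, but the argument you actually present in the third paragraph does not resolve it.
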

\begin{proof}
The result can easily be established, when observing that $M_{0}(\mathrm{m}_{0})\partial_{0,\nu}\subseteq\partial_{0,\nu}M_{0}(\mathrm{m}_{0})+\dot{M}_{0}(\mathrm{m}_{0})$
and using Theorem \ref{thm:non-auto2}.
\end{proof}
Independently of Theorem \ref{thm:non-auto2}, note that condition
(\ref{eq:pos_def}) is an appropriate non-autonomous analogue of the
positive definiteness constraint (\ref{eq:pos_material_law}) in the
autonomous case. With the help of (\ref{eq:pos_def}) one can prove
that the operator $\partial_{0,\nu}M_{0}(\mathrm{m}_{0})+M_{1}(\mathrm{m}_{0})+A$
is strictly monotone and after establishing the equality (\ref{eq:adjoint}),
the same argumentation works for the adjoint. Hence, the well-posedness
result may also be regarded as a consequence of Corollary \ref{lin:op:theo}.
\begin{example}
\label{ex:illustrating_non_autp}As an illustrating example for the
applicability of Theorem \ref{thm:Solutiontheory} we consider a non-autonomous
evolutionary problem, which changes its type in time and space. Let
$\nu>0$. Consider the $\left(1+1\right)$-dimensional wave equation:
\begin{eqnarray*}
\partial_{0,\nu}^{2}u-\partial_{1}^{2}u & = & f\mbox{ on }\mathbb{R}\times\mathbb{R}.
\end{eqnarray*}
As usual we rewrite this equation as a first order system of the form
\begin{equation}
\left(\partial_{0,\nu}\left(\begin{array}{cc}
1 & 0\\
0 & 1
\end{array}\right)+\left(\begin{array}{cc}
0 & -\partial_{1}\\
-\partial_{1} & 0
\end{array}\right)\right)\left(\begin{array}{c}
u\\
v
\end{array}\right)=\left(\begin{array}{c}
\partial_{0,\nu}^{-1}f\\
0
\end{array}\right).\label{eq:wave-eq}
\end{equation}
In this case we can compute the solution by Duhamel's formula in terms
of the unitary group generated by the skew-selfadjoint operator 
\[
\left(\begin{array}{cc}
0 & -\partial_{1}\\
-\partial_{1} & 0
\end{array}\right).
\]
Let us now, based on this, consider a slightly more complicated situation,
which is, however, still autonomous:
\begin{align}
 & \left(\partial_{0,\nu}\left(\begin{array}{cc}
\X_{_{\mathbb{R}\setminus\,]-\varepsilon,0[}}(\mathrm{m}_{1}) & 0\\
0 & \X_{_{\mathbb{R}\setminus\,]-\varepsilon,\varepsilon[}}(\mathrm{m}_{1})
\end{array}\right)+\left(\begin{array}{cc}
\X_{_{]-\varepsilon,0[}}(\mathrm{m}_{1}) & 0\\
0 & \X_{_{]-\varepsilon,\varepsilon[}}(\mathrm{m}_{1})
\end{array}\right)+\left(\begin{array}{cc}
0 & -\partial_{1}\\
-\partial_{1} & 0
\end{array}\right)\right)\left(\begin{array}{c}
u\\
v
\end{array}\right)\nonumber \\
 & =\left(\begin{array}{c}
\partial_{0,\nu}^{-1}f\\
0
\end{array}\right),\label{eq:auto_ex}
\end{align}

\end{example}
where $\X_{_{I}}(\mathrm{m}_{1})$ denotes the spatial multiplication
operator with the cut-off function $\X_{_{I}}$, given by $\left(\X_{_{I}}(\mathrm{m}_{1})f\right)(t,x)=\X_{_{I}}(x)f(t,x)$
for almost every $(t,x)\in\mathbb{R}\times\mathbb{R}$, every $f\in L_{\nu}^{2}(\mathbb{R},L^{2}(\mathbb{R}))$
and $I\subseteq\mathbb{R}$. Hence, (\ref{eq:auto_ex}) is an equation
of the form (\ref{eq:non_auto_skew}) with 
\[
M_{0}\left(\mathrm{m}_{0}\right)\coloneqq\left(\begin{array}{cc}
\X_{_{\mathbb{R}\setminus\,]-\varepsilon,0[}}(\mathrm{m}_{1}) & 0\\
0 & \X_{_{\mathbb{R}\setminus\,]-\varepsilon,\varepsilon[}}(\mathrm{m}_{1})
\end{array}\right)
\]

and

\[
M_{1}\left(\mathrm{m}_{0}\right)\coloneqq\left(\begin{array}{cc}
\X_{_{]-\varepsilon,0[}}(\mathrm{m}_{1}) & 0\\
0 & \X_{_{]-\varepsilon,\varepsilon[}}(\mathrm{m}_{1})
\end{array}\right)
\]
and both are obviously not time-dependent. Note that our solution
condition (\ref{eq:pos_def}) is satisfied and hence, problem (\ref{eq:auto_ex})
is well-posed in the sense of Theorem \ref{thm:Solutiontheory}.%
\footnote{Indeed, the well-posedness already follows from Theorem \ref{thm:sol_theo_skew},
since $M$ is autonomous and satisfies (\ref{eq:pos_material_law}).%
} By the dependence of the operators $M_{0}(\mathrm{m}_{0})$ and $M_{1}(\mathrm{m}_{0})$
on the spatial parameter, we see that (\ref{eq:auto_ex}) changes
its type from hyperbolic to elliptic to parabolic and back to hyperbolic
and so standard semi-group techniques are not at hand to solve the
equation. Indeed, in the subregion $]-\varepsilon,0[$ the problem
reads as 

\[
\left(\begin{array}{c}
u\\
v
\end{array}\right)+\left(\begin{array}{cc}
0 & -\partial_{1}\\
-\partial_{1} & 0
\end{array}\right)\left(\begin{array}{c}
u\\
v
\end{array}\right)=\left(\begin{array}{c}
\partial_{0,\nu}^{-1}f\\
0
\end{array}\right),
\]

which may be rewritten as an elliptic equation for $u$ of the form
\[
u-\partial_{1}^{2}u=\partial_{0,\nu}^{-1}f.
\]

For the region $]0,\varepsilon[$ we get 
\[
\left(\partial_{0,\nu}\left(\begin{array}{cc}
1 & 0\\
0 & 0
\end{array}\right)+\left(\begin{array}{cc}
0 & 0\\
0 & 1
\end{array}\right)+\left(\begin{array}{cc}
0 & -\partial_{1}\\
-\partial_{1} & 0
\end{array}\right)\right)\left(\begin{array}{c}
u\\
v
\end{array}\right)=\left(\begin{array}{c}
\partial_{0,\nu}^{-1}f\\
0
\end{array}\right),
\]

which yields a parabolic equation for $u$ of the form 
\[
\partial_{0,\nu}u-\partial_{1}^{2}u=\partial_{0,\nu}^{-1}f.
\]

In the remaining sub-domain $\mathbb{R}\setminus\,]-\varepsilon,\varepsilon[$
the problem is of the original form (\ref{eq:wave-eq}), which corresponds
to a hyperbolic problem for $u$. \\
To turn this into a genuinely time-dependent problem we now make a
modification to problem (\ref{eq:auto_ex}). We define the function
\[
\varphi(t)\coloneqq\begin{cases}
0 & \mbox{ if }t\leq0,\\
t & \mbox{ if }0<t\leq1,\\
1 & \mbox{ if }1<t
\end{cases}\quad(t\in\mathbb{R})
\]
and consider the material-law operator 
\[
M_{0}\left(\mathrm{m}_{0}\right)=\varphi(\mathrm{m}_{0})\left(\begin{array}{cc}
\X_{_{\mathbb{R}\setminus\,]-\varepsilon,0[}}(\mathrm{m}_{1}) & 0\\
0 & \X_{_{\mathbb{R}\setminus\,]-\varepsilon,\varepsilon[}}(\mathrm{m}_{1})
\end{array}\right),
\]

which now also degenerates in time. Moreover we modify $M_{1}(\mathrm{m}_{0})$
by adding a time-dependence of the form 
\[
M_{1}(\mathrm{m}_{0})=\left(\begin{array}{cc}
\X_{_{]-\infty,0[}}(\mathrm{m}_{0})+\X_{_{[0,\infty[}}(\mathrm{m}_{0})\X_{_{]-\varepsilon,0[}}(\mathrm{m}_{1}) & 0\\
0 & \X_{_{]-\infty,0[}}(\mathrm{m}_{0})+\X_{_{[0,\infty[}}(\mathrm{m}_{0})\X_{_{]-\varepsilon,\varepsilon[}}(\mathrm{m}_{1})
\end{array}\right).
\]
We show that this time-dependent material law still satisfies our
solvability condition. Note that 
\[
\varphi'(t)=\begin{cases}
1 & \mbox{ if }t\in]0,1[,\\
0 & \mbox{ otherwise}
\end{cases}
\]

and thus, for $t\leq0$ we have 
\[
\nu M_{0}(t)+\frac{1}{2}\dot{M}_{0}(t)+\Re M_{1}(t)=\left(\begin{array}{cc}
1 & 0\\
0 & 1
\end{array}\right)\geq1.
\]

For $0<t\leq1$ we estimate 
\begin{align*}
 & \nu M_{0}(t)+\frac{1}{2}\dot{M}_{0}(t)+\Re M_{1}(t)\\
 & =\left(\frac{1}{2}+\nu t\right)\left(\begin{array}{cc}
\X_{_{\mathbb{R}\setminus\,]-\varepsilon,0[}}(\mathrm{m}_{1}) & 0\\
0 & \X_{_{\mathbb{R}\setminus\,]-\varepsilon,\varepsilon[}}(\mathrm{m}_{1})
\end{array}\right)+\left(\begin{array}{cc}
\X_{_{]-\varepsilon,0[}}(\mathrm{m}_{1}) & 0\\
0 & \X_{_{]-\varepsilon,\varepsilon[}}(\mathrm{m}_{1})
\end{array}\right)\geq\frac{1}{2}
\end{align*}

and, finally, for $t>1$ we obtain that 
\begin{align*}
 & \nu M_{0}(t)+\frac{1}{2}\dot{M}_{0}(t)+\Re M_{1}(t)\\
 & =\nu\left(\begin{array}{cc}
\X_{_{\mathbb{R}\setminus\,]-\varepsilon,0[}}(\mathrm{m}_{1}) & 0\\
0 & \X_{_{\mathbb{R}\setminus\,]-\varepsilon,\varepsilon[}}(\mathrm{m}_{1})
\end{array}\right)+\left(\begin{array}{cc}
\X_{_{]-\varepsilon,0[}}(\mathrm{m}_{1}) & 0\\
0 & \X_{_{]-\varepsilon,\varepsilon[}}(\mathrm{m}_{1})
\end{array}\right)\geq\min\{\nu,1\}.
\end{align*}

There is also an adapted result on the closedness of the problem class
for the non-autonomous situation. The case $\mathcal{A}=0$ is thoroughly
discussed in \cite{Waurick2013a}. We give the corresponding result
for the situation where $\mathcal{A}$ is non-zero and satisfies a
certain compactness condition.
\begin{thm}[{\cite[Theorem 3.1]{Waurick2013_nonauto_homo}}]
\label{thm:general_cont_depend}Let $\nu>0$. Let $\left(\mathcal{M}_{n}\right)_{n}$
be a bounded sequence in $L(L_{\nu}^{2}(\mathbb{R},H))$ such that
$\left(\left[\mathcal{M}_{n},\partial_{0,\nu}\right]\right)_{n}$
is bounded in $L\!\left(L_{\nu}^{2}(\mathbb{R},H)\right)$. Moreover,
let $\mathcal{A}\colon D(\mathcal{A})\subseteq L_{\nu}^{2}(\mathbb{R},H)\to L_{\nu}^{2}(\mathbb{R},H)$
be linear and maximal monotone commuting with $\partial_{0,\nu}$
and assume that $\mathcal{M}_{n}$ is causal for each $n\in\mathbb{N}$.
Moreover, assume the positive definiteness conditions 
\begin{equation}
\Re\left\langle \partial_{0,\nu}\mathcal{M}_{n}u|\1_{_{\mathbb{R}_{\leq a}}}(\mathrm{m}_{0})u\right\rangle \geq c\left\langle u|\1_{_{\mathbb{R}_{\leq a}}}(\mathrm{m}_{0})u\right\rangle ,\quad\left\langle \mathcal{A}u|\1_{_{\mathbb{R}_{\leq0}}}(\mathrm{m}_{0})u\right\rangle \geq0\label{eq:truncated_pos_def}
\end{equation}
for all $u\in D(\partial_{0,\nu})\cap D(\mathcal{A})$, $a\in\mathbb{R}$,
$n\in\mathbb{N}$ and some $c>0$.

Assume that there exists a Hilbert space $K$ such that $K\hookrightarrow\hookrightarrow H$
and $D_{\mathcal{A}}\hookrightarrow L_{\nu}^{2}(\mathbb{R},K)$ and
that $\left(\mathcal{M}_{n}\right)_{n}$ converges in the weak operator
topology to some $\mathcal{M}$.

Then $\partial_{0,\nu}\mathcal{M}+\mathcal{A}$ is continuously invertible
in $L_{\nu}^{2}(\mathbb{R},H)$ and $\left(\overline{\partial_{0,\nu}\mathcal{M}_{n}+\mathcal{A}}\right)^{-1}\to\left(\overline{\partial_{0,\nu}\mathcal{M}+\mathcal{A}}\right)^{-1}$
in the weak operator topology of $L_{\nu}^{2}(\mathbb{R},H)$ as $n\to\infty$.
\end{thm}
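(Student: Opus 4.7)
The plan is to apply Theorem \ref{thm:non-auto2} to obtain uniform well-posedness for each $n$, extract a weak subsequential limit of the solutions, upgrade to strong convergence using the compactness hypothesis, and then pass to the limit in the equation.

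For well-posedness I would check the hypotheses of Theorem \ref{thm:non-auto2} for each $n$ (with $\mathcal{N}=0$): the commutator bound is given, the primal positivity is (\ref{eq:truncated_pos_def}), and the adjoint positivity follows from causality of $\mathcal{M}_n$ (yielding anti-causality of $\mathcal{M}_n^*$), maximal monotonicity of $\mathcal{A}$ (giving monotonicity of $\mathcal{A}^*$), and the fact that $\mathcal{A}$ commutes with $\partial_{0,\nu}$. This yields $\|(\overline{\partial_{0,\nu}\mathcal{M}_n+\mathcal{A}})^{-1}\|\leq 1/c$ uniformly in $n$. All of the relevant bounds on $\mathcal{M}_n$ (operator norm, commutator norm, positivity constant, causality) are preserved under weak-operator limits of uniformly bounded sequences, so the limit $\mathcal{M}$ satisfies the same hypotheses and $\overline{\partial_{0,\nu}\mathcal{M}+\mathcal{A}}$ is continuously invertible with the same norm bound.

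Now fix $f \in L_\nu^2(\mathbb{R},H)$ and set $u_n \coloneqq (\overline{\partial_{0,\nu}\mathcal{M}_n+\mathcal{A}})^{-1} f$. Banach--Alaoglu gives a weakly convergent subsequence (not relabeled) $u_n \rightharpoonup u$ in $L_\nu^2(\mathbb{R},H)$. Testing the equation $\partial_{0,\nu}\mathcal{M}_n u_n + \mathcal{A} u_n = f$ against $u_n$ and exploiting both positivity conditions gives uniform control of $\Re\langle\mathcal{A}u_n|u_n\rangle$; combined with the commutator identity $\partial_{0,\nu}\mathcal{M}_n = \mathcal{M}_n \partial_{0,\nu} + [\partial_{0,\nu},\mathcal{M}_n]$ and the uniform positivity, I would upgrade this to a uniform bound on $(\mathcal{A}u_n)_n$ in $L_\nu^2(\mathbb{R},H)$, putting $(u_n)_n$ uniformly in the graph space $D_\mathcal{A}$, hence by hypothesis in $L_\nu^2(\mathbb{R},K)$.

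The decisive step is the upgrade to strong convergence in $L_\nu^2(\mathbb{R},H)$. For this I would invoke an Aubin--Lions-type argument: the spatial compactness $K\hookrightarrow\hookrightarrow H$ supplies the compact part, while the temporal equicontinuity is extracted from the identity $\mathcal{M}_n \partial_{0,\nu} u_n = f - \mathcal{A}u_n - [\partial_{0,\nu},\mathcal{M}_n] u_n$ (whose right-hand side is bounded in $L_\nu^2(\mathbb{R},H)$) tested against suitable quantities, yielding $u_n \to u$ in $L^2_{\mathrm{loc}}(\mathbb{R},H)$; the exponential weight and the uniform tail estimate $\|u_n\|\leq\|f\|/c$ promote this to strong convergence in $L_\nu^2(\mathbb{R},H)$. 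With this in hand, the splitting $\mathcal{M}_n u_n - \mathcal{M}u = \mathcal{M}_n(u_n - u) + (\mathcal{M}_n - \mathcal{M})u$ gives $\mathcal{M}_n u_n \rightharpoonup \mathcal{M}u$, so passing to the limit in the equation — using closedness of $\overline{\partial_{0,\nu}\mathcal{M}+\mathcal{A}}$ — identifies $u$ as its solution for the data $f$; uniqueness removes the subsequence. I expect the hardest point to be the extraction of temporal regularity for the Aubin--Lions step: since $\mathcal{M}_n$ is not assumed uniformly invertible, one cannot read off $\partial_{0,\nu} u_n$ directly, and it is here that the commutator hypothesis together with the causality-adapted positivity (\ref{eq:truncated_pos_def}) must be combined carefully to yield the equicontinuity in time that the compactness argument demands.
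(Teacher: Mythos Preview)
The survey paper does not actually prove Theorem~\ref{thm:general_cont_depend}; it only states the result with a reference to \cite[Theorem 3.1]{Waurick2013_nonauto_homo}. So there is no proof in the paper to compare against directly, and your outline has to be judged on its own merits.

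Your overall strategy (uniform well-posedness via Theorem~\ref{thm:non-auto2}, stability of the hypotheses under weak-operator limits, compactness to upgrade convergence, pass to the limit) is the right architecture and matches what is done in \cite{Waurick2013_nonauto_homo}. But there is a genuine gap at the point where you claim a uniform bound on $(\mathcal{A}u_n)_n$. For a general $f\in L_\nu^2(\mathbb{R},H)$ the solution $u_n$ lies only in the domain of the \emph{closure} $\overline{\partial_{0,\nu}\mathcal{M}_n+\mathcal{A}}$, not necessarily in $D(\partial_{0,\nu})\cap D(\mathcal{A})$; so ``$\mathcal{A}u_n$'' need not even be defined, and the testing argument only yields $\|u_n\|\le \|f\|/c$ together with a bound on $\Re\langle \mathcal{A}u_n\,|\,u_n\rangle$ \emph{if} $u_n$ happens to lie in the right domain. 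A bound on $\Re\langle \mathcal{A}u_n\,|\,u_n\rangle$ does not imply a bound on $\|\mathcal{A}u_n\|$ for a merely monotone $\mathcal{A}$, and the commutator identity you invoke cannot be applied until you know $u_n\in D(\partial_{0,\nu})$.

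The missing step is a density-plus-regularity reduction. One first restricts to $f\in D(\partial_{0,\nu})$ (dense, and sufficient by the uniform bound $\|S_n\|\le 1/c$). For such $f$ one shows, using that $\mathcal{A}$ commutes with $\partial_{0,\nu}^{-1}$ and that $[\mathcal{M}_n,\partial_{0,\nu}]$ is uniformly bounded, that $u_n\in D(\partial_{0,\nu})\cap D(\mathcal{A})$ with $(\partial_{0,\nu}u_n)_n$ bounded in $L_\nu^2(\mathbb{R},H)$; then $\mathcal{A}u_n=f-\partial_{0,\nu}\mathcal{M}_n u_n=f-\mathcal{M}_n\partial_{0,\nu}u_n-[\partial_{0,\nu},\mathcal{M}_n]u_n$ is genuinely bounded, and $(u_n)_n$ sits uniformly in $D_{\mathcal{A}}\hookrightarrow L_\nu^2(\mathbb{R},K)$ as well as in $D(\partial_{0,\nu})$. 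With both spatial compactness $K\hookrightarrow\hookrightarrow H$ and a uniform bound on $\partial_{0,\nu}u_n$ in hand, the Aubin--Lions step goes through cleanly and the rest of your argument (splitting $\mathcal{M}_nu_n-\mathcal{M}u$, passing to the limit via weak closedness of the graph, removing the subsequence by uniqueness) is correct.
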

As in \cite{Waurick2013_nonauto_homo}, we illustrate the latter theorem
by the following example, being an adapted version of Example \ref{ex:illustrating_non_autp}.
\begin{example}[{\cite[Section 1]{Waurick2013_nonauto_homo}}]
 Recalling the definition of $\partial_{1},\,\partial_{1,c}$ on $L^2([0,1])$ from Definition \ref{def:div_grad}, we
treat the following system written in block operator matrix form:
\begin{multline}
\left(\partial_{0,\nu}\begin{pmatrix}\1_{_{[0,\frac{1}{4}]\cup[\frac{1}{2},\frac{3}{4}]}}(\mathrm{m}_{1}) & 0\\
0 & \1_{_{[0,\frac{1}{4}]\cup[\frac{3}{4},1]}}(\mathrm{m}_{1})
\end{pmatrix}+\begin{pmatrix}\1_{_{[\frac{1}{4},\frac{1}{2}]\cup[\frac{3}{4},1]}}(\mathrm{m}_{1}) & 0\\
0 & \1_{_{[\frac{1}{4},\frac{1}{2}]\cup[\frac{1}{2},\frac{3}{4}]}}(\mathrm{m}_{1})
\end{pmatrix}\right.\\
\left.+\begin{pmatrix}0 & \partial_{1}\\
\partial_{1,c} & 0
\end{pmatrix}\right)\begin{pmatrix}u\\
v
\end{pmatrix}=\begin{pmatrix}f\\
g
\end{pmatrix}\label{eq:mixed_type}
\end{multline}
 where $f,g$ are thought of being given. We find that $\mathcal{M}$
is given by 
\[
\mathcal{M}=\begin{pmatrix}\1_{_{[0,\frac{1}{4}]\cup[\frac{1}{2},\frac{3}{4}]}}(\mathrm{m}_{1}) & 0\\
0 & \1_{_{[0,\frac{1}{4}]\cup[\frac{3}{4},1]}}(\mathrm{m}_{1})
\end{pmatrix}+\partial_{0}^{-1}\begin{pmatrix}\1_{_{[\frac{1}{4},\frac{1}{2}]\cup[\frac{3}{4},1]}}(\mathrm{m}_{1}) & 0\\
0 & \1_{_{[\frac{1}{4},\frac{1}{2}]\cup[\frac{1}{2},\frac{3}{4}]}}(\mathrm{m}_{1})
\end{pmatrix}.
\]
 We realize that 
\[
\mathcal{A}=\begin{pmatrix}0 & \partial_{1}\\
\partial_{1,c} & 0
\end{pmatrix}
\]
is skew-selfadjoint and, thus, maximal monotone. Note that the system
describes a mixed type equation. The system varies between hyperbolic,
elliptic and parabolic type equations either with homogeneous Dirichlet
or Neumann data. Well-posedness of the system (\ref{eq:mixed_type})
can be established in $L_{\nu}^{2}(\mathbb{R},L^{2}(0,1))$.

Now, instead of (\ref{eq:mixed_type}), we consider the sequence of
problems 
\begin{multline}
\left(\partial_{0,\nu}\begin{pmatrix}\1_{_{[0,\frac{1}{4}]\cup[\frac{1}{2},\frac{3}{4}]}}(n\cdot\mathrm{m}_{1}\!\!\!\mod1) & 0\\
0 & \1_{_{[0,\frac{1}{4}]\cup[\frac{3}{4},1]}}(n\cdot\mathrm{m}_{1}\!\!\!\mod1)
\end{pmatrix}\right.\\
+\left.\begin{pmatrix}\1_{_{[\frac{1}{4},\frac{1}{2}]\cup[\frac{3}{4},1]}}(n\cdot\mathrm{m}_{1}\!\!\!\mod1) & 0\\
0 & \1_{_{[\frac{1}{4},\frac{1}{2}]\cup[\frac{1}{2},\frac{3}{4}]}}(n\cdot\mathrm{m}_{1}\!\!\!\mod1)
\end{pmatrix}\right.\\
\left.+\begin{pmatrix}0 & \partial_{1}\\
\partial_{1,c} & 0
\end{pmatrix}\right)\begin{pmatrix}u_{n}\\
v_{n}
\end{pmatrix}=\begin{pmatrix}f\\
g
\end{pmatrix}\label{eq:M_n}
\end{multline}
 for $n\in\mathbb{N}$, where $x\!\!\!\mod1\coloneqq x-\lfloor x\rfloor$,
$x\in\mathbb{R}$. With the same arguments from above well-posedness
of the latter equation is warranted in the space $L_{\nu}^{2}(\mathbb{R},L^{2}(\oi01))$.
Now, 
\begin{multline*}
\begin{pmatrix}\1_{_{[0,\frac{1}{4}]\cup[\frac{1}{2},\frac{3}{4}]}}(n\cdot\mathrm{m}_{1}\!\!\!\mod1) & 0\\
0 & \1_{_{[0,\frac{1}{4}]\cup[\frac{3}{4},1]}}(n\cdot\mathrm{m}_{1}\!\!\!\mod1)
\end{pmatrix}\\
+\partial_{0,\nu}^{-1}\left.\begin{pmatrix}\1_{_{[\frac{1}{4},\frac{1}{2}]\cup[\frac{3}{4},1]}}(n\cdot\mathrm{m}_{1}\!\!\!\mod1) & 0\\
0 & \1_{_{[\frac{1}{4},\frac{1}{2}]\cup[\frac{1}{2},\frac{3}{4}]}}(n\cdot\mathrm{m}_{1}\!\!\!\mod1)
\end{pmatrix}\right.\\
\to\begin{pmatrix}\frac{1}{2} & 0\\
0 & \frac{1}{2}
\end{pmatrix}+\partial_{0,\nu}^{-1}\begin{pmatrix}\frac{1}{2} & 0\\
0 & \frac{1}{2}
\end{pmatrix}
\end{multline*}
 in the weak operator topology due to periodicity. Theorem \ref{thm:general_cont_depend}
asserts that the sequence $\begin{pmatrix}u_{n}\\
v_{n}
\end{pmatrix}_{n}$ weakly converges to the solution $\begin{pmatrix}u\\
v
\end{pmatrix}$ of the problem 
\[
\left(\partial_{0,\nu}\begin{pmatrix}\frac{1}{2} & 0\\
0 & \frac{1}{2}
\end{pmatrix}+\begin{pmatrix}\frac{1}{2} & 0\\
0 & \frac{1}{2}
\end{pmatrix}+\begin{pmatrix}0 & \partial_{1}\\
\partial_{1,c} & 0
\end{pmatrix}\right)\begin{pmatrix}u\\
v
\end{pmatrix}=\begin{pmatrix}f\\
g
\end{pmatrix}.
\]
 It is interesting to note that the latter system does not coincide
with any of the equations discussed above.

Theorem \ref{thm:general_cont_depend} deals with coefficients $\mathbb{\mathcal{M}}$
that live in space-time. Going a step further instead of treating
(\ref{eq:M_n}), we let $(\kappa_{n})_{n}$ in $W_{1}^{1}(\mathbb{R})$
be a $W_{1}^{1}(\mathbb{R})$-convergent sequence of weakly differentiable
$L^{1}(\mathbb{R})$-functions with limit $\kappa$ and support on
the positive reals. Then it is easy to see that the associated convolution
operators $(\kappa_{n}*)_{n}$ converge in $L(L^{2}(\mathbb{R}_{\geq0}))$
to $\kappa*$. Moreover, using Young's inequality, we deduce that
\[
\|\kappa_{n}\ast\|_{L(L_{\nu}^{2}(\mathbb{R}))},\|\kappa_{n}'\ast\|_{L(L_{\nu}^{2}(\mathbb{R}))}\to0\quad(\nu\to\infty)
\]
 uniformly in $n$. Thus, the strict positive definiteness of 
\[
\partial_{0,\nu}(1+\kappa*)\left(\begin{pmatrix}\1_{_{[0,\frac{1}{4}]\cup[\frac{1}{2},\frac{3}{4}]}}(\mathrm{m}_{1}) & 0\\
0 & \1_{_{[0,\frac{1}{4}]\cup[\frac{3}{4},1]}}(\mathrm{m}_{1})
\end{pmatrix}+\partial_{0,\nu}^{-1}\begin{pmatrix}\1_{_{[\frac{1}{4},\frac{1}{2}]\cup[\frac{3}{4},1]}}(\mathrm{m}_{1}) & 0\\
0 & \1_{_{[\frac{1}{4},\frac{1}{2}]\cup[\frac{1}{2},\frac{3}{4}]}}(\mathrm{m}_{1})
\end{pmatrix}\right)
\]
in the truncated form as in (\ref{eq:truncated_pos_def}) in Theorem
\ref{thm:general_cont_depend} above follows from the respective inequality
for 
\[
\partial_{0,\nu}\begin{pmatrix}\1_{_{[0,\frac{1}{4}]\cup[\frac{1}{2},\frac{3}{4}]}}(\mathrm{m}_{1}) & 0\\
0 & \1_{_{[0,\frac{1}{4}]\cup[\frac{3}{4},1]}}(\mathrm{m}_{1})
\end{pmatrix}+\begin{pmatrix}\1_{_{[\frac{1}{4},\frac{1}{2}]\cup[\frac{3}{4},1]}}(\mathrm{m}_{1}) & 0\\
0 & \1_{_{[\frac{1}{4},\frac{1}{2}]\cup[\frac{1}{2},\frac{3}{4}]}}(\mathrm{m}_{1})
\end{pmatrix}.
\]
Now, the product of a sequence converging in the weak operator topology
and a sequence converging in the norm topology converges in the weak
operator topology. Hence, the solutions of 
\begin{multline*}
\left(\partial_{0,\nu}\left(1+\kappa_{n}*\right)\left(\begin{pmatrix}\1_{_{[0,\frac{1}{4}]\cup[\frac{1}{2},\frac{3}{4}]}}(n\cdot\mathrm{m}_{1}\!\!\!\mod1) & 0\\
0 & \1_{_{[0,\frac{1}{4}]\cup[\frac{3}{4},1]}}(n\cdot\mathrm{m}_{1}\!\!\!\mod1)
\end{pmatrix}\right.\right.\\
+\left.\left.\partial_{0,\nu}^{-1}\begin{pmatrix}\1_{_{[\frac{1}{4},\frac{1}{2}]\cup[\frac{3}{4},1]}}(n\cdot\mathrm{m}_{1}\!\!\!\mod1) & 0\\
0 & \1_{_{[\frac{1}{4},\frac{1}{2}]\cup[\frac{1}{2},\frac{3}{4}]}}(n\cdot\mathrm{m}_{1}\!\!\!\mod1)
\end{pmatrix}\right.\right)\\
\left.+\begin{pmatrix}0 & \partial_{1}\\
\partial_{1,c} & 0
\end{pmatrix}\right)\begin{pmatrix}u_{n}\\
v_{n}
\end{pmatrix}=\begin{pmatrix}f\\
g
\end{pmatrix}
\end{multline*}
 converge weakly to the solution of 
\[
\left(\partial_{0,\nu}\left(1+\kappa*\right)\left(\begin{pmatrix}\frac{1}{2} & 0\\
0 & \frac{1}{2}
\end{pmatrix}+\partial_{0,\nu}^{-1}\begin{pmatrix}\frac{1}{2} & 0\\
0 & \frac{1}{2}
\end{pmatrix}\right)+\begin{pmatrix}0 & \partial_{1}\\
\partial_{1,c} & 0
\end{pmatrix}\right)\begin{pmatrix}u\\
v
\end{pmatrix}=\begin{pmatrix}f\\
g
\end{pmatrix}.
\]
 The latter considerations dealt with time-translation invariant coefficients.
We shall also treat another example, where time-translation invariance
is not warranted. For this take a sequence of Lipschitz continuous
functions $(N_{n}\colon\mathbb{R}\to\mathbb{R})_{n}$ with uniformly
bounded Lipschitz semi-norm and such that $(N_{n})_{n}$ converges
point-wise almost everywhere to some function $N\colon\mathbb{R}\to\mathbb{R}$.
Moreover, assume that there exists $c>0$ such that $\frac{1}{c}\geq N_{n}\geq c$
for all $n\in\mathbb{N}$. Then, by Lebesgue's dominated convergence
theorem $N_{n}(\mathrm{m}_{0})\to N(\mathrm{m}_{0})$ in the strong
operator topology, where we anticipated that $N_{n}(\mathrm{m}_{0})$
acts as a multiplication operator with respect to the temporal variable.
The strict monotonicity in the above truncated sense of 
\begin{multline*}
\partial_{0,\nu}\left(N_{n}(\mathrm{m}_{0})\begin{pmatrix}\1_{_{[0,\frac{1}{4}]\cup[\frac{1}{2},\frac{3}{4}]}}(n\cdot\mathrm{m}_{1}\!\!\!\mod1) & 0\\
0 & \1_{_{[0,\frac{1}{4}]\cup[\frac{3}{4},1]}}(n\cdot\mathrm{m}_{1}\!\!\!\mod1)
\end{pmatrix}\right.\\
+\left.\partial_{0,\nu}^{-1}\begin{pmatrix}\1_{_{[\frac{1}{4},\frac{1}{2}]\cup[\frac{3}{4},1]}}(n\cdot\mathrm{m}_{1}\!\!\!\mod1) & 0\\
0 & \1_{_{[\frac{1}{4},\frac{1}{2}]\cup[\frac{1}{2},\frac{3}{4}]}}(n\cdot\mathrm{m}_{1}\!\!\!\mod1)
\end{pmatrix}\right)
\end{multline*}
 is easily seen using integration by parts, see e.g. \cite[Lemma 2.6]{Picard2013_nonauto}.
Our main convergence theorem now yields that the solutions of 
\begin{multline*}
\left(\partial_{0,\nu}\left(N_{n}(\mathrm{m}_{0})\begin{pmatrix}\1_{_{[0,\frac{1}{4}]\cup[\frac{1}{2},\frac{3}{4}]}}(n\cdot\mathrm{m}_{1}\!\!\!\mod1) & 0\\
0 & \1_{_{[0,\frac{1}{4}]\cup[\frac{3}{4},1]}}(n\cdot\mathrm{m}_{1}\!\!\!\mod1)
\end{pmatrix}\right.\right.\\
+\left.\left.\partial_{0,\nu}^{-1}\begin{pmatrix}\1_{_{[\frac{1}{4},\frac{1}{2}]\cup[\frac{3}{4},1]}}(n\cdot\mathrm{m}_{1}\!\!\!\mod1) & 0\\
0 & \1_{_{[\frac{1}{4},\frac{1}{2}]\cup[\frac{1}{2},\frac{3}{4}]}}(n\cdot\mathrm{m}_{1}\!\!\!\mod1)
\end{pmatrix}\right.\right)\\
\left.+\begin{pmatrix}0 & \partial_{1}\\
\partial_{1,c} & 0
\end{pmatrix}\right)\begin{pmatrix}u_{n}\\
v_{n}
\end{pmatrix}=\begin{pmatrix}f\\
g
\end{pmatrix}
\end{multline*}
 converge weakly to the solution of 
\[
\left(\partial_{0,\nu}\left(N(\mathrm{m}_{0})\begin{pmatrix}\frac{1}{2} & 0\\
0 & \frac{1}{2}
\end{pmatrix}+\partial_{0,\nu}^{-1}\begin{pmatrix}\frac{1}{2} & 0\\
0 & \frac{1}{2}
\end{pmatrix}\right)+\begin{pmatrix}0 & \partial_{1}\\
\partial_{1,c} & 0
\end{pmatrix}\right)\begin{pmatrix}u\\
v
\end{pmatrix}=\begin{pmatrix}f\\
g
\end{pmatrix}.
\]
 
\end{example}

\section{Nonlinear Monotone Evolutionary Problems\label{sec:Monotone-Evolutionary-Problems}}

This last section is devoted to the generalization of the well-posedness
results of the previous sections to a particular case of non-linear
problems. Instead of considering differential equations we turn our
attention to the study of differential inclusions. As in the previous
section, we begin to consider the autonomous case and present the
well-posedness result.

\subsection{The autonomous case\label{sub:incl_auto}}

Let $\nu>0$. The problem class under consideration is given as follows
\begin{equation}
(u,f)\in\partial_{0,\nu}M\left(\partial_{0,\nu}^{-1}\right)+A,\label{eq:evol_incl}
\end{equation}
where $M(\partial_{0,\nu}^{-1})$ is again a linear material law,
arising from an analytic and bounded function $M:B_{\mathbb{C}}\left(r,r\right)\to L(H)$
for some $r>\frac{1}{2\nu}$, $f\in L_{\nu}^{2}(\mathbb{R},H)$ is
a given right-hand side and $u\in L_{\nu}^{2}(\mathbb{R},H)$ is to
be determined. In contrast to the above problems, $A\subseteq L_{\nu}^{2}(\mathbb{R},H)\oplus L_{\nu}^{2}(\mathbb{R},H)$
is now a maximal monotone relation, which in particular need not to be linear.
By this lack of linearity we cannot argue as in the previous section,
where the maximal monotonicity of the operators were shown by proving
the strict monotonicity of their adjoints (in other words, we cannot
apply Corollary \ref{lin:op:theo}). Thus, the maximal monotonicity
has to be shown by employing other techniques and the key tools are
perturbation results for maximal monotone operators. 

In the autonomous case, our hypotheses read as follows: 

\begin{hyp} We say that $A$ satisfies the hypotheses (H1) and (H2)
respectively, if 

\begin{enumerate}[(H1)]

\item $A$ is maximal monotone and \emph{translation-invariant},
i.e. for every $h\in\mathbb{R}$ and $(u,v)\in A$ we have $\left(u(\cdot+h),v(\cdot+h)\right)\in A.$

\item for all $(u,v),(x,y)\in A$ the estimate $\intop_{-\infty}^{0}\Re\langle u(t)-x(t)|v(t)-y(t)\rangle e^{-2\nu t}\mbox{d}t\geq0$
holds.

\end{enumerate}

\end{hyp}

Assuming the standard assumption (\ref{eq:pos_material_law}) for
the function $M$, the operator $\partial_{0,\nu}M\left(\partial_{0,\nu}^{-1}\right)-c$
is maximal monotone on $L_{\nu}^{2}(\mathbb{R},H).$ Thus, the well-posedness
of (\ref{eq:evol_incl}) just relies on the maximal monotonicity of
the sum of $\partial_{0,\nu}M\left(\partial_{0,\nu}^{-1}\right)-c$
and $A$. Since $A$ is assumed to be maximal monotone, we can apply
well-known perturbation results in the theory of maximal monotone
operators to prove that $\overline{\partial_{0,\nu}M\left(\partial_{0,\nu}^{-1}\right)+A-c}$
is indeed maximal monotone, which in particular yields that 
\[
\left(\overline{\partial_{0,\nu}M\left(\partial_{0,\nu}^{-1}\right)+A}\right)^{-1}
\]
 is a Lipschitz-continuous mapping on $L_{\nu}^{2}(\mathbb{R},H)$
(see Theorem \ref{thm:Minty}). Moreover, using hypothesis (H2) we
can prove the causality of the corresponding solution operator $\left(\overline{\partial_{0,\nu}M\left(\partial_{0,\nu}^{-1}\right)+A}\right)^{-1}.$
The well-posedness result reads as follows:
\begin{thm}[Well-posedness of autonomous evolutionary inclusions, \cite{Trostorff2012_nonlin_bd}]
\label{thm:well_posedness} Let $H$ be a Hilbert space, $M:B_{\mathbb{C}}\left(\frac{1}{2\nu_{0}},\frac{1}{2\nu_{0}}\right)\to L(H)$
a linear material law for some $\nu_{0}>0$ satisfying (\ref{eq:pos_material_law}).
Let $\nu>\nu_{0}$ and $A\subseteq L_{\nu}^{2}(\mathbb{R},H)\oplus L_{\nu}^{2}(\mathbb{R},H)$
a relation satisfying (H1). Then for each $f\in L_{\nu}^{2}(\mathbb{R},H)$
there exists a unique $u\in L_{\nu}^{2}(\mathbb{R},H)$ such that
\begin{equation}
(u,f)\in\overline{\partial_{0,\nu}M\left(\partial_{0,\nu}^{-1}\right)+A}.\label{eq:evol_inc_closure}
\end{equation}
Moreover, $\left(\overline{\partial_{0,\nu}M\left(\partial_{0,\nu}^{-1}\right)+A}\right)^{-1}$
is Lipschitz-continuous with a Lipschitz constant less than or equal
to $\frac{1}{c}.$ If in addition $A$ satisfies (H2), then the solution
operator $\left(\overline{\partial_{0,\nu}M\left(\partial_{0,\nu}^{-1}\right)+A}\right)^{-1}$
is causal.
\end{thm}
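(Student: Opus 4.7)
The plan is to reduce well-posedness to Minty's theorem (Theorem \ref{thm:Minty}) by showing that $\overline{\partial_{0,\nu}M(\partial_{0,\nu}^{-1})+A}-c$ is maximal monotone on $L_\nu^2(\mathbb{R},H)$. Write $B\coloneqq\partial_{0,\nu}M(\partial_{0,\nu}^{-1})$ for brevity. Using the Fourier-Laplace representation $B=\mathcal{L}_\nu^*(\ii\mathrm{m}+\nu)M(1/(\ii\mathrm{m}+\nu))\mathcal{L}_\nu$, the spectral values of $\partial_{0,\nu}^{-1}$ lie on the circle $\partial B(1/(2\nu),1/(2\nu))\subseteq B(r,r)$, so that assumption (\ref{eq:pos_material_law}) translates via Plancherel into the estimate $\Re\langle u|Bu\rangle\geq c\|u\|^2$ on $D(\partial_{0,\nu})$. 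The analogous estimate for $B^{*}$ (which has the same form with $M(\bar z)^{*}$ in place of $M(z)$) follows the same way, so $\overline{B}-c$ is a linear, densely defined, maximal monotone operator, exactly as in the sketch of Theorem \ref{thm:sol_theo_skew}.

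To incorporate the nonlinear relation $A$, I would invoke a perturbation theorem from the theory of maximal monotone operators, see e.g.\ \cite{Brezis1971}. The standard route is to replace $A$ by its Yosida regularization $A_\epsilon$, which is maximal monotone, Lipschitz continuous, and everywhere defined. Then $\overline{B}+A_\epsilon-c$ is a linear maximal monotone operator perturbed by a Lipschitz monotone map, hence itself maximal monotone (its linearization is covered by Corollary \ref{lin:op:theo}). Minty yields solutions $u_\epsilon$ of $Bu_\epsilon+A_\epsilon u_\epsilon\ni f$ with uniform bounds provided by the gap $c$, and a weak-compactness plus demiclosedness argument for $A$ lets one pass $\epsilon\to 0$ and verify the range condition $R(I+\overline{B+A-c})=L_\nu^2(\mathbb{R},H)$, which together with monotonicity upgrades closedness to maximality. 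Minty applied to this closure then delivers the Lipschitz inverse with constant at most $1/c$.

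For the causality statement under (H2), I would test the difference $(u_1-u_2,f_1-f_2)\in\overline{B+A}$ against $\1_{\mathbb{R}_{\leq a}}(\mathrm{m}_0)(u_1-u_2)$, reducing to $a=0$ by translation invariance (H1) and autonomy of $B$. A half-line version of the Plancherel estimate (reflecting causality of $B$ itself, which is guaranteed by the representation theorem, Theorem \ref{thm:repr}) gives
\[
\Re\langle \1_{\mathbb{R}_{\leq 0}}(\mathrm{m}_0)(u_1-u_2)|B(u_1-u_2)\rangle \geq c\,\|\1_{\mathbb{R}_{\leq 0}}(\mathrm{m}_0)(u_1-u_2)\|^2,
\]
while (H2) bounds the corresponding contribution of $A$ from below by $0$. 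Summing forces $\1_{\mathbb{R}_{\leq 0}}(\mathrm{m}_0)(f_1-f_2)=0\Rightarrow\1_{\mathbb{R}_{\leq 0}}(\mathrm{m}_0)(u_1-u_2)=0$, which is the claimed causality.

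The principal obstacle is the perturbation step: the sum of two maximal monotone relations is notoriously not automatically maximal monotone, and the Yosida limit requires careful control of the unbounded linear summand $\overline{B}$, together with verification that the weak limit of the $u_\epsilon$ still realizes the inclusion with $A$ (this is where demiclosedness of the graph of $A$ enters). A secondary technicality in the causality argument is that elements of $D(\overline{B+A})$ need not lie in $D(B)\cap D(A)$, so the truncation test has to be preceded by a density/approximation step which itself uses the resolvents of $\overline{B+A}$.
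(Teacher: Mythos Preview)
Your proposal is correct and follows essentially the same route as the paper's outline: establish maximal monotonicity of $\partial_{0,\nu}M(\partial_{0,\nu}^{-1})-c$, then use a perturbation result for maximal monotone operators to conclude maximal monotonicity of the closed sum, invoke Minty's theorem for the Lipschitz inverse, and finally deduce causality from (H1) and (H2) by a truncation estimate. The paper itself does not detail the perturbation step but merely refers to ``well-known perturbation results''; your Yosida-approximation argument is one standard way to realize this, and your identification of the demiclosedness and density issues as the main technical points is accurate.
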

A typical example for a maximal monotone relation satisfying (H1) and (H2) is an extension
of a maximal monotone relation $A\subseteq H\oplus H$ satisfying
$(0,0)\in A.$ Indeed, if $A\subseteq H\oplus H$ is maximal monotone
and $(0,0)\in A$, we find that 
\begin{equation}
A_{\nu}\coloneqq\left\{ \left(u,v\right)\in L_{\nu}^{2}(\mathbb{R},H)\oplus L_{\nu}^{2}(\mathbb{R},H)\,|\,\left(u(t),v(t)\right)\in A\mbox{ for a.e. }t\in\mathbb{R}\right\} \label{eq:extension}
\end{equation}
is maximal monotone (see e.g. \cite[p. 31]{Morosanu1988}). Moreover,
$A_{\nu}$ obviously satisfies (H1) and (H2). 
\begin{rem}
It is possible to drop the assumption $(0,0)\in A,$ if one considers
the differential inclusion on the half-line $\mathbb{R}_{\geq0}$
instead of $\mathbb{R}$. In this case, an analogous definition of
the time derivative on the space $L_{\nu}^{2}(\mathbb{R}_{\geq0},H)$
can be given and the well-posedness of initial value problems of the
form
\begin{align*}
(u,f) & \in\left(\partial_{0,\nu}M_{0}+M_{1}+A_{\nu}\right)\\
M_{0}u(0+) & =u_{0},
\end{align*}
where $A_{\nu}$ is given as the extension of a maximal monotone relation
$A\subseteq H\oplus H$ and $M_{0},M_{1}\in L(H)$ satisfy a suitable
monotonicity constraint, can be shown similarly (see \cite{Trostorff2012_NA}).
\end{rem}
The general coupling mechanism as illustrated e.g. in \cite{Picard2009}
also works for the non-linear situation. This is illustrated in the
following example. 
\begin{example}[{\cite[Section 5.1]{Trostorff2012_NA}}]
We consider the equations of thermo-plasticity in a domain $\Omega\subseteq\mathbb{R}^{3}$,
given by
\begin{align}
M\partial_{0,\nu}^{2}u-\Dive\sigma & =f,\label{eq:displacement}\\
\rho\partial_{0,\nu}\theta-\dive\kappa\grad\theta+\tau_{0}\trace\,\Grad\partial_{0,\nu}u & =g.\label{eq:heat}
\end{align}
The functions $u\in L_{\nu}^{2}(\mathbb{R},L^{2}(\Omega)^{3})$ and
$\theta\in L_{\nu}^{2}(\mathbb{R},L^{2}(\Omega))$ are the unknowns,
standing for the displacement field of the medium and its temperature,
respectively. $f\in L_{\nu}^{2}(\mathbb{R},L^{2}(\Omega)^{3})$ and
$g\in L_{\nu}^{2}(\mathbb{R},L^{2}(\Omega))$ are given source terms.
The stress tensor $\sigma\in L_{\nu}^{2}(\mathbb{R},H_{\mathrm{sym}}(\Omega))$
is related to the strain tensor and the temperature by the following
constitutive relation, generalizing Hooke's law, 
\begin{equation}
\sigma=C(\Grad u-\varepsilon_{p})-c\trace^{\ast}\theta,\label{eq:Hooke-1}
\end{equation}

\end{example}
where $c>0$ and $C:H_{\mathrm{sym}}(\Omega)\to H_{\mathrm{sym}}(\Omega)$
is a linear, selfadjoint and strictly positive definite operator (the
elasticity tensor). The operator $\trace:H_{\mathrm{sym}}(\Omega)\to L^{2}(\Omega)$
is the usual trace for matrices and its adjoint can be computed by
$\trace^{\ast}f=\left(\begin{array}{ccc}
f & 0 & 0\\
0 & f & 0\\
0 & 0 & f
\end{array}\right)$. The function $\rho\in L^{\infty}(\Omega)$ describes the mass density
and is assumed to be real-valued and uniformly strictly positive,
$M,\kappa\in L^{\infty}(\Omega)^{3\times3}$ are assumed to be uniformly
strictly positive definite and selfadjoint and $\tau_{0}>0$ is a
real numerical parameter. The additional term $\varepsilon_{p}$ models
the inelastic strain and is related to $\sigma$ by
\begin{equation}
(\sigma,\partial_{0,\nu}\varepsilon_{p})\in\mathbb{I}\label{eq:incl}
\end{equation}

where $\mathbb{I}\subseteq H_{\mathrm{sym}}(\Omega)\oplus H_{\mathrm{sym}}(\Omega)$
is a maximal monotone relation satisfying $\trace\left[\mathbb{I}[H_{\mathrm{sym}}(\Omega)]\right]=\{0\}$,
i.e. each element in the post-set of $\mathbb{I}$ is trace-free.
If $\varepsilon_{p}=0,$ then (\ref{eq:displacement})-(\ref{eq:Hooke-1})
are exactly the equations of thermo-elasticity (see \cite[p. 420 ff.]{Picard_McGhee}).
The quasi-static case was studied in \cite{Chelminski2006} for a
particular relation $\mathbb{I}$, depending on the temperature $\theta$
under the additional assumption that the material possesses the linear
kinematic hardening property. We complete the system (\ref{eq:displacement})-(\ref{eq:incl})
by suitable boundary conditions for $u$ and $\theta$, for instance
$u,\theta=0$ on $\partial\Omega$. We set $v:=\partial_{0,\nu}u$
and $q:=\tau_{0}^{-1}c\kappa\grad_{c}\theta$. %

Following \cite[Subsection 5.1]{Trostorff2012_NA}, the system (\ref{eq:displacement})-(\ref{eq:incl})
can be written as 
\[
\left(\left(\begin{array}{c}
\theta\\
q\\
v\\
\sigma
\end{array}\right),\left(\begin{array}{c}
c\tau_{0}^{-1}f\\
0\\
F\\
0
\end{array}\right)\right)\in \partial_{0,\nu}M(\partial_{0,\nu}^{-1})+\left(\begin{array}{cccc}
0 & -\dive & 0 & 0\\
-\grad_{c} & 0 & 0 & 0\\
0 & 0 & 0 & -\Dive\\
0 & 0 & -\Grad_{c} & \mathbb{I}
\end{array}\right),
\]

where
\begin{equation*}
M(\partial_{0,\nu}^{-1})= \left(\begin{array}{cccc}
c\tau_{0}^{-1}w+\trace\, cC^{-1}c\trace^{\ast} & 0 & 0 & \trace\, cC^{-1}\\
0 & 0 & 0 & 0\\
0 & 0 & M & 0\\
C^{-1}c\trace^{\ast} & 0 & 0 & C^{-1}
\end{array}\right)
 +\partial_{0,\nu}^{-1}\left(\begin{array}{cccc}
0 & 0 & 0 & 0\\
0 & \kappa^{-1}c^{-1}\tau_{0} & 0 & 0\\
0 & 0 & 0 & 0\\
0 & 0 & 0 & 0
\end{array}\right).
\end{equation*}
Thus, we have that $M(\partial_{0,\nu}^{-1})=M_{0}+\partial_{0,\nu}^{-1}M_{1}$
with 
\begin{align*}
M_{0} & =\left(\begin{array}{cccc}
c\tau_{0}^{-1}w+\trace\, cC^{-1}c\trace^{\ast} & 0 & 0 & \trace\, cC^{-1}\\
0 & 0 & 0 & 0\\
0 & 0 & M & 0\\
C^{-1}c\trace^{\ast} & 0 & 0 & C^{-1}
\end{array}\right),\\
M_{1} & =\left(\begin{array}{cccc}
0 & 0 & 0 & 0\\
0 & \kappa^{-1}c^{-1}\tau_{0} & 0 & 0\\
0 & 0 & 0 & 0\\
0 & 0 & 0 & 0
\end{array}\right).
\end{align*}
It can easily be verified, that the material law $M(\partial_{0,\nu}^{-1})$
satisfies (\ref{eq:pos_material_law}). Thus, we only have to check
that 
\[
A\coloneqq\left(\begin{array}{cccc}
0 & -\dive & 0 & 0\\
-\grad_{c} & 0 & 0 & 0\\
0 & 0 & 0 & -\Dive\\
0 & 0 & -\Grad_{c} & 0
\end{array}\right)+\left(\begin{array}{cccc}
0 & 0 & 0 & 0\\
0 & 0 & 0 & 0\\
0 & 0 & 0 & 0\\
0 & 0 & 0 & \mathbb{I}
\end{array}\right)
\]
is maximal monotone (note that the other assumptions on $A$ are trivially
satisfied, since $A$ is given as in (\ref{eq:extension})). Since
$A$ is the sum of two maximal monotone operators its maximal monotonicity
can be obtained by assuming suitable boundedness constraints on $\mathbb{I}$
and applying classical perturbation results for maximal monotone operators.%
\footnote{The easiest assumption would be the boundedness of $\mathbb{I},$
i.e. for every bounded set $M$ the post-set $\mathbb{I}[M]$ is bounded.
For more advanced perturbation results we refer to \cite[p. 331 ff.]{papageogiou}.%
}

\subsection{The non-autonomous case\label{sub:The-non-autonomous-case-incl}}

We are also able to treat non-autonomous differential inclusions.
Consider the following problem
\begin{equation}
(u,f)\in\left(\partial_{0,\nu}M_{0}(\mathrm{m}_{0})+M_{1}(\mathrm{m}_{0})+A_{\nu}\right),\label{eq:non_auto_incl}
\end{equation}
where $M_{0},M_{1}\in L_{s}^{\infty}(\mathbb{R},L(H))$ and $A_{\nu}$
is the canonical extension of a maximal monotone relation $A\subseteq H\oplus H$
with $(0,0)\in A$ as defined in (\ref{eq:extension}). As in Subsection
\ref{sub:The-non-autonomous-case} we assume that $M_{0}$ satisfies
Hypotheses \ref{hyp:nonauto_linear} (\ref{selfadjoint})-(\ref{differentiable}). 

Our well-posedness result reads as follows:
\begin{thm}[Solution theory for non-autonomous evolutionary inclusions, \cite{Trostorff2013_nonautoincl}]
\label{thm:sol-theory} Let $M_{0},M_{1}\in L_{s}^{\infty}(\mathbb{R};L(H))$,
where $M_{0}$ satisfies Hypotheses \ref{hyp:nonauto_linear} (\ref{selfadjoint})-(\ref{differentiable}).
Moreover, we assume that $N(M_{0}(t))=N(M_{0}(0))$ for every $t\in\mathbb{R}$
and%
\footnote{We denote by $\iota_{R(M_{0}(0))}$ and $\iota_{N(M_{0}(0))}$ the
canonical embeddings into $H$ of $R(M_{0}(0))$ and $N(M_{0}(0))$,
respectively.%
} 
\begin{equation}
\bigvee_{c>0}\;\bigwedge_{t\in\mathbb{R}}:\iota_{R(M_{0}(0))}^{\ast}M_{0}(t)\iota_{R(M_{0}(0))}\geq c\mbox{ and }\iota_{N(M_{0}(0))}^{\ast}\Re M_{1}(t)\iota_{N(M_{0}(0))}\geq c.\label{eq:pos_def_non_auto_incl}
\end{equation}
Let $A\subseteq H\oplus H$ be a maximal monotone relation with $(0,0)\in A.$
Then there exists $\nu_{0}>0$ such that for every $\nu\geq\nu_{0}$
\[
\left(\overline{\partial_{0,\nu}M_{0}(\mathrm{m}_{0})+M_{1}(\mathrm{m}_{0})+A_{\nu}}\right)^{-1}:L_{\nu}^{2}(\mathbb{R},H)\to L_{\nu}^{2}(\mathbb{R},H)
\]
is a Lipschitz-continuous, causal mapping. Moreover, the mapping is
independent of $\nu$ in the sense that, for $\nu,\nu'\geq\nu_{0}$
and $f\in L_{\nu'}^{2}(\mathbb{R},H)\cap L_{\nu}^{2}(\mathbb{R},H)$
we have that 
\[
\left(\overline{\partial_{0,\nu'}M_{0}(\mathrm{m}_{0})+M_{1}(\mathrm{m}_{0})+A_{\nu'}}\right)^{-1}(f)=\left(\overline{\partial_{0,\nu}M_{0}(\mathrm{m}_{0})+M_{1}(\mathrm{m}_{0})+A_{\nu}}\right)^{-1}(f).
\]

\end{thm}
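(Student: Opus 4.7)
The plan is to mirror the strategy of Theorem \ref{thm:well_posedness} from the autonomous nonlinear setting, but built on top of the non-autonomous linear machinery provided by Theorems \ref{thm:Solutiontheory} and \ref{thm:non-auto2}. The first task is to upgrade the partial positivity in (\ref{eq:pos_def_non_auto_incl}) into a genuine pointwise coercivity estimate for the linear part. Since $M_{0}(t)$ is selfadjoint with $N(M_{0}(t))=N(M_{0}(0))$ for every $t$, the orthogonal projections $P_{R}:=\iota_{R(M_{0}(0))}\iota_{R(M_{0}(0))}^{\ast}$ and $P_{N}:=\iota_{N(M_{0}(0))}\iota_{N(M_{0}(0))}^{\ast}$ are $t$-independent and commute with $M_{0}(t)$, so that $M_{0}(t)=P_{R}M_{0}(t)P_{R}$ and therefore also $\dot{M}_{0}(t)=P_{R}\dot{M}_{0}(t)P_{R}$. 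Splitting $x=P_{R}x+P_{N}x$, using hypothesis (\ref{eq:pos_def_non_auto_incl}) on the two diagonal blocks, and absorbing the off-diagonal contributions of $M_{1}$ via Young's inequality, I would establish the existence of $\nu_{0},c_{0}>0$ such that for every $\nu\geq\nu_{0}$, $t\in\mathbb{R}$, $x\in H$,
\[
\nu\langle x|M_{0}(t)x\rangle+\tfrac{1}{2}\langle x|\dot{M}_{0}(t)x\rangle+\Re\langle x|M_{1}(t)x\rangle\geq c_{0}|x|^{2}.
\]
This is the analogue of (\ref{eq:pos_def}) in the present, possibly degenerate setting.

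Integration-by-parts as in the proof of Theorem \ref{thm:Solutiontheory} then carries this pointwise estimate to $L_{\nu}^{2}(\mathbb{R},H)$, giving strict monotonicity of $\partial_{0,\nu}M_{0}(\mathrm{m}_{0})+M_{1}(\mathrm{m}_{0})$ on $D(\partial_{0,\nu})$ with constant $c_{0}$ and, by inserting the cut-off $\1_{\mathbb{R}_{\leq a}}(\mathrm{m}_{0})$, also the truncated positivity required in Theorem \ref{thm:non-auto2}. Applied with $\mathcal{A}=0$, Theorem \ref{thm:non-auto2} yields that $L:=\overline{\partial_{0,\nu}M_{0}(\mathrm{m}_{0})+M_{1}(\mathrm{m}_{0})}$ is maximal $c_{0}$-strictly monotone, boundedly invertible, and its inverse is causal in $L_{\nu}^{2}(\mathbb{R},H)$.

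Next I would incorporate the nonlinearity via Yosida approximation. For $\lambda>0$ the Yosida approximation $A^{\lambda}\colon H\to H$ is Lipschitz, monotone, and satisfies $A^{\lambda}(0)=0$ by $(0,0)\in A$; thus its canonical extension $A_{\nu}^{\lambda}$ as in (\ref{eq:extension}) is a monotone Lipschitz causal map on all of $L_{\nu}^{2}(\mathbb{R},H)$. Being a bounded, everywhere-defined, continuous monotone perturbation of the maximal $c_{0}$-strictly monotone operator $L$, the sum $L+A_{\nu}^{\lambda}$ is again maximal $c_{0}$-strictly monotone. Theorem \ref{thm:Minty} then supplies a Lipschitz inverse $S_{\lambda}:=(L+A_{\nu}^{\lambda})^{-1}$ on $L_{\nu}^{2}(\mathbb{R},H)$ with Lipschitz constant $1/c_{0}$ \emph{uniform} in $\lambda$, and $S_{\lambda}$ is causal because each of $L^{-1}$ and $A_{\nu}^{\lambda}$ is.

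The final step is to pass to the limit $\lambda\to 0$. For $f\in L_{\nu}^{2}(\mathbb{R},H)$ fixed, set $u_{\lambda}:=S_{\lambda}f$; the uniform Lipschitz bound gives $\|u_{\lambda}\|\leq\|f\|/c_{0}$. Exploiting the resolvent identity $u_{\lambda}-J_{\lambda}u_{\lambda}=\lambda A^{\lambda}u_{\lambda}$ and the monotonicity of $A$ at the pointwise level, together with the $c_{0}$-strict monotonicity of $L$, one obtains a Cauchy estimate $\|u_{\lambda}-u_{\mu}\|\to 0$ as $\lambda,\mu\to 0$; the limits $u=\lim_{\lambda\to 0}u_{\lambda}$ and $v=\lim_{\lambda\to 0}A_{\nu}^{\lambda}u_{\lambda}$ satisfy $(u,v)\in A_{\nu}$ by closedness of the graph and $(u,f-v)\in L$, so that $(u,f)$ lies in $\overline{\partial_{0,\nu}M_{0}(\mathrm{m}_{0})+M_{1}(\mathrm{m}_{0})+A_{\nu}}$. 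Uniqueness, the Lipschitz estimate $1/c$, and causality descend from $S_{\lambda}$, and $\nu$-independence follows from causality by a standard cut-off argument analogous to the one used in Theorem \ref{thm:sol_theo_skew}. The main obstacle is the Cauchy argument in Step~4: since the Lipschitz constant of $A^{\lambda}$ blows up like $1/\lambda$, one cannot argue by contraction and must instead balance the Yosida resolvent identity against the \emph{uniform} coercivity $c_{0}$ of the linear part—precisely the mechanism already deployed in the proof of Theorem \ref{thm:well_posedness}.
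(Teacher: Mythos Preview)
The paper does not prove this theorem; it is cited from \cite{Trostorff2013_nonautoincl} with only the remark that ``perturbation results, which are the key tools for proving the well-posedness of (\ref{eq:non_auto_incl})'' require the $t$-independence of $N(M_{0}(t))$. Your sketch is aligned with that hint: you first recover the full pointwise coercivity (\ref{eq:pos_def}) from the split condition (\ref{eq:pos_def_non_auto_incl}) and then attempt to add $A_{\nu}$ to the linear part by a Yosida-type perturbation. Step~1 is correct and is indeed where the constant null-space hypothesis enters (so that $P_{R},P_{N}$ are $t$-independent and $M_{0}(t)=P_{R}M_{0}(t)P_{R}$, $\dot{M}_{0}(t)=P_{R}\dot{M}_{0}(t)P_{R}$); Steps~2--3 are routine applications of Theorems \ref{thm:Solutiontheory} and \ref{thm:non-auto2}.

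The genuine gap is in Step~5. The Cauchy estimate you describe, based on the resolvent identity $u_{\lambda}-J_{\lambda}u_{\lambda}=\lambda A^{\lambda}u_{\lambda}$ and monotonicity of $A$, leads to a bound of the form $c_{0}\|u_{\lambda}-u_{\mu}\|^{2}\leq(\lambda+\mu)\|A^{\lambda}_{\nu}u_{\lambda}\|\,\|A^{\mu}_{\nu}u_{\mu}\|$, and therefore \emph{requires} a $\lambda$-uniform bound on $\|A^{\lambda}_{\nu}u_{\lambda}\|$ (equivalently on $\|Lu_{\lambda}\|$). Strict monotonicity of $L$ alone does not give this, since $L$ is unbounded; this is not a step that can be absorbed into ``the mechanism already deployed in the proof of Theorem~\ref{thm:well_posedness}'' without further argument. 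In the cited reference one first obtains the bound (and hence existence) for $f$ in a suitable dense subclass and then extends by the uniform Lipschitz estimate, or else invokes an abstract perturbation theorem for sums of maximal monotone operators directly---this is precisely what the paper points to, and it is where the structural hypotheses on $M_{0}$ are used beyond mere coercivity. A minor additional point: causality of $S_{\lambda}=(L+A^{\lambda}_{\nu})^{-1}$ does not follow from ``$L^{-1}$ and $A^{\lambda}_{\nu}$ are causal'', since $S_{\lambda}$ is the inverse of a sum, not a composition; you need the truncated positivity of $L$ together with the fact that $A^{\lambda}_{\nu}$ acts pointwise in time and hence commutes with $\chi_{\mathbb{R}_{\leq a}}(\mathrm{m}_{0})$.
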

Note that in Subsection \ref{sub:The-non-autonomous-case} we do not
require that $N(M_{0}(t))$ is $t$-independent. However, in order
to apply perturbation results, which are the key tools for proving
the well-posedness of (\ref{eq:non_auto_incl}), we need to impose
this additional constraint (compare \cite[Theorem 2.19]{Picard2013_nonauto}).

\subsection{Problems with non-linear boundary conditions\label{sub:Problems-with-non-linear-bdy}}

As we have seen in Subsection \ref{sub:incl_auto} the maximal monotonicity
of the relation $A\subseteq L_{\nu}^{2}(\mathbb{R},H)\oplus L_{\nu}^{2}(\mathbb{R},H)$
plays a crucial role for the well-posedness of the corresponding evolutionary
problem (\ref{eq:evol_incl}). Motivated by several examples from
mathematical physics, we might restrict our attention to (possibly
non-linear) operators $A:D(A)\subseteq L_{\nu}^{2}(\mathbb{R},H)\to L_{\nu}^{2}(\mathbb{R},H)$
of a certain block structure. As a motivating example, we consider
the wave equation with impedance-type boundary conditions, which was
originally treated in \cite{Picard2012_Impedance}. 
\begin{example}
\label{ex:impedance}Let $\Omega\subseteq\mathbb{R}^{n}$ be open
and consider the following boundary value problem 
\begin{align}
\partial_{0,\nu}^{2}u-\dive\grad u & =f\mbox{ on }\Omega,\label{eq:wave}\\
\left(\partial_{0,\nu}^{2}a(\mathrm{m})u+\grad u\right)\cdot N & =0\mbox{ on }\partial\Omega,\label{eq:impedance_bd}
\end{align}
where $N$ denotes the outward normal vector field on $\partial\Omega$
and $a\in L^{\infty}(\Omega)^{n}$ such that $\dive a\in L^{\infty}(\Omega)$%
\footnote{Here we mean the divergence in the distributional sense.%
}. Formulating (\ref{eq:wave}) as a first order system we obtain 
\[
\partial_{0,\nu}\left(\begin{array}{c}
v\\
q
\end{array}\right)+\left(\begin{array}{cc}
0 & \dive\\
\grad & 0
\end{array}\right)\left(\begin{array}{c}
v\\
q
\end{array}\right)=\left(\begin{array}{c}
f\\
0
\end{array}\right),
\]
where $v\coloneqq\partial_{0,\nu}u$ and $q\coloneqq-\grad u.$ The
boundary condition (\ref{eq:impedance_bd}) then reads as 
\[
\left(\partial_{0,\nu}a(\mathrm{m})v-q\right)\cdot N=0\mbox{ on }\partial\Omega.
\]
The latter condition can be reformulated as 
\[
a(\mathrm{m})v-\partial_{0,\nu}^{-1}q\in D(\dive_{c}),
\]
where $\dive_{c}$ is defined as in Definition \ref{def:div_grad}.
Thus, we end up with a problem of the form 
\[
\left(\partial_{0,\nu}+A\right)\left(\begin{array}{c}
v\\
q
\end{array}\right)=\left(\begin{array}{c}
f\\
0
\end{array}\right),
\]
where $A\subseteq\left(\begin{array}{cc}
0 & \dive\\
\grad & 0
\end{array}\right)$ with $D(A)\coloneqq\left\{ (v,q)\in D(\grad)\times D(\dive)\,|\, a(\mathrm{m})v-\partial_{0,\nu}^{-1}q\in D(\dive_{c})\right\} $.
In order to apply the solution theory, we have to ensure that the
operator $A$, defined in that way, is maximal monotone as an operator
in $L_{\nu}^{2}(\mathbb{R},L^2(\Omega)\oplus L^{2}(\Omega)^{n})$.\end{example}
\begin{rem}
In \cite{Picard2012_Impedance} a more abstract version of Example
\ref{ex:impedance} was studied, where the vector field $a$ was replaced
by a suitable material law operator $a(\partial_{0,\nu}^{-1})$ as
it is defined in Subsection \ref{sub:evo_eq_auto}.
\end{rem}
Following this guiding example, we are led to consider restrictions
$A$ of block operator matrices 
\[
\left(\begin{array}{cc}
0 & D\\
G & 0
\end{array}\right),
\]
where $G:D(G)\subseteq H_{0}\to H_{1}$ and $D:D(D)\subseteq H_{1}\to H_{0}$
are densely defined closed linear operators satisfying $D^{\ast}\subseteq-G$
and consequently $G^{\ast}\subseteq-D$. We set $D_{c}\coloneqq-G^{\ast}$
and $G_{c}\coloneqq-D^{\ast}$ and obtain densely defined closed linear
restrictions of $D$ and $G$, respectively. Regarding the example
above, $G=\grad$ and $D=\dive$, whereas $G_{c}=\grad_{c}$ and $D_{c}=\dive_{c}$.
Having this guiding example in mind, we interpret $G_{c}$ and $D_{c}$
as the operators with vanishing boundary conditions and $G$ and $D$
as the operators with maximal domains. This leads to the following
definition of so-called abstract boundary data spaces.
\begin{defn}[{\cite[Subsection 5.2]{Picard2012_comprehensive_control}}]
 Let $G_{c},D_{c},G$ and $D$ as above. We define 
\begin{align*}
BD(G)\coloneqq & D(G_{c})^{\bot_{D(G)}}=N(1-DG),\\
BD(D)\coloneqq & D(D_{c})^{\bot_{D(D)}}=N(1-GD),
\end{align*}
where $D(G_{c})$ and $D(D_{c})$ are interpreted as closed subspaces
of the Hilbert spaces $D(G)$ and $D(D),$ respectively, equipped
with their corresponding graph norms. Consequently, we have the following
orthogonal decompositions 
\begin{align}
D(G) & =D(G_{c})\oplus BD(G)\label{eq:orth_decomp_bd}\\
D(D) & =D(D_{c})\oplus BD(D).\nonumber 
\end{align}
\end{defn}
\begin{rem}
The decomposition (\ref{eq:orth_decomp_bd}) could be interpreted
as follows: Each element $u$ in the domain of $G$ can be uniquely
decomposed into two elements, one with vanishing boundary values (the
component lying in $D(G_{c})$) and one carrying the information of
the boundary value of $u$ (the component lying in $BD(G)$). In the
particular case of $G=\grad$ a comparison of $BD(G)$ and the classical
trace space $H^{\frac{1}{2}}(\partial\Omega)$ can be found in \cite[Section 4]{Trostorff2013_maxmon_bd}.
\end{rem}
Let $\iota_{BD(G)}:BD(G)\to D(G)$ and $\iota_{BD(D)}:BD(D)\to D(D)$
denote the canonical embeddings. An easy computation shows that $G[BD(G)]\subseteq BD(D)$
and $D[BD(D)]\subseteq BD(G)$ and thus, we may define 
\begin{align*}
\stackrel{\bullet}{G}\coloneqq\iota_{BD(D)}^{\ast}G\iota_{BD(G)}:BD(G) & \to BD(D)\\
\stackrel{\bullet}{D}\coloneqq\iota_{BD(G)}^{\ast}D\iota_{BD(D)}:BD(D) & \to BD(G).
\end{align*}
These two operators share a surprising property.
\begin{prop}[{\cite[Theorem 5.2]{Picard2012_comprehensive_control}}]
 The operators $\stackrel{\bullet}{G}$ and $\stackrel{\bullet}{D}$
are unitary and 
\[
\left(\stackrel{\bullet}{G}\right)^{\ast}=\stackrel{\bullet}{D}\mbox{ as well as }\left(\stackrel{\bullet}{D}\right)^{\ast}=\stackrel{\bullet}{G}.
\]

\end{prop}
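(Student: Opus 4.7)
The plan is to exploit the defining identities $BD(G)=N(1-DG)$ and $BD(D)=N(1-GD)$, which say, respectively, that every $u\in BD(G)$ satisfies $Gu\in D(D)$ with $DGu=u$, and every $v\in BD(D)$ satisfies $Dv\in D(G)$ with $GDv=v$. Once this is unpacked, both unitarity and the adjoint relations will essentially fall out, with the verification of isometry via the graph norms being the only real computation.

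First I would observe that, since $G[BD(G)]\subseteq BD(D)$, the projection $\iota_{BD(D)}^{\ast}$ in the definition of $\stackrel{\bullet}{G}$ acts as the identity on $G\iota_{BD(G)}u$, so $\stackrel{\bullet}{G}u=Gu$ for $u\in BD(G)$; analogously $\stackrel{\bullet}{D}v=Dv$ for $v\in BD(D)$. Next, for $u\in BD(G)$, the identity $DGu=u$ shows that $\stackrel{\bullet}{D}\stackrel{\bullet}{G}u=DGu=u$, and symmetrically $\stackrel{\bullet}{G}\stackrel{\bullet}{D}v=v$ for $v\in BD(D)$. Hence $\stackrel{\bullet}{G}$ and $\stackrel{\bullet}{D}$ are mutually inverse bijections between $BD(G)$ and $BD(D)$.

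The main step is to verify that these maps preserve the graph-inner products on the respective boundary data spaces. For $u\in BD(G)$ one computes, using $DGu=u$,
\[
\|\stackrel{\bullet}{G}u\|_{D(D)}^{2}=\|Gu\|_{H_{1}}^{2}+\|DGu\|_{H_{0}}^{2}=\|Gu\|_{H_{1}}^{2}+\|u\|_{H_{0}}^{2}=\|u\|_{D(G)}^{2},
\]
and the polarization identity (or the same argument applied to pairs) upgrades this to preservation of the graph inner products. Thus $\stackrel{\bullet}{G}$ is an isometric bijection, hence unitary, and likewise $\stackrel{\bullet}{D}$. Finally, a unitary operator whose inverse is $\stackrel{\bullet}{D}$ must satisfy $(\stackrel{\bullet}{G})^{\ast}=(\stackrel{\bullet}{G})^{-1}=\stackrel{\bullet}{D}$, and symmetrically $(\stackrel{\bullet}{D})^{\ast}=\stackrel{\bullet}{G}$.

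The one subtlety I expect to have to address carefully is the bookkeeping around the embeddings $\iota_{BD(G)}$, $\iota_{BD(D)}$ and their adjoints with respect to graph inner products, to justify that $\stackrel{\bullet}{G}$ really acts as $u\mapsto Gu$ and not merely as the projection of $Gu$ onto $BD(D)$; but this is precisely what the inclusion $G[BD(G)]\subseteq BD(D)$ (already noted preceding the proposition) delivers. All remaining verifications are routine consequences of the characterizations of $BD(G)$ and $BD(D)$ as null-spaces of $1-DG$ and $1-GD$.
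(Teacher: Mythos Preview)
Your argument is correct. The paper itself does not supply a proof of this proposition but only cites \cite[Theorem 5.2]{Picard2012_comprehensive_control}; your proof is precisely the natural one and coincides with the standard argument behind that reference: use the null-space characterizations $BD(G)=N(1-DG)$ and $BD(D)=N(1-GD)$ together with the already-noted inclusions $G[BD(G)]\subseteq BD(D)$, $D[BD(D)]\subseteq BD(G)$ to see that $\stackrel{\bullet}{G}$ and $\stackrel{\bullet}{D}$ are mutually inverse, and then check isometry via the graph norms.
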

Coming back to our original question, when $A\subseteq\left(\begin{array}{cc}
0 & D\\
G & 0
\end{array}\right)$ defines a maximal monotone operator, we find the following characterization.
\begin{thm}[{\cite[Theorem 3.1]{Trostorff2013_maxmon_bd}}]
\label{thm:char_bd_cond} Let $G$ and $D$ be as above. A restriction
$A\subseteq\left(\begin{array}{cc}
0 & D\\
G & 0
\end{array}\right)$ is maximal monotone, if and only if there exists a maximal monotone
relation $h\subseteq BD(G)\oplus BD(G)$ such that 
\[
D(A)=\left\{ (u,v)\in D(G)\times D(D)\,|\,\left(\iota_{BD(G)}^{\ast}u,\stackrel{\bullet}{D}\iota_{BD(D)}^{\ast}v\right)\in h\right\} .
\]
\end{thm}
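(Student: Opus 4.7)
The plan is to reduce monotonicity and maximality of the operator $A$ to the corresponding properties of the boundary relation $h$ by means of a \emph{boundary form identity} of Green type. Define, for $(u,v)\in D(G)\times D(D)$, the sesqui-linear form
\[
B(u,v)\coloneqq\langle Gu|v\rangle_{H_{1}}+\langle u|Dv\rangle_{H_{0}}.
\]
The first step is to establish the identity
\[
B(u,v)=\langle\iota_{BD(G)}^{*}u\,|\,\stackrel{\bullet}{D}\iota_{BD(D)}^{*}v\rangle_{BD(G)}.
\]
This is proved by decomposing $u=u_{c}+u_{b}$ according to $D(G)=D(G_{c})\oplus BD(G)$ and $v=v_{c}+v_{b}$ according to $D(D)=D(D_{c})\oplus BD(D)$, and expanding $B(u,v)$. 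Using $G_{c}^{\ast}=-D$ and $D_{c}^{\ast}=-G$, one verifies that all mixed terms as well as $B(u_{c},v_{c})$ cancel, so that $B(u,v)=\langle Gu_{b}|v_{b}\rangle_{H_{1}}+\langle u_{b}|Dv_{b}\rangle_{H_{0}}$. Since $u_{b}\in N(1-DG)$ forces $DGu_{b}=u_{b}$, the right hand side equals the graph inner product $\langle Gu_{b}|v_{b}\rangle_{BD(D)}$, i.e.\ $\langle\stackrel{\bullet}{G}u_{b}|v_{b}\rangle_{BD(D)}$, which by unitarity of $\stackrel{\bullet}{G}$ (with adjoint $\stackrel{\bullet}{D}$) equals $\langle u_{b}|\stackrel{\bullet}{D}v_{b}\rangle_{BD(G)}$. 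This is the main technical step and arguably the crux of the argument.

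Given the identity, define $\pi\coloneqq\iota_{BD(G)}^{*}$ and $\sigma\coloneqq\stackrel{\bullet}{D}\iota_{BD(D)}^{*}$, and associate to every restriction $A\subseteq\bigl(\begin{smallmatrix}0 & D\\ G & 0\end{smallmatrix}\bigr)$ the relation $h_{A}\coloneqq\{(\pi u,\sigma v)\,|\,(u,v)\in D(A)\}\subseteq BD(G)\oplus BD(G)$. For $(u_{i},v_{i})\in D(A)$, $i=1,2$, the monotonicity expression for $A$ equals $\Re B(u_{1}-u_{2},v_{1}-v_{2})$, which by the identity is $\Re\langle\pi u_{1}-\pi u_{2}|\sigma v_{1}-\sigma v_{2}\rangle_{BD(G)}$. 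Thus $A$ is monotone if and only if $h_{A}$ is monotone. The theorem therefore amounts to showing that the correspondence $A\leftrightarrow h_{A}$ preserves maximality.

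For the implication ``$h$ maximal monotone $\Rightarrow$ $A$ maximal monotone'', suppose $((\tilde u,\tilde v),(\tilde f,\tilde g))$ satisfies the monotonicity inequality against all elements of $A$. Fixing some $(u_{0},v_{0})\in D(A)$ and letting $(u_{0}+u_{c},v_{0}+v_{c})$ range over $D(A)$ with $(u_{c},v_{c})\in D(G_{c})\times D(D_{c})$, a standard rescaling argument (replacing $(u_{c},v_{c})$ by $(\lambda u_{c},\mu v_{c})$ with $\lambda,\mu\in\mathbb{C}$ and letting $|\lambda|,|\mu|\to\infty$) forces the linear-in-$\lambda$ and linear-in-$\mu$ terms to vanish; using the adjoint relations $G_{c}^{\ast}=-D$, $D_{c}^{\ast}=-G$ this yields $\tilde v\in D(D)$ with $\tilde f=D\tilde v$ and $\tilde u\in D(G)$ with $\tilde g=G\tilde u$. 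The remaining monotonicity inequality reads $\Re\langle\pi\tilde u-\pi u|\sigma\tilde v-\sigma v\rangle_{BD(G)}\geq 0$ for every $(u,v)\in D(A)$, hence against every element of $h$. Maximality of $h$ gives $(\pi\tilde u,\sigma\tilde v)\in h$, so $(\tilde u,\tilde v)\in D(A)$.

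For the reverse implication, given a pair $(\alpha,\beta)\in BD(G)\oplus BD(G)$ monotonically related to $h$, set $\tilde u\coloneqq\iota_{BD(G)}\alpha$ and $\tilde v\coloneqq\iota_{BD(D)}\stackrel{\bullet}{G}\beta$, so that $\pi\tilde u=\alpha$ and $\sigma\tilde v=\stackrel{\bullet}{D}\stackrel{\bullet}{G}\beta=\beta$ by unitarity. For every $(u,v)\in D(A)$ the identity gives
\[
\Re\bigl[\langle\tilde u-u|D\tilde v-Dv\rangle+\langle\tilde v-v|G\tilde u-Gu\rangle\bigr]=\Re\langle\alpha-\pi u|\beta-\sigma v\rangle_{BD(G)}\geq 0,
\]
so $((\tilde u,\tilde v),(D\tilde v,G\tilde u))$ is a monotone candidate for extension of $A$. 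Maximal monotonicity of $A$ forces $(\tilde u,\tilde v)\in D(A)$ and hence $(\alpha,\beta)=(\pi\tilde u,\sigma\tilde v)\in h$, completing the proof.
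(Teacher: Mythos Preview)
The paper does not actually give a proof of this theorem; it merely states the result and cites \cite[Theorem 3.1]{Trostorff2013_maxmon_bd}. So there is nothing in the paper to compare your argument against directly.

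That said, your proof is correct and follows the natural route via the abstract Green identity
\[
\langle Gu|v\rangle_{H_{1}}+\langle u|Dv\rangle_{H_{0}}=\langle\iota_{BD(G)}^{*}u\,|\,\stackrel{\bullet}{D}\iota_{BD(D)}^{*}v\rangle_{BD(G)},
\]
which you derive cleanly from the decompositions $D(G)=D(G_{c})\oplus BD(G)$, $D(D)=D(D_{c})\oplus BD(D)$ together with $G_{c}^{*}=-D$, $D_{c}^{*}=-G$ and the unitarity of $\stackrel{\bullet}{G}$. Both maximality arguments are sound; in particular the scaling trick with $(\lambda u_{c},\mu v_{c})$ correctly identifies $\tilde f=D\tilde v$ and $\tilde g=G\tilde u$ via the adjoint relations.

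There is one small point you gloss over in the ``only if'' direction. After defining $h\coloneqq h_{A}=\{(\pi u,\sigma v):(u,v)\in D(A)\}$ and proving it is maximal monotone, you still need to check that $D(A)$ actually equals $\{(u,v)\in D(G)\times D(D):(\pi u,\sigma v)\in h_{A}\}$, not merely that it is contained in this set. This follows because maximal monotonicity of $A$ forces $D(A)+\bigl(D(G_{c})\times D(D_{c})\bigr)\subseteq D(A)$: for $(u',v')\in D(A)$ and $(u_{c},v_{c})\in D(G_{c})\times D(D_{c})$, the Green identity gives $\Re B\bigl((u'+u_{c})-u,(v'+v_{c})-v\bigr)=\Re\langle\pi u'-\pi u|\sigma v'-\sigma v\rangle\geq 0$ for every $(u,v)\in D(A)$, so maximality yields $(u'+u_{c},v'+v_{c})\in D(A)$. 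With this addition the argument is complete.
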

\begin{example}
$\,$\\
\begin{enumerate}[(a)]

\item In Example \ref{ex:impedance}, the operators $G$ and $D$
are $\grad$ and $\dive$, respectively and the relation $h\subseteq BD(\grad)\oplus BD(\grad)$
is given by 
\[
(x,y)\in h\Leftrightarrow\partial_{0,\nu}^{-1}y=\stackrel{\bullet}{\dive}\iota_{BD(\dive)}^{\ast}a(\mathrm{m})\iota_{BD(\grad)}x.
\]
Indeed, by the definition of the operator $A$ in Example \ref{ex:impedance},
a pair $(v,q)\in D(\grad)\times D(\dive)$ belongs to $D(A)$ if and
only if 
\begin{align*}
a(\mathrm{m})v-\partial_{0,\nu}^{-1}q\in D(\dive_{c}) & \Leftrightarrow\iota_{BD(\dive)}^{\ast}\left(a(\mathrm{m})v-\partial_{0,\nu}^{-1}q\right)=0\\
 & \Leftrightarrow\partial_{0,\nu}^{-1}\iota_{BD(\dive)}^{\ast}q=\iota_{BD(\dive)}^{\ast}a(\mathrm{m})\iota_{BD(\grad)}\iota_{BD(\grad)}^{\ast}v\\
 & \Leftrightarrow\partial_{0,\nu}^{-1}\stackrel{\bullet}{\dive}\iota_{BD(\dive)}^{\ast}q=\stackrel{\bullet}{\dive}\iota_{BD(\dive)}^{\ast}a(\mathrm{m})\iota_{BD(\grad)}\iota_{BD(\grad)}^{\ast}v\\
 & \Leftrightarrow\left(\iota_{BD(\grad)}^{\ast}v,\stackrel{\bullet}{\dive}\iota_{BD(\dive)}^{\ast}q\right)\in h.
\end{align*}
Thus, if we show that $h$ is maximal monotone, we get the maximal
monotonicity of $A$ by Theorem \ref{thm:char_bd_cond}. For doing
so, we have to assume that the vector field $a$ satisfies a positivity
condition of the form 
\begin{equation}
\Re\intop_{-\infty}^{0}\left(\langle\grad u|\partial_{0,\nu}a(\mathrm{m})u\rangle(t)+\langle u|\dive\partial_{0,\nu}a(\mathrm{m})u\rangle(t)\right)e^{-2\nu t}\, dt\geq0\label{eq:impedance}
\end{equation}
for all $u\in D(\partial_{0,\nu})\cap D(\grad).$ In case of a smooth
boundary, the latter can be interpreted as a constraint on the angle
between the vector field $a$ and the outward normal vector field
$N$. Indeed, condition (\ref{eq:impedance_bd}) implies the monotonicity
of $h$ and also of the adjoint of $h$ (note that here, $h$ is a
linear relation). Both facts imply the maximal monotonicity of $h$
(the proof can be found in \cite[Section 4.2]{Trostorff2012_nonlin_bd}).

\item In the theory of contact problems in elasticity we find so-called
frictional boundary conditions at the contact surfaces. These conditions
can be modeled for instance by sub-gradients of lower semi-continuous
convex functions (see e.g. \cite[Section 5]{Migorski_2009}), which
are the classical examples of maximal monotone relations%
\footnote{Note that not every maximal monotone relation can be realized as a
sub-gradient of a lower semi-continuous convex function. Indeed, sub-gradients
are precisely the cyclic monotone relations, see \cite[Theoreme 2.5]{Brezis1971}.%
}.\\
Let $\Omega\subseteq\mathbb{R}^{n}$ be a bounded domain. We recall
the equations of elasticity from Example \ref{ex:visco_elastic}
\begin{equation}
\left(\partial_{0,\nu}\left(\begin{array}{cc}
1 & 0\\
0 & C^{-1}
\end{array}\right)+\left(\begin{array}{cc}
0 & -\Dive\\
-\Grad & 0
\end{array}\right)\right)\left(\begin{array}{c}
v\\
T
\end{array}\right)=\left(\begin{array}{c}
f\\
0
\end{array}\right)\label{eq:elastic-1}
\end{equation}
and assume that the following frictional boundary condition should
hold on the boundary $\partial\Omega$ (for a treatment of boundary
conditions just holding on different parts of the boundary, we refer
to \cite{Trostorff2013_maxmon_bd}):
\begin{equation}
(v,-T\cdot N)\in g,\label{eq:frictional}
\end{equation}
where $N$ denotes the unit outward normal vector field and $g\subseteq L^{2}(\partial\Omega)^{n}\oplus L^{2}(\partial\Omega)^{n}$
is a maximal monotone relation, which, for simplicity, we assume to
be bounded. We note that in case of a smooth boundary, there exists
a continuous injection $\kappa:BD(\Grad)\to L^{2}(\partial\Omega)^{n}$
(see \cite{Trostorff2013_maxmon_bd}) and we may assume that $\kappa[BD(\Grad)]\cap[L^{2}(\partial\Omega)^{n}]g\ne\emptyset.$
Then, according to \cite[Proposition 2.6]{Trostorff2013_maxmon_bd},
the relation 
\[
\tilde{g}\coloneqq\kappa^{\ast}g\kappa=\left\{ (x,\kappa^{\ast}y)\in BD(\Grad)\times BD(\Grad)\,|\,(\kappa x,y)\in g\right\} 
\]
is maximal monotone as a relation on $BD(\Grad)$ and the boundary
condition (\ref{eq:frictional}) can be written as 
\[
(\iota_{BD(\Grad)}^{\ast}v,-\stackrel{\bullet}{\Dive}\iota_{BD(\Dive)}^{\ast}T)\in\tilde{g}.
\]
Thus, by Theorem \ref{thm:char_bd_cond}, the operator 
\begin{align*}
A & \subseteq\left(\begin{array}{cc}
0 & -\Dive\\
-\Grad & 0
\end{array}\right)\\
D(A) & \coloneqq\left\{ (v,T)\in D(\Grad)\times D(\Dive)\,|\,\left(\iota_{BD(\Grad)}^{\ast}v,-\stackrel{\bullet}{\Dive}\iota_{BD(\Dive)}^{\ast}T\right)\in\tilde{g}\right\} 
\end{align*}
is maximal monotone and hence, Theorem \ref{thm:well_posedness} is
applicable and yields the well-posedness of (\ref{eq:elastic-1})
subject to the boundary condition (\ref{eq:frictional}).

\end{enumerate}
\end{example}

\section{Conclusion}

We have illustrated that many (initial, boundary value) problems of
mathematical physics fit into the class of so-called evolutionary
problems. Having identified the particular role of the time-derivative,
we realize that many equations (or inclusions) of mathematical physics
share the same type of solution theory in an appropriate Hilbert space
setting. The class of problems accessible is widespread and goes from
standard initial boundary value problems as for the heat equation,
the wave equation or Maxwell's equations etc. to problems of mixed
type and to integro-differential-algebraic equations. We also demonstrated
first steps towards a discussion of issues like exponential stability
and continuous dependence on the coefficients in this framework. The
methods and results presented provide a general, unified approach
to numerous problems of mathematical physics.

\section{Acknowledgements}

We thank the organizers, Wolfgang Arendt, Ralph Chill and Yuri Tomilov,
of the conference ``Operator Semigroups meet Complex Analysis, Harmonic
Analysis and Mathematical Physics'' held in Herrnhut in 2013 for
organizing a wonderful conference dedicated to Charles Batty's 60th
birthday. We also thank Charles Batty for his manifold, inspiring
contributions to mathematics and of course for thus providing an excellent
reason to meet experts from all over the world in evolution equations
and related subjects.

\end{document}